\documentclass[11pt,reqno]{amsart}
\usepackage[margin=1.15in]{geometry}
\usepackage[T1]{fontenc}
\usepackage[utf8]{inputenc}
\usepackage[sc]{mathpazo}
\usepackage{amsmath,amssymb,amsthm,mathtools}
\usepackage{microtype}
\usepackage[dvipsnames]{xcolor}
\usepackage[colorlinks]{hyperref}
\hypersetup{
	colorlinks=true,
	linkcolor=blue,
	citecolor=blue,
	filecolor=blue,
	urlcolor=cyan
}
\usepackage{enumitem}
\newtheorem{theorem}{Theorem}[section]
\newtheorem{proposition}[theorem]{Proposition}
\newtheorem{lemma}[theorem]{Lemma}
\newtheorem{corollary}[theorem]{Corollary}
\newtheorem{mainthm}{Theorem}

\theoremstyle{definition}
\newtheorem{definition}[theorem]{Definition}
\newtheorem{example}[theorem]{Example}
\newtheorem{remark}[theorem]{Remark}

\newcommand{\Supp}{\operatorname{Supp}}
\newcommand{\id}{\operatorname{id}}
\newcommand{\Z}{\mathbb Z}
\newcommand{\Q}{\mathbb Q}
\newcommand{\lc}{\operatorname{lc}}
\newcommand{\degt}{\deg_t}
\usepackage[style=numeric, sorting=nyt, backend=biber, maxbibnames=99]{biblatex}
\title[Group-graded Ore extensions and graded simplicity]{Group-graded Ore extensions and graded simplicity}
\author{Yassine Ait Mohamed}
\address{D\'epartement de math\'ematiques, Universit\'e de Sherbrooke, 2500 Bd de l'Universit\'e, Sherbrooke, QC, J1K 2R1, Canada}
\email{yassine.ait.mohamed@usherbrooke.ca}
\subjclass[2020]{16S36, 16W50, 16W25, 16D30, 16S35, 17B63, 20J06}

\keywords{Ore extension, skew derivation, group-graded ring, gr-simple ring, group algebra, group cohomology, skew group ring, Poisson polynomial ring, semiclassical limit}

\begin{document}
\begin{abstract}
We study Ore extensions $R[t;\sigma,\delta]$ of rings graded by an arbitrary group $\Gamma$. The grading of $R$ extends to a grading with $t$ homogeneous of degree $\gamma$ if and only if $\sigma$ and $\delta$ are compatible with conjugation and translation by $\gamma$, and this grading is then unique. For nonabelian $\Gamma$ the degree of $t$ need not centralize the support of $R$; this twist can be removed by a change of variable when $R_\gamma$ contains a unit, but not in general. The graded forms of the classical necessary conditions for simplicity are not sufficient, as the quantized Weyl algebra shows. If $R$ is graded-simple and $\sigma$ is an automorphism, then $R[t;\sigma,\delta]$ is graded-simple if and only if it contains no homogeneous normal element that is monic of positive $t$-degree. For Ore extensions of group algebras, an affine action of the group on the line describes all graded ideals; in characteristic zero, graded simplicity is equivalent to the nonvanishing of a class in $H^1(G,k_\chi)$, and simplicity forces the group to be metabelian.
\end{abstract}
\maketitle

\section{Introduction}\label{INTRO}
Ore extensions are the noncommutative polynomial rings in one variable \cite{Ore1933NonCommutative}. They include skew polynomial rings, rings of differential operators and quantized Weyl algebras. Iterated Ore extensions provide many of the standard examples of noncommutative Noetherian rings \cite{GoodearlWarfield2004,McConnellRobson2001}. Two questions about an Ore extension $R[t;\sigma,\delta]$ recur: which structures on $R$ extend to it, and when is it simple? The second has a long history, from Amitsur's work on differential polynomial rings over simple rings \cite{Amitsur1957} and Jordan's thesis \cite{Jordan1975Thesis} to Cauchon, Lam--Leroy and \"Oinert--Richter--Silvestrov \cite{Cauchon1979,LamLeroy1992,OinertRichterSilvestrov2013}.

This paper studies both questions for gradings. Gradings of Ore extensions are usually taken over $\Z$: $\sigma$ preserves degrees and $\delta$ raises them by one \cite{ShenGuo2021}. Many natural rings, however, are graded by nonabelian groups. A group algebra $kG$ is graded by $G$, and so is every crossed product of a ring by $G$. For nonabelian gradings a new phenomenon appears. The defining relation $tr=\sigma(r)t+\delta(r)$ compares $tr$ with $\sigma(r)t$, not with $rt$. Hence $\sigma$ can absorb conjugation by the degree of $t$, and gradings appear that have no analogue over abelian groups.

The differential case $\sigma=\id$ was treated in \cite{AitMohamed2026GradedDifferential}. There the degree of $t$ must centralize the support of $R$, and in characteristic zero gr-simplicity is governed by the $\delta$-stable graded ideals of $R$ and by the outerness of $\delta$. Both statements fail for general Ore extensions; we find the correct replacements.

All rings are associative with identity and nonzero, and ring homomorphisms preserve the identity. Ideals are two-sided unless stated otherwise. The letter $k$ always denotes a field. We write $\operatorname{char}R=0$ if $n1_R\neq0$ for every integer $n\ge1$.

Let $R$ be a ring and $\sigma$ a ring endomorphism of $R$. A \emph{$\sigma$-derivation} is an additive map $\delta\colon R\to R$ such that $\delta(rs)=\delta(r)s+\sigma(r)\delta(s)$ for all $r,s\in R$. The \emph{Ore extension} $A=R[t;\sigma,\delta]$ is the free left $R$-module with basis $1,t,t^2,\ldots$, with multiplication determined by
\begin{equation}\label{O47UXRT}
    tr=\sigma(r)t+\delta(r)
    \quad (r\in R).
\end{equation}
We write $R[t;\sigma]$ when $\delta=0$ and $R[t;\delta]$ when $\sigma=\id$. For the general theory see \cite{GoodearlWarfield2004,McConnellRobson2001}.

Every $f\in A$ can be written uniquely as $f=\sum_ia_it^i$ with $a_i\in R$. If $f\neq0$, its \emph{$t$-degree} $\degt f$ is the largest $n$ with $a_n\neq0$, and $\lc(f)=a_n$ is its \emph{leading coefficient}. The $t$-degree should not be confused with the $\Gamma$-degree of a homogeneous element. For $n\ge1$, the element $f$ is \emph{monic of $t$-degree $n$} if $f=t^n+\sum_{i<n}a_it^i$. An element $f$ of a ring $S$ is \emph{normal} if $Sf=fS$; in that case $Sf$ is an ideal.

For $u\in R$, the map
\[
\delta_u(r)=ur-\sigma(r)u\quad(r\in R)
\]
is a $\sigma$-derivation. A $\sigma$-derivation is \emph{inner} if it equals $\delta_u$ for some $u\in R$, and \emph{outer} otherwise. If $\sigma$ is an automorphism, each $\sigma^j\delta\sigma^{-j}$ is again a $\sigma$-derivation (see the proof of Lemma \ref{W28KQRP}(3)). If $\sigma$ is an automorphism and $n\ge1$, we put
\[
\Delta_n=\sum_{j=0}^{n-1}\sigma^j\delta\sigma^{-j},
\]
a $\sigma$-derivation. Note that $\Delta_n=n\delta$ when $\sigma=\id$.

Throughout, $\Gamma$ is a group with identity $e$, not necessarily abelian. A \emph{$\Gamma$-graded ring} is a ring $R=\bigoplus_{\tau\in\Gamma}R_\tau$ with $R_\rho R_\tau\subseteq R_{\rho\tau}$ for all $\rho,\tau$; see \cite{NastasescuVanOystaeyen2004}. The elements of $R_\tau$ are \emph{homogeneous of degree $\tau$}, and $\Supp(R)=\{\tau\in\Gamma:R_\tau\neq0\}$ is the \emph{support} of $R$. An ideal is \emph{graded} if it is generated by homogeneous elements. A graded ring is \emph{gr-simple} if $0$ and the ring itself are its only graded ideals. 

For $\gamma\in\Gamma$ put $\iota_\gamma(\tau)=\gamma\tau\gamma^{-1}$. An additive map $f\colon R\to R$ is \emph{$\iota_\gamma$-graded} if $f(R_\tau)\subseteq R_{\iota_\gamma(\tau)}$ for all $\tau\in\Gamma$. If $\gamma$ is central, this is the usual notion of a graded map (compare \cite[Section~1.1]{Hazrat2016}).

\begin{definition}\label{D1COMP}
Let $R$ be $\Gamma$-graded and $\gamma\in\Gamma$. A pair $(\sigma,\delta)$, where $\sigma$ is a ring endomorphism of $R$ and $\delta$ a $\sigma$-derivation, is \emph{$\gamma$-compatible} if
\begin{equation}\label{B58LPXA}
    \sigma(R_\tau)\subseteq R_{\gamma\tau\gamma^{-1}}
    \quad \text{and}\quad
    \delta(R_\tau)\subseteq R_{\gamma\tau}
    \quad (\tau\in\Gamma).
\end{equation}
In that case $\delta$ is \emph{$\gamma$-inner} if $\delta=\delta_u$ for some $u\in R_\gamma$, and \emph{$\gamma$-outer} otherwise.
\end{definition}

Thus the first condition in \eqref{B58LPXA} says that $\sigma$ is $\iota_\gamma$-graded, and the second that $\delta$ is homogeneous of degree $\gamma$, acting on degrees by left translation. We show in Lemma \ref{W28KQRP}(2) that a $\gamma$-compatible $\delta$ is $\gamma$-inner if and only if it is inner.

An ideal $I$ of $R$ is \emph{$(\sigma,\delta)$-stable} if $\sigma(I)\subseteq I$ and $\delta(I)\subseteq I$. A graded ring $R$ is \emph{$(\sigma,\delta)$-gr-simple} if $0$ and $R$ are its only $(\sigma,\delta)$-stable graded ideals.

Let $G$ be a group. A \emph{character} is a group homomorphism $\chi\colon G\to k^\times$. Let $k_\chi$ denote $k$ as a $G$-module via $g\cdot\lambda=\chi(g)\lambda$; see \cite{Brown1982Cohomology} for group cohomology.

\begin{definition}\label{DFN5TQX}
A \emph{crossed homomorphism} (or \emph{$1$-cocycle}) for $\chi$ is a map $c\colon G\to k$ with
\[
 c(\tau\rho)=c(\tau)+\chi(\tau)c(\rho)\quad(\tau,\rho\in G).
\]
The crossed homomorphisms form a $k$-vector space $Z^1(G,k_\chi)$. The \emph{coboundaries} are the cocycles $x(\chi-1)$ with $x\in k$; they form a subspace $B^1(G,k_\chi)$, and
\[
 H^1(G,k_\chi)=Z^1(G,k_\chi)/B^1(G,k_\chi).
\]
For $\chi=1$ this is $H^1(G,k)$ with trivial coefficients, and $Z^1(G,k_1)=\operatorname{Hom}(G,k)$.
\end{definition}

The \emph{affine group of the line} is the group $\mathrm{Aff}(1,k)\cong k\rtimes k^\times$ of maps $X\mapsto aX+b$ with $a\in k^\times$ and $b\in k$, under composition. A group is \emph{metabelian} if it has an abelian normal subgroup with abelian quotient; every subgroup of $\mathrm{Aff}(1,k)$ is metabelian. If $G$ acts on a ring $S$ by automorphisms $\theta_\tau$, the \emph{skew group ring} $S\rtimes_\theta G=\bigoplus_{\tau\in G}Su_\tau$ has multiplication $(fu_\tau)(gu_\rho)=f\theta_\tau(g)u_{\tau\rho}$, and it is $G$-graded by $\deg u_\tau=\tau$.

A \emph{Poisson algebra} is a commutative $k$-algebra $P$ with a Lie bracket $\{-,-\}$ that is a derivation in each variable. It is \emph{$\Gamma$-graded} if $P_\rho P_\tau\subseteq P_{\rho\tau}$ and $\{P_\rho,P_\tau\}\subseteq P_{\rho\tau}$ for all $\rho,\tau$. A \emph{Poisson ideal} is an ideal $I$ with $\{P,I\}\subseteq I$, and $P$ is \emph{Poisson gr-simple} if $0$ and $P$ are its only graded Poisson ideals. An element $f\in P$ is \emph{Poisson normal} if $\{P,f\}\subseteq fP$.

A \emph{Poisson derivation} of $P$ is a derivation $\alpha$ with $\alpha(\{a,b\})=\{\alpha(a),b\}+\{a,\alpha(b)\}$. Given $\alpha$, a \emph{Poisson $\alpha$-derivation} is a derivation $\delta$ with
\[
 \delta(\{a,b\})=\{\delta(a),b\}+\{a,\delta(b)\}+\alpha(a)\delta(b)-\delta(a)\alpha(b)\quad(a,b\in P).
\]
For such a pair, Oh \cite{Oh2006} constructed the \emph{Poisson polynomial ring} $B=P[t;\alpha,\delta]_{\mathrm p}$: the unique Poisson bracket on $P[t]$ that extends the bracket of $P$ and satisfies $\{t,a\}=\alpha(a)t+\delta(a)$ (see Section \ref{P55QSCL}). For $u\in P$ put $\delta^{\mathrm p}_u(a)=\{u,a\}-u\alpha(a)$. We say that $\delta$ is \emph{$\alpha$-inner} if $\delta=\delta^{\mathrm p}_u$ for some $u\in P$. If $\Gamma$ is abelian and $\gamma\in\Gamma$, the pair $(\alpha,\delta)$ is \emph{$\gamma$-compatible} if $\alpha(P_\tau)\subseteq P_\tau$ and $\delta(P_\tau)\subseteq P_{\gamma\tau}$ for all $\tau$. Finally, $P$ is \emph{$(\alpha,\delta)$-Poisson gr-simple} if $0$ and $P$ are its only graded Poisson ideals $I$ with $\alpha(I)+\delta(I)\subseteq I$.

Let $S$ be a ring and $h\in S$ a central non-zero-divisor such that $S/hS$ is commutative. Then $S/hS$ is a Poisson algebra with bracket $\{\bar a,\bar b\}=\overline{h^{-1}(ab-ba)}$, called the \emph{semiclassical limit} of $S$.

Fix a $\Gamma$-graded ring $R$ and $\gamma\in\Gamma$. When does the grading of $R$ extend to a $\Gamma$-grading of $R[t;\sigma,\delta]$ with $\deg t=\gamma$?
In the differential case, $tr$ and $rt$ must have the same degree, so $\gamma$ commutes with every element of $\Supp(R)$ \cite[Propositions 2.1 and 2.2]{AitMohamed2026GradedDifferential}. In general $\sigma$ may absorb conjugation by $\gamma$.

\begin{mainthm}[Theorem \ref{R39XPKM}]\label{K91QZWM}
Let $\gamma\in\Gamma$. The Ore extension $A=R[t;\sigma,\delta]$ admits a $\Gamma$-grading extending the grading of $R$ with $t\in A_\gamma$ if and only if $(\sigma,\delta)$ is $\gamma$-compatible. Such a grading is unique, and
\[A_\alpha=\bigoplus_{n\ge0}R_{\alpha\gamma^{-n}}t^n\quad (\alpha\in\Gamma).\]
\end{mainthm}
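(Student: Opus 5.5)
We prove necessity and uniqueness first, then sufficiency by checking that the proposed decomposition is a grading.

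\emph{Necessity.} Suppose $A$ carries a $\Gamma$-grading extending that of $R$ with $t\in A_\gamma$. Fix $r\in R_\tau$. Then $tr\in A_{\gamma\tau}$. Since $R$ is graded inside $A$, the element $r'=\sigma(r)$ decomposes as $\sum_\rho r'_\rho$ with $r'_\rho\in R_\rho$, so $\sigma(r)t=\sum_\rho r'_\rho t$ with $r'_\rho t\in A_{\rho\gamma}$. Likewise $\delta(r)=\sum_\rho \delta(r)_\rho\in\bigoplus_\rho A_\rho$. Compare homogeneous components of
\[
tr=\sigma(r)t+\delta(r)
\]
in the direct sum $\bigoplus_\alpha A_\alpha$. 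Because $A$ is a free left $R$-module on the powers of $t$, the terms with $t$-degree $1$ and $t$-degree $0$ cannot cancel one another within a fixed $\Gamma$-degree. To make this precise, project onto the $R t$ and $R$ parts. Everything lies in $R\oplus Rt$, and $R$ and $Rt$ are graded subgroups: $R$ by hypothesis, and $Rt=\bigoplus_\rho R_\rho t$ with $R_\rho t\subseteq A_{\rho\gamma}$. Hence the sum $R+Rt$ is direct and compatible with the grading, and the component of $tr$ of degree $\alpha\neq\gamma\tau$ vanishes.

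From the $R$-component we get that $\delta(r)_\rho=0$ for $\rho\neq\gamma\tau$, that is, $\delta(R_\tau)\subseteq R_{\gamma\tau}$. From the $Rt$-component we get $r'_\rho t=0$ unless $\rho\gamma=\gamma\tau$. Since $t$ is not a zero-divisor on the right for left coefficients (freeness), this forces $r'_\rho=0$, so $\sigma(R_\tau)\subseteq R_{\gamma\tau\gamma^{-1}}$.

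The one delicate point, which we expect to be the main obstacle, is justifying that $\bigoplus_{n}R_\rho t^n$ interacts correctly with the grading of $A$. This holds because $R_\rho t^n\subseteq A_\rho A_\gamma^n\subseteq A_{\rho\gamma^n}$, so each summand is homogeneous.

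\emph{Uniqueness.} In any such grading $R_{\alpha\gamma^{-n}}t^n\subseteq A_\alpha$. Now $A=\bigoplus_n\bigoplus_\rho R_\rho t^n$ by freeness. Each $A_\alpha$ therefore contains $B_\alpha:=\bigoplus_n R_{\alpha\gamma^{-n}}t^n$, and $\sum_\alpha B_\alpha=A$. Since $A=\bigoplus_\alpha A_\alpha$ with $B_\alpha\subseteq A_\alpha$, this forces $A_\alpha=B_\alpha$. This gives both uniqueness and the displayed formula.

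\emph{Sufficiency.} Assume $\gamma$-compatibility and define $A_\alpha=B_\alpha$; this is an additive direct sum decomposition by freeness. It remains to show $A_\alpha A_\beta\subseteq A_{\alpha\beta}$, for which it suffices to check $(at^m)(bt^n)$ with $a\in R_\rho$ and $b\in R_\tau$. First show by induction on $m$ that
\[
t^m b=\sum_{i\le m} c_i t^i
\quad\text{with}\quad
c_i\in R_{\gamma^m\tau\gamma^{-i}}.
\]
The case $m=1$ holds because $\sigma(b)\in R_{\gamma\tau\gamma^{-1}}$ and $\delta(b)\in R_{\gamma\tau}$. For the inductive step, apply $t$ to a term $ct^i$ with $c\in R_\mu$: this gives $\sigma(c)t^{i+1}+\delta(c)t^i$, with degrees $\gamma\mu\gamma^{-1}\gamma^{i+1}=\gamma\mu\gamma^{i}$ and $\gamma\mu\gamma^{i}$. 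Both equal $\gamma$ times the previous total degree, as required. Hence $t^mb\in A_{\gamma^m\tau}$, and so
\[
at^mbt^n\in A_{\rho\gamma^m\tau\gamma^n}.
\]
Finally, $at^m\in A_{\rho\gamma^m}$ and $bt^n\in A_{\tau\gamma^n}$, which matches $\alpha\beta$. Also $R=A_{\ast}$ restricted to $n=0$ recovers the original grading, and $t\in A_\gamma$.
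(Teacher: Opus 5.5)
Your proof is correct and follows essentially the same route as the paper. Necessity compares homogeneous components of $tr=\sigma(r)t+\delta(r)$ and uses the $R$-linear independence of $1,t$; uniqueness uses $\bigoplus_n R_{\alpha\gamma^{-n}}t^n\subseteq A'_\alpha$ together with directness of the decomposition; sufficiency is an induction on powers of $t$, where you track the coefficient degrees of $t^mb$ explicitly while the paper proves $tA_\beta\subseteq A_{\gamma\beta}$ and iterates.
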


For abelian $\Gamma$, the conditions \eqref{B58LPXA} say that $\sigma$ preserves degrees and $\delta$ shifts them by $\gamma$. For $\Gamma=\Z$ and $\gamma=1$ they define the usual graded Ore extensions, and for $\sigma=\id$ they recover the criterion of \cite{AitMohamed2026GradedDifferential}. For nonabelian $\Gamma$ new gradings appear: $\sigma$ may move homogeneous components, and $\gamma$ need not centralize $\Supp(R)$ (Example \ref{L73VXTR}). This twist can be removed when $R_\gamma$ contains a unit (Remark \ref{N47ZXVL}(3)), but not in general (Example \ref{EKLEIN}). To our knowledge, this extension problem has not been considered before.

Next we consider gr-simplicity. If $A$ is gr-simple, then $R$ is $(\sigma,\delta)$-gr-simple and $\delta$ is outer (Proposition \ref{T64XWRP}). These are the graded forms of the standard necessary conditions for simplicity \cite[Proposition 4.6 and Remark 4.1]{OinertRichterSilvestrov2013}. For $\sigma=\id$ they are also sufficient in characteristic zero. This is due to Jordan for the trivial grading \cite{Jordan1975Thesis}, and it holds for arbitrary gradings by \cite[Theorem 1.10(2)]{AitMohamed2026GradedDifferential}. Once $\sigma\neq\id$, sufficiency fails already for the trivial grading: over a commutative domain, a simple Ore extension has $\sigma=\id$ \cite[Theorem 4.4]{OinertRichterSilvestrov2013}. The failure persists for gr-simplicity. For the quantized Weyl algebra $\Q\langle x,t\rangle/(tx-2xt-1)$, graded by $\deg x=1$ and $\deg t=-1$ and viewed as the Ore extension $\Q[x][t;\sigma,\delta]$, the ring $\Q[x]$ is $(\sigma,\delta)$-gr-simple and $\delta$ is outer, but $A$ is not gr-simple (Proposition \ref{Q95LNVK}).

The obstruction is the homogeneous normal element $1+xt=tx-xt$, the classical normal element of the quantized Weyl algebra \cite{Jordan1995QuantizedWeyl}. It exists because the commutator of $t$ with $a_nt^n+\cdots$ contains the term $(\sigma(a_n)-a_n)t^{n+1}$. So commuting with $t$ no longer lowers the $t$-degree, and the leading-coefficient argument of the differential case breaks down (Remark \ref{Z32PXMT}). Our next result shows that, over a gr-simple coefficient ring, homogeneous normal elements are the only obstruction. Normal and semi-invariant polynomials in Ore extensions have been studied \cite{ChuangTsai2009,CortesFerrero2004,Jordan2002Normal,LamLeungLeroyMatczuk1989,LeroyMatczuk1992}, and the maps $\Delta_n$ already appear in the study of normal elements \cite[Lemma 4.8]{eisenschlos20133}.

\begin{mainthm}[Theorem \ref{P73NXQT}, Corollaries \ref{U28ZVKM} and \ref{Y96QNLP}]\label{J76MXRP}
Let $(\sigma,\delta)$ be $\gamma$-compatible, and assume that $R$ is gr-simple and that $\sigma$ is an automorphism.
\begin{enumerate}[label=\textup{(\arabic*)}]
\item $A$ is gr-simple if and only if $A$ contains no homogeneous normal element that is monic of positive $t$-degree.
\item If $\Delta_n$ is outer for every $n\ge1$, then $A$ is gr-simple.
\item If $\operatorname{char}R=0$ and $\sigma\delta\sigma^{-1}-\delta$ is inner, then $A$ is gr-simple if and only if $\delta$ is outer.
\end{enumerate}
\end{mainthm}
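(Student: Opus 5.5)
Part (1) is the core, and I plan a minimal-degree argument for it, using the grading $A_\alpha=\bigoplus_n R_{\alpha\gamma^{-n}}t^n$ from Theorem~\ref{K91QZWM}.

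For ``only if'', let $f$ be homogeneous, normal and monic with $\degt f=n\ge1$. Then $Af$ is a graded ideal. It is nonzero, and it is proper: since $f$ is monic and $\sigma$ is an automorphism, every nonzero element $gf$ has leading coefficient $\lc(g)\,\sigma^{m}(1)=\lc(g)$ up to the twist, so $\degt(gf)\ge n\ge1$. In particular $1\notin Af$.

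For ``if'', let $I$ be a nonzero graded ideal of $A$ and let $n$ be the least $t$-degree of a nonzero homogeneous element of $I$. Let $J$ be the set of leading coefficients of homogeneous elements of $I$ of $t$-degree $n$, together with $0$. I will check that $J$ is closed under sums of same-degree elements and under left multiplication by homogeneous elements of $R$. It is also closed under right multiplication: for $f\in I$ and homogeneous $r$, the leading coefficient of $fr$ is $\lc(f)\sigma^n(r)$, and $\sigma^n$ is surjective since $\sigma$ is an automorphism. So the additive span of $J$ is a nonzero graded ideal of $R$, and hence equals $R$.

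Next I need $1\in J$ as the leading coefficient of a single homogeneous element. The identity $1=\sum r_i a_i s_i$ involves only degree-$e$ pieces. Since $J$ is closed under two-sided multiplication by homogeneous elements, I can arrange all terms to sit in the same $\Gamma$-degree of $A$, namely $\gamma^n$: the coefficient of $t^n$ in $A_{\gamma^n}$ lies in $R_e$. Summing gives a homogeneous $f\in I$ of degree $\gamma^n$ with $f=t^n+\cdots$.

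If $n=0$, then $f=1\in I$ and $I=A$. If $n\ge1$, I claim $f$ is normal. For homogeneous $r\in R$, the element $fr-\sigma^n(r)f$ lies in $I$, is a sum of homogeneous elements, and has $t$-degree $<n$, so by minimality it is $0$. Similarly $tf-(t+c)f$ has $t$-degree $<n$ for a suitable $c\in R$ obtained by comparing $t^{n}$-coefficients, so it is $0$. Hence $Af\subseteq fA$. For the reverse inclusion, $\sigma^{-n}$ gives $fR=Rf$, and $t$ is handled by the same relation $ft=(t-c')f$, obtained by degree comparison. Thus $f$ is a monic homogeneous normal element of positive $t$-degree, which is excluded. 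I expect this normality verification, including the homogeneity bookkeeping when $\gamma$ is not central, to be the main care point.

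For (2), I would show that a monic normal $f=t^n+a t^{n-1}+\cdots$ forces $\Delta_n$ to be inner. From $fr=\sigma^n(r)f$, expanding $t^n r$ gives the coefficient of $t^{n-1}$ as $\Delta_n(r)$ suitably twisted, namely $\sum_j \sigma^{j}\delta\sigma^{n-1-j}(r)$. Comparing gives $\Delta_n(\sigma^{n-1}r')$ expressed as $\sigma^n(r)a-a\sigma^{n-1}(r)$. After substituting $r=\sigma^{-(n-1)}r''$, this makes $\Delta_n=\delta_u$ with $u=\pm a$, up to composing with $\sigma^{\pm1}$ appropriately. The normalization should come out as $\delta_{u}$ for $\sigma$ exactly; I will need to track the indices carefully here. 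Then (2) follows from (1).

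For (3), the necessity of ``$\delta$ outer'' is Proposition~\ref{T64XWRP}. For sufficiency, write $\sigma\delta\sigma^{-1}=\delta+\delta_v$. By induction, $\sigma^j\delta\sigma^{-j}=\delta+\delta_{v_j}$, using that $\sigma\delta_u\sigma^{-1}=\delta_{\sigma(u)}$. Hence $\Delta_n=n\delta+\delta_{w}$. If $\Delta_n$ were inner, then $n\delta$ would be inner. Since $\operatorname{char}R=0$, I would still need division by $n$, and this is the weak point. I would instead go through (1): take a minimal-degree normal $f$, and use the fact that the inner-derivation identity holds with the explicit element $a$. If $n$ is not invertible in $R$, I would try to exploit gr-simplicity, which makes $R_e$ simple-ish so that $\operatorname{char}0$ gives $n$ central and regular in the centre of $R_e$, a field. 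Then $n$ is invertible, $\delta=\delta_{(u-w)/n}$ is inner, a contradiction, and (2) applies.
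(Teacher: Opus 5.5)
Your plan follows the paper's route in all three parts: the leading-coefficient ideal at minimal $t$-degree and the degree-$\gamma^n$ component for (1), the $t^{n-1}$-coefficient of $fr=\sigma^n(r)f$ for (2), and $\Delta_n=n\delta+\delta_w$ for (3). However, the step you flagged yourself in (3) rests on a false claim.

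Gr-simplicity of $R$ does not make the centre of $R_e$ a field. For example, $M_2(\Q)$, graded by $\Z/2\Z$ with the diagonal matrices in degree $0$ and the off-diagonal ones in degree $1$, is simple of characteristic zero, yet $R_e\cong\Q\times\Q$. What is true, and all you need, is that $Z(R)\cap R_e$ is a field. The element $n1_R$ is central, homogeneous of degree $e$, and nonzero because $\operatorname{char}R=0$. So $(n1_R)R$ is a nonzero graded ideal, hence equal to $R$, and $n1_R$ is a unit with central inverse. Then $\Delta_n=\delta_y$ gives $n\delta=\delta_{y-w}$, and so $\delta=\delta_{n^{-1}(y-w)}$ at once. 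The detour through a minimal-degree normal element is unnecessary.

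There are also three smaller repairs.

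\begin{enumerate}
\item In (2), nothing is left over ``up to $\sigma^{\pm1}$''. Comparing $t^{n-1}$-coefficients gives $D_n(r)+a\sigma^{n-1}(r)=\sigma^n(r)a$ with $D_n=\sum_{j=0}^{n-1}\sigma^j\delta\sigma^{n-1-j}$. Substituting $r=\sigma^{1-n}(y)$ turns this into exactly $\Delta_n(y)=\sigma(y)a-ay=\delta_{-a}(y)$.
\item Since you invoke (1) as a black box, you must also derive $fr=\sigma^n(r)f$ for an arbitrary homogeneous normal monic $f$. Write $fr=gf$, use $\degt(gf)=\degt g+n$ to get $g\in R$, and compare $t^n$-coefficients.
\item In (1), $tf-(t+c)f$ is just $-cf$. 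The element that vanishes by minimality is $tf-ft-cf$ with $c=\sigma(a)-a\in R_\gamma$. Together with $fr=\sigma^n(r)f$ and the bijectivity of $\sigma^n$, this gives $Af=fA$.
\end{enumerate}
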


Part (1) holds for every grading group and in every characteristic. Part (3) applies in particular when $\sigma\delta=\delta\sigma$. For the trivial grading, the ideals of Ore extensions over quasi-simple rings go back to Cauchon \cite{Cauchon1979}, and part (3) is close to a theorem of Chuang and Tsai on semi-invariant polynomials over prime rings \cite{ChuangTsai2009} (Remark \ref{R58CTLM}). The graded statement differs because a gr-simple ring need not be prime: the group algebra $kC_2$, with $C_2=\langle g\rangle$, is gr-simple for its natural grading, but $(1+g)(1-g)=0$.

Group algebras give explicit examples, with a complete description of the graded ideals. Let $G$ be a group, $\gamma\in G$ and $\chi\colon G\to k^\times$ a character. Grade $kG$ by $G$ and put $\sigma(\tau)=\chi(\tau)\gamma\tau\gamma^{-1}$. The $k$-linear $\sigma$-derivations compatible with $\deg t=\gamma$ are exactly the maps $\delta_c(\tau)=c(\tau)\gamma\tau$ with $c\in Z^1(G,k_\chi)$ (Proposition \ref{M18QZKN}). The cocycle identity for $c$ says precisely that
\[
 \Phi\colon G\to\mathrm{Aff}(1,k),\quad \varphi_\tau(X)=\chi(\tau)X+c(\tau),
\]
is a group homomorphism. This affine action controls the whole ideal theory.

\begin{mainthm}[Propositions \ref{M18QZKN} and \ref{P61AFMD}, Theorem \ref{T84ORBT}]\label{D53GRPA}
Let $A=kG[t;\sigma,\delta_c]$ with $\deg t=\gamma$, and let $\bar k$ be an algebraic closure of $k$.
\begin{enumerate}[label=\textup{(\arabic*)}]
\item $\delta_c$ is inner if and only if $c\in B^1(G,k_\chi)$.
\item Put $s=\gamma^{-1}t$. Then $A$ is isomorphic, as a $G$-graded ring, to the skew group ring $k[X]\rtimes G$ for the action of $G$ by the affine substitutions $\varphi_\tau^{-1}$.
\item $A$ is gr-simple if and only if $\Phi(G)$ has no finite orbit in $\bar k$, if and only if the restriction of $[c]$ to every subgroup of finite index is nonzero. If $\operatorname{char}k=0$, this holds if and only if $[c]\neq0$ in $H^1(G,k_\chi)$.
\item $A$ is simple if and only if it is gr-simple and $\Phi$ is injective. In particular, if $A$ is simple, then $G$ is metabelian.
\end{enumerate}
\end{mainthm}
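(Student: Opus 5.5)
The plan is to reduce everything to part (2). Once $A$ is identified with a skew group ring $k[X]\rtimes G$, parts (3) and (4) become statements about $G$-invariant ideals of $k[X]$ and about outer actions on a commutative domain. Part (1) is a direct computation.

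\emph{Part (1).} By Lemma \ref{W28KQRP}(2), $\delta_c$ is inner if and only if it is $\gamma$-inner, that is, $\delta_c=\delta_u$ for some $u\in (kG)_\gamma=k\gamma$. For $u=x\gamma$,
\[
\delta_u(\tau)=x\gamma\tau-\chi(\tau)\gamma\tau\gamma^{-1}x\gamma=x(1-\chi(\tau))\gamma\tau .
\]
So $\delta_c=\delta_u$ exactly when $c=(-x)(\chi-1)\in B^1(G,k_\chi)$.

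\emph{Part (2).} Put $s=\gamma^{-1}t$, which has degree $e$. Using \eqref{O47UXRT},
\[
s\tau=\gamma^{-1}\bigl(\chi(\tau)\gamma\tau\gamma^{-1}t+c(\tau)\gamma\tau\bigr)=\tau\bigl(\chi(\tau)s+c(\tau)\bigr)=\tau\,\varphi_\tau(s).
\]
Hence $\tau f(s)\tau^{-1}=(f\circ\varphi_\tau^{-1})(s)$. Since $\gamma$ is a unit, $1,s,s^2,\dots$ is again a left $kG$-basis of $A$, so $\{\tau s^n\}$ is a $k$-basis. The commutation rule rewrites this basis as $\{s^n\tau\}$. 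Therefore $A=\bigoplus_\tau k[s]\tau$, with multiplication that of $k[X]\rtimes G$ for the action $\tau\mapsto\varphi_\tau^{-1}$ (this is an action because $\Phi$ is a homomorphism). The degrees match, since $\deg s=e$ and $\deg\tau=\tau$.

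\emph{Part (3).} Each $u_\tau$ is a homogeneous unit, so a graded ideal $I$ satisfies $I_\tau=I_e u_\tau$. Moreover $I_e$ is a $G$-stable ideal of $k[X]$, and conversely every $G$-stable ideal $J$ gives the graded ideal $\bigoplus_\tau Ju_\tau$. Thus $A$ is gr-simple if and only if $k[X]$ has no nonzero proper $G$-stable ideal.

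A proper ideal $(f)$, with $f$ monic of positive degree, is $G$-stable if and only if $f\circ\varphi_\tau^{-1}\in k^\times f$ for all $\tau$. This holds if and only if the finite root set of $f$ in $\bar k$ is $\Phi(G)$-stable, which yields a finite orbit. Conversely, let $\mathcal O$ be a finite orbit. Because the $\varphi_\tau$ have coefficients in $k$, they commute with $\operatorname{Gal}(\bar k/k)$. Hence the union of the Galois conjugates of $\mathcal O$ is finite and $\Phi(G)$-stable, and the product of the corresponding minimal polynomials is a $G$-semi-invariant $f\in k[X]$.

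A point $a$ has finite orbit if and only if its stabilizer $H$ has finite index. Now $\varphi_h(a)=a$ means $c(h)=a(1-\chi(h))$, i.e.\ $c|_H$ is a coboundary over $\bar k$. This descends to $k$: if $\chi|_H\ne1$, then $a=c(h)/(1-\chi(h))$ for any $h\in H$ with $\chi(h)\neq1$, so $a\in k$; if $\chi|_H=1$, the condition says $c|_H=0$.

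In characteristic zero, $\operatorname{cor}\circ\operatorname{res}=[G:H]$ is invertible on $H^1(G,k_\chi)$. So restriction to a finite-index subgroup is injective, and the condition reduces to $[c]\ne0$.

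\emph{Part (4).} Suppose first that $N=\ker\Phi\ne1$. Then $N$ acts trivially on $k[X]$, and the ideal generated by $\{n-1:n\in N\}$ is proper, because $A$ surjects onto the nonzero ring $k[X]\rtimes(G/N)$. So simplicity forces $\Phi$ to be injective, and gr-simplicity is necessary trivially.

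Conversely, assume $A$ is gr-simple and $\Phi$ is injective. Take a nonzero ideal $I$ and an element $f=\sum a_\tau u_\tau\in I$ of minimal support size; after right multiplication by a group element we may assume $e\in\Supp f$. For $b\in k[X]$, the element $bf-fb=\sum a_\tau\bigl(b-\varphi_\tau^{-1}(b)\bigr)u_\tau\in I$ has no $e$-term, so by minimality it is zero. Since $k[X]$ is a domain, every $\tau$ in the support acts trivially, so $\tau=e$ by injectivity. Hence $I\cap k[X]$ is a nonzero $G$-stable ideal, which contains $1$ by part (3). Finally, $G\cong\Phi(G)\le\mathrm{Aff}(1,k)$ is metabelian.

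The main obstacle is the finite-orbit step in (3). One must produce a semi-invariant over $k$ rather than over $\bar k$ (the Galois-closure argument), and check that vanishing of the restricted class descends from $\bar k$ to $k$.
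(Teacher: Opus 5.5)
Your proposal is correct. Parts (1), (2) and the orbit/restriction part of (3) follow the paper's proof essentially verbatim. In particular, the paper also builds the semi-invariant as the product of the distinct minimal polynomials of the points of a finite orbit, and it descends the coboundary from $\bar k$ to $k$ exactly as you do.

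You diverge in two places.

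\begin{itemize}
\item \textbf{Characteristic-zero clause of (3).} You use $\operatorname{cor}\circ\operatorname{res}=[G:H]$ to get injectivity of restriction to finite-index subgroups. The paper instead takes a finite orbit $O\subseteq\bar k$ and averages it, $b=|O|^{-1}\sum_{x\in O}x$, to get a point fixed by all of $G$. The averaging is the concrete form of the transfer and stays inside the orbit picture. Your version isolates more conceptually why characteristic zero matters: the index is invertible in $k$.
\item \textbf{Part (4).} The paper cites \"Oinert's theorem: a skew group ring over a commutative ring is simple iff the coefficient ring is $G$-simple and maximal commutative. It then checks that, since $k[X]$ is a domain, maximal commutativity amounts to injectivity of $\Phi$. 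You prove both directions by hand.
  \begin{itemize}
  \item For necessity, you note that $A\to k[X]\rtimes(G/\ker\Phi)$ is a surjection onto a nonzero ring whose kernel contains $u_n-1$ for $n\in\ker\Phi$, so it is a nonzero proper ideal when $\ker\Phi\neq1$.
  \item For sufficiency, you use the minimal-support argument to force a nonzero element of $I\cap k[X]$.
  \end{itemize}
  This is self-contained and exhibits an explicit obstructing ideal, at the cost of reproving the special case of \"Oinert's result that is needed. The paper's route is shorter and places the statement in the general theory.
\end{itemize}

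One sentence in (3) should be corrected. Stability of the root set under $\Phi(G)$ is necessary but not sufficient for $f\circ\varphi_\tau^{-1}\in k^\times f$, because of multiplicities. For example, with $\varphi(X)=1-X$ and $f=X^2(X-1)$, the root set $\{0,1\}$ is stable, but $f\circ\varphi=-X(X-1)^2\notin k^\times f$. You only use the valid direction, so nothing breaks.

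However, the semi-invariance of the product of the distinct minimal polynomials should not be read off from root-set stability. Justify it as the paper does: each $m_x\circ\varphi_\tau$ is irreducible of degree $\deg m_x$ and vanishes at $\varphi_\tau^{-1}(x)$. Hence $m\mapsto m\circ\varphi_\tau$ permutes these minimal polynomials up to scalars. This argument also avoids any issue with inseparable minimal polynomials or ``Galois conjugates'' when $\bar k/k$ is not separable.
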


The graded ideals of $A$ are $0$ and the ideals $Af(s)$, where $f$ runs over the monic polynomials with $f\circ\varphi_\tau\in k^\times f$ for all $\tau$ (Theorem \ref{T84ORBT}(1)). In positive characteristic the cohomological criterion fails. No extension of this type over a finite group is gr-simple, although $H^1(G,k_\chi)$ can be nonzero, already for $G=C_p$ (Corollary \ref{C19FING}). For $G=S_3\cong\mathrm{Aff}(1,\mathbb F_3)$ the obstruction is carried by an orbit rather than by a fixed point (Example \ref{E35FTHR} and Remark \ref{R46ORBT}). Part (4) rests on a theorem of \"Oinert on skew group rings \cite{Oinert2009}.

Parts (3) and (4) separate gr-simplicity from simplicity. For the free group $F(a,b)$ and $\gamma=a$ one obtains the gr-simple ring
\[
 ta=q\,at,\quad tb=aba^{-1}t+ab\quad(q\in\Q\setminus\{0,1\}),
\]
which is not simple (Examples \ref{K82RTXW} and \ref{E72BSLM}). The subgroup of $\mathrm{Aff}(1,\Q)$ generated by $X\mapsto 2X$ and $X\mapsto X+1$ gives a simple one.

In these examples $t$ has the noncentral degree $a$, but the change of variable $s=\gamma^{-1}t$ moves it to $e$ (Remark \ref{N47ZXVL}(3)). A degree that cannot be moved requires $R_\gamma$ to contain no unit, as in Example \ref{L73VXTR}, which is not gr-simple, and in Example \ref{EKLEIN}, which is: there $\Gamma$ is the fundamental group of the Klein bottle, $\sigma$ is not graded, and $A$ contains the first Weyl algebra.

The same questions make sense for Poisson algebras. Let $\Gamma$ be abelian and $P$ a $\Gamma$-graded Poisson algebra, and let $B=P[t;\alpha,\delta]_{\mathrm p}$. Since $P$ is commutative, nonzero products and brackets involve only commuting degrees, and the twist $\iota_\gamma$ of Theorem \ref{K91QZWM} disappears (see the beginning of Section \ref{P55QSCL}).

\begin{mainthm}[Proposition \ref{P13GRDP}, Theorem \ref{T92PSMP}]\label{E61POIS}
Let $\gamma\in\Gamma$.
\begin{enumerate}[label=\textup{(\arabic*)}]
\item $B$ admits a grading extending that of $P$ with $t\in B_\gamma$ if and only if $(\alpha,\delta)$ is $\gamma$-compatible.
\item Assume that this holds and that $P$ is Poisson gr-simple. Then $B$ is Poisson gr-simple if and only if $B$ contains no homogeneous Poisson normal element $t^n+\sum_{i<n}b_it^i$ with $n\ge1$.
\item If moreover $\operatorname{char}k=0$, then $B$ is Poisson gr-simple if and only if $\delta$ is not $\alpha$-inner.
\end{enumerate}
\end{mainthm}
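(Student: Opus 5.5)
The plan is to treat the three parts in order, transporting the arguments for the associative case (Theorem \ref{K91QZWM} and part (1) of Theorem \ref{J76MXRP}) to the Poisson setting, and using throughout that $B=P[t]$ is free as a $P$-module on $1,t,t^2,\dots$.

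\emph{Part (1).} Since $B$ is commutative with $P\subseteq B$ graded and $t\in B_\gamma$, the grading on the underlying algebra is forced: $B_\tau=\bigoplus_{n\ge0}P_{\tau\gamma^{-n}}t^n$. This gives uniqueness, and the only remaining question is whether the bracket is homogeneous.
\begin{itemize}
\item Necessity. For $a\in P_\tau$, the element $\{t,a\}=\alpha(a)t+\delta(a)$ must lie in $B_{\gamma\tau}$. Reading off the $t^1$- and $t^0$-coefficients in the free basis gives $\alpha(a)\in P_{\gamma\tau\gamma^{-1}}=P_\tau$ (here $\Gamma$ is abelian) and $\delta(a)\in P_{\gamma\tau}$.
\item Sufficiency. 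Consider the set of pairs $(x,y)$ of homogeneous elements with $\{x,y\}\in B_{\deg x\deg y}$. By antisymmetry and the Leibniz rule in each variable, this set is closed under multiplying an entry by a homogeneous element. It contains the generating pairs $(a,b)$ by assumption, $(t,a)$ by $\gamma$-compatibility, and $(t,t)$ trivially. Hence it contains all pairs of homogeneous monomials $at^m,bt^n$, and bilinearity finishes the argument.
\end{itemize}

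\emph{Part (2).} If $f$ is a homogeneous Poisson normal element that is monic of positive $t$-degree, then $fB$ is a graded Poisson ideal. It is proper, because every nonzero element of $fB$ has $t$-degree at least $\degt f\ge1$. Conversely, let $I\neq0$ be a graded Poisson ideal, and let $n$ be the least $t$-degree of a nonzero homogeneous element of $I$.
\begin{itemize}
\item Let $J$ consist of $0$ and the leading coefficients of homogeneous elements of $I$ of $t$-degree $n$. Then $J$ is a graded ideal of $P$.
\item $J$ is also closed under brackets. For $b\in P$ homogeneous and $g\in I$ with $\lc(g)=c$, the $t^n$-coefficient of $\{b,g\}$ is $\{b,c\}-nc\alpha(b)$, because $\{b,t^n\}=-nt^{n-1}(\alpha(b)t+\delta(b))$. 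Since $c\alpha(b)=\lc(\alpha(b)g)$ lies in $J$, so does $\{b,c\}$.
\item As $P$ is Poisson gr-simple, $J=P$, so $1\in J$. Hence $I$ contains a homogeneous monic $f$ of $t$-degree $n$, necessarily in $B_{\gamma^n}$. If $n=0$ then $f=1$ and $I=B$.
\item If $n\ge1$, I would show that $f$ is Poisson normal. For homogeneous $b\in P$, the element $\{b,f\}+n\alpha(b)f$ lies in $I$, is homogeneous, and has $t$-degree $<n$, so it vanishes by minimality.
\item Similarly, $\{t,f\}-\alpha(b_{n-1})f$ vanishes, where $b_{n-1}$ is the $t^{n-1}$-coefficient of $f$; note that $\{t,f\}$ has $t$-degree $\le n$ since $\{t,t^n\}=0$.
\item Since $g\mapsto\{g,f\}$ satisfies the Leibniz rule, $\{B,f\}\subseteq fB$, a contradiction.
\end{itemize}
The main point needing care is keeping everything homogeneous, which is why I work only with homogeneous $b$ and use that $I$ is graded.

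\emph{Part (3).} Assume $\operatorname{char}k=0$.
\begin{itemize}
\item If $\delta=\delta^{\mathrm p}_u$, first replace $u$ by its $\gamma$-component. This is legitimate because $\delta$ shifts degrees by $\gamma$ and $\alpha$ preserves degrees. Then $\{t-u,a\}=\alpha(a)t+\delta(a)-\{u,a\}=\alpha(a)(t-u)$, so $t-u$ is homogeneous, Poisson normal and monic of degree $1$, and $B$ is not Poisson gr-simple by (2).
\item Conversely, let $f=t^n+bt^{n-1}+\cdots$ with $n\ge1$ be homogeneous and Poisson normal, so $\{f,a\}=fh$ for each $a\in P$. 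Comparing $t$-degrees shows $h\in P$, and comparing $t^n$-coefficients gives $h=n\alpha(a)$.
\item Comparing $t^{n-1}$-coefficients gives $n\delta(a)+\{b,a\}+(n-1)b\alpha(a)=nb\alpha(a)$, that is, $n\delta(a)=\{-b,a\}-(-b)\alpha(a)$.
\item Dividing by $n$, which is allowed in characteristic $0$, gives $\delta=\delta^{\mathrm p}_{-b/n}$, so $\delta$ is $\alpha$-inner.
\end{itemize}
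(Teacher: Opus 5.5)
Your proposal is correct and follows essentially the same route as the paper. For (1) you use the forced grading and the good-pairs argument; for (2) the ideal of $t^n$-coefficients and the vanishing of $\{a,f\}+n\alpha(a)f$ and $\{t,f\}-\alpha(b_{n-1})f$ by minimality; for (3) the normal element $t-u$ and the comparison of $t^{n-1}$-coefficients. One small fix: as you define it, $J$ (the leading coefficients of homogeneous elements) is not closed under addition, so you should take its additive span, or, as the paper does, the set of $t^n$-coefficients of all elements of $I$ of $t$-degree at most $n$; that is a graded ideal whose nonzero homogeneous elements are exactly your $J$.
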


Part (3) needs no analogue of the innerness hypothesis of Theorem \ref{J76MXRP}(3). The reason is that the bracket $\{a,t^n\}$ produces $n\delta(a)$ directly, whereas in the skew case the corresponding coefficient is $\Delta_n$. Semiclassical limits connect the two settings. The quantized Weyl algebra of Proposition \ref{Q95LNVK} lies in the graded family $zx=qxz+(q-1)$, whose semiclassical limit is the Poisson polynomial ring with $\{z,x\}=xz+1$. The element $1+xz$ is normal in the family and Poisson normal in the limit (Example \ref{E58QWSL}). Moreover, if $\delta=\delta_u$ is inner, then its semiclassical limit is $\alpha$-inner (Lemma \ref{L73SCLV}).

\medskip

The paper is organized as follows. In Section \ref{D14WQZK} we prove Theorem \ref{K91QZWM}, study $\gamma$-compatible pairs and construct the nonabelian examples. Section \ref{H82TVLX} is devoted to gr-simplicity; it proves Proposition \ref{Q95LNVK} and Theorems \ref{J76MXRP} and \ref{D53GRPA}. Finally, Section \ref{P55QSCL} treats Poisson polynomial rings and semiclassical limits, and proves Theorem \ref{E61POIS}.

\section{Gradings on Ore extensions}\label{D14WQZK}

Throughout this section, $\sigma$ is a ring endomorphism of $R$, $\delta$ is a $\sigma$-derivation, and $A=R[t;\sigma,\delta]$.

\begin{theorem}\label{R39XPKM}
Let $\gamma\in\Gamma$. The following are equivalent:
\begin{enumerate}[label=\textup{(\roman*)}]
\item $A$ admits a $\Gamma$-grading $\{A_\alpha\}$ with $R_\tau\subseteq A_\tau$ for all $\tau$ and $t\in A_\gamma$.
\item $(\sigma,\delta)$ is $\gamma$-compatible.
\end{enumerate}
In that case the grading in \textup{(i)} is unique, namely
\begin{equation}\label{F65QNWT}
 A_\alpha=\bigoplus_{n\ge0}R_{\alpha\gamma^{-n}}t^n\quad (\alpha\in\Gamma).
\end{equation}
Moreover, if $\delta\neq0$, then $\gamma$ is determined by $\delta$.
\end{theorem}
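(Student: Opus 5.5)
We prove (i)$\Rightarrow$(ii) first, then uniqueness together with the formula \eqref{F65QNWT}, then (ii)$\Rightarrow$(i), and finally the statement about $\gamma$.

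\textbf{(i)$\Rightarrow$(ii).} Assume $A$ carries such a grading. Let $r\in R_\tau$. Then $tr\in A_{\gamma\tau}$, while $tr=\sigma(r)t+\delta(r)$.
\begin{itemize}[label=--]
\item Decompose $\sigma(r)=\sum_\rho \sigma(r)_\rho$ and $\delta(r)=\sum_\rho\delta(r)_\rho$ into homogeneous components of $R$.
\item Each $\sigma(r)_\rho t$ lies in $A_{\rho\gamma}$, and each $\delta(r)_\rho$ lies in $A_\rho$.
\item Since $A=\bigoplus_n Rt^n$ as left $R$-modules, the $t^1$ and $t^0$ parts can be separated. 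For this we need the homogeneous components of $A$ to respect the $t$-degree decomposition, which is not yet known.
\end{itemize}
To get around this, first show $A_\alpha\supseteq \bigoplus_n R_{\alpha\gamma^{-n}}t^n$; this is clear, since $R_\beta t^n\subseteq A_\beta A_{\gamma^n}$. The right-hand sides, summed over $\alpha$, already give all of $A=\bigoplus_{n,\beta}R_\beta t^n$. Because the $A_\alpha$ form a direct sum, the inclusion forces equality. This proves \eqref{F65QNWT} and uniqueness at once.

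With \eqref{F65QNWT} in hand, taking the $A_{\gamma\tau}$-component of $tr$ forces the following:
\begin{itemize}[label=--]
\item the coefficient of $t$ lies in $R_{\gamma\tau\gamma^{-1}}$, so $\sigma(R_\tau)\subseteq R_{\gamma\tau\gamma^{-1}}$;
\item the constant term lies in $R_{\gamma\tau}$, so $\delta(R_\tau)\subseteq R_{\gamma\tau}$.
\end{itemize}
This is exactly $\gamma$-compatibility.

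\textbf{(ii)$\Rightarrow$(i).} Define $A_\alpha$ by \eqref{F65QNWT}. Then $A=\bigoplus_\alpha A_\alpha$ is immediate, and $R_\tau\subseteq A_\tau$ and $t\in A_\gamma$ hold. The key step, and the main obstacle, is the inclusion $A_\alpha A_\beta\subseteq A_{\alpha\beta}$. It reduces to
\[
(at^m)(bt^n)\in A_{\alpha\beta}\quad\text{for } a\in R_{\alpha\gamma^{-m}},\ b\in R_{\beta\gamma^{-n}},
\]
and therefore to showing $t^m b\in A_{\gamma^m\rho}$ whenever $b\in R_\rho$. We prove this by induction on $m$. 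The case $m=1$ is the computation
\[
tb=\sigma(b)t+\delta(b),
\]
where $\sigma(b)t\in R_{\gamma\rho\gamma^{-1}}t\subseteq A_{\gamma\rho}$ and $\delta(b)\in R_{\gamma\rho}\subseteq A_{\gamma\rho}$. For the inductive step we also need $t\cdot A_\beta\subseteq A_{\gamma\beta}$. Write $t\,c\,t^k=(tc)t^k$ and apply the $m=1$ case together with the facts that right multiplication by $t^k$ sends $A_\mu$ into $A_{\mu\gamma^k}$ and left multiplication by $R_\tau$ sends $A_\mu$ into $A_{\tau\mu}$. Both facts are immediate from \eqref{F65QNWT}.

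\textbf{Determination of $\gamma$.} If $\delta\neq0$, pick a homogeneous $r\in R_\tau$ with $\delta(r)\neq0$; this is possible because $\delta$ is additive. Then $\delta(r)\in R_{\gamma\tau}\cap R_{\gamma'\tau}$ for any two admissible degrees $\gamma,\gamma'$. Since $\delta(r)\neq 0$, this intersection is nonzero, which forces $\gamma\tau=\gamma'\tau$ and hence $\gamma=\gamma'$.
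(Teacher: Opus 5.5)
Your proof is correct and follows the paper's argument. For (ii)$\Rightarrow$(i) you use the same induction through $tA_\beta\subseteq A_{\gamma\beta}$, for uniqueness the same ``inclusion plus direct sum'' argument, and to pin down $\gamma$ the same homogeneous $r$ with $\delta(r)\neq0$.

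The only difference is that you derive compatibility from the explicit formula for $A_{\gamma\tau}$, whereas the paper compares the homogeneous components of $tr=\sum_\rho s_\rho t+\sum_\rho d_\rho$ directly and uses that $1,t$ are $R$-linearly independent. Under (i), each $s_\rho t\in A_{\rho\gamma}$ and each $d_\rho\in A_\rho$ is already homogeneous, so the detour you flagged as necessary is not actually needed, though it is harmless.
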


\begin{proof}
(ii)$\Rightarrow$(i). Define $A_\alpha$ by \eqref{F65QNWT}. Since $A=\bigoplus_nRt^n$ is free over $R$ and $R=\bigoplus_\rho R_\rho$, we have $A=\bigoplus_{n,\rho}R_\rho t^n$. Placing $R_\rho t^n$ in $A_{\rho\gamma^n}$ gives $A=\bigoplus_\alpha A_\alpha$, and $1\in A_e$. It remains to prove $A_\alpha A_\beta\subseteq A_{\alpha\beta}$.

First, $R_\rho A_\beta\subseteq A_{\rho\beta}$, because $R_\rho R_{\beta\gamma^{-n}}\subseteq R_{\rho\beta\gamma^{-n}}$. Next, $tA_\beta\subseteq A_{\gamma\beta}$: for $r\in R_\rho$ and $n\ge0$ we have $rt^n\in A_{\rho\gamma^n}$, and by \eqref{O47UXRT} and \eqref{B58LPXA},
\[
 t\,rt^n=\sigma(r)t^{n+1}+\delta(r)t^n\in R_{\gamma\rho\gamma^{-1}}t^{n+1}+R_{\gamma\rho}t^n\subseteq A_{\gamma\rho\gamma^n}.
\]
By induction on $m$, $t^mA_\beta\subseteq A_{\gamma^m\beta}$ for all $m\ge0$: if it holds for $m$, then $t^{m+1}A_\beta\subseteq tA_{\gamma^m\beta}\subseteq A_{\gamma^{m+1}\beta}$. Hence $(rt^m)A_\beta\subseteq R_\rho A_{\gamma^m\beta}\subseteq A_{\rho\gamma^m\beta}$ for $r\in R_\rho$. Since $A_\alpha$ is spanned by such $rt^m$ with $\rho\gamma^m=\alpha$, the grading is multiplicative.

(i)$\Rightarrow$(ii). Let $0\neq r\in R_\tau$, and write $\sigma(r)=\sum_\rho s_\rho$ and $\delta(r)=\sum_\rho d_\rho$ with $s_\rho,d_\rho\in R_\rho$. By (i), $s_\rho t\in A_{\rho\gamma}$, $d_\rho\in A_\rho$ and $tr\in A_{\gamma\tau}$. For $\beta\neq\gamma\tau$, the component of $tr=\sum_\rho s_\rho t+\sum_\rho d_\rho$ of degree $\beta$ is $s_{\beta\gamma^{-1}}t+d_\beta$, and it vanishes. Since $1,t$ are $R$-linearly independent, $s_{\beta\gamma^{-1}}=0=d_\beta$. Hence $\sigma(r)\in R_{\gamma\tau\gamma^{-1}}$ and $\delta(r)\in R_{\gamma\tau}$.

Uniqueness. Let $\{A'_\alpha\}$ be a grading as in (i). Then $t^n\in A'_{\gamma^n}$, so $R_\rho t^n\subseteq A'_{\rho\gamma^n}$ and $A_\alpha\subseteq A'_\alpha$ for every $\alpha$. If $a\in A'_\alpha$, write $a=\sum_\beta a_\beta$ with $a_\beta\in A_\beta\subseteq A'_\beta$; uniqueness of homogeneous components in $\{A'_\beta\}$ gives $a=a_\alpha\in A_\alpha$. Finally, if $\delta\neq0$, choose a homogeneous $x\in R_\tau$ with $\delta(x)\neq0$. If $\gamma$ and $\gamma'$ both satisfy \eqref{B58LPXA}, then $0\neq\delta(x)\in R_{\gamma\tau}\cap R_{\gamma'\tau}$, so $\gamma=\gamma'$.
\end{proof}

\begin{remark}\label{N47ZXVL}
(1) For nonabelian $\Gamma$, $\sigma$ is $\iota_\gamma$-graded but need not be graded: $\sigma(R_\tau)\subseteq R_\tau$ for all $\tau$ if and only if $\gamma\tau\gamma^{-1}=\tau$ whenever $\sigma(R_\tau)\neq0$. In Examples~\ref{L73VXTR} and~\ref{K82RTXW}, $\sigma$ is not graded.

(2) If $\sigma=\id$, then $R_\tau\subseteq R_{\gamma\tau\gamma^{-1}}$ forces $\gamma\tau\gamma^{-1}=\tau$ for $\tau\in\Supp(R)$. So $\gamma$ centralizes $\Supp(R)$ and $\delta(R_\tau)\subseteq R_{\gamma\tau}=R_{\tau\gamma}$. This recovers \cite[Proposition 2.1]{AitMohamed2026GradedDifferential}.

(3) Let $(\sigma,\delta)$ be $\gamma$-compatible and let $g\in R_\gamma$ be a unit. Then $g^{-1}\in R_{\gamma^{-1}}$ (compare components of $gg^{-1}=1$). Put $t'=g^{-1}t\in A_e$, $\sigma'(r)=g^{-1}\sigma(r)g$ and $\delta'=g^{-1}\delta$. Then $t'r=\sigma'(r)t'+\delta'(r)$, $\sigma'$ is a ring endomorphism, and
\[
\delta'(rr')=g^{-1}\bigl(\sigma(r)\delta(r')+\delta(r)r'\bigr)=\sigma'(r)\delta'(r')+\delta'(r)r',
\]
so $\delta'$ is a $\sigma'$-derivation. Moreover $\sigma'(R_\tau)\subseteq R_{\gamma^{-1}}R_{\gamma\tau\gamma^{-1}}R_\gamma\subseteq R_\tau$ and $\delta'(R_\tau)\subseteq R_{\gamma^{-1}}R_{\gamma\tau}\subseteq R_\tau$. By induction, $t'^n=u_nt^n+(\text{lower terms})$, where
\[
u_n=g^{-1}\sigma(g^{-1})\cdots\sigma^{n-1}(g^{-1})
\]
is a unit; so $\{t'^n\}_{n\ge0}$ is a left $R$-basis of $A$, and $A=R[t';\sigma',\delta']$ with $\deg t'=e$ and $\sigma'$ graded. In particular, if $\sigma(r)=grg^{-1}$ for all $r$, then $\sigma'=\id$ and $A$ is the graded differential polynomial ring $R[t';\delta']$. Thus a degree of $t$ that cannot be moved to $e$ in this way requires $R_\gamma$ to contain no unit (Examples \ref{L73VXTR} and \ref{EKLEIN}).
\end{remark}

\begin{lemma}\label{W28KQRP}
Let $(\sigma,\delta)$ be $\gamma$-compatible.
\begin{enumerate}[label=\textup{(\arabic*)}]
\item For $u\in R_\gamma$, the pair $(\sigma,\delta_u)$ is $\gamma$-compatible.
\item $\delta$ is $\gamma$-inner if and only if $\delta=\delta_u$ for some $u\in R$.
\item If $\sigma$ is an automorphism, then $\sigma^j(R_\tau)=R_{\gamma^j\tau\gamma^{-j}}$ for all $j\in\Z$ and $\tau\in\Gamma$, the pair $(\sigma,\sigma^j\delta\sigma^{-j})$ is $\gamma$-compatible for every $j\in\Z$, and $\sigma\delta_u\sigma^{-1}=\delta_{\sigma(u)}$ with $\sigma(u)\in R_\gamma$ for $u\in R_\gamma$.
\end{enumerate}
\end{lemma}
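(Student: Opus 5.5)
Each of the three parts is a direct degree check against \eqref{B58LPXA}, so I will verify them in order.

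For (1), take $u\in R_\gamma$ and $r\in R_\tau$. Then $ur\in R_{\gamma\tau}$. Also $\sigma(r)\in R_{\gamma\tau\gamma^{-1}}$, so $\sigma(r)u\in R_{\gamma\tau\gamma^{-1}\gamma}=R_{\gamma\tau}$. Hence $\delta_u(R_\tau)\subseteq R_{\gamma\tau}$. The condition on $\sigma$ is unchanged, so $(\sigma,\delta_u)$ is $\gamma$-compatible. Extending additively finishes (1).

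For (2), one direction is trivial. For the other, suppose $\delta=\delta_u$ with $u=\sum_\rho u_\rho$ and $u_\rho\in R_\rho$. Fix $r\in R_\tau$ and take the component of degree $\rho\tau$ in $\delta(r)=ur-\sigma(r)u$. The term $u_\rho r$ lies in $R_{\rho\tau}$. The term $\sigma(r)u_{\rho'}$ lies in $R_{\gamma\tau\gamma^{-1}\rho'}$, and this degree equals $\rho\tau$ exactly when $\rho'=\gamma\tau^{-1}\gamma^{-1}\rho\tau$. For $\rho\ne\gamma$, the component of degree $\rho\tau\neq\gamma\tau$ of $\delta(r)$ vanishes. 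So the natural map to consider is $r\mapsto u_\rho r-\sigma(r)u_{\rho'}$, with $\rho'$ depending on $\tau$.

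This reindexing is the main obstacle, and the cleanest fix is a different decomposition. Define $\delta_{u_\rho}$ for each $\rho$, so that $\delta=\sum_\rho\delta_{u_\rho}$, a pointwise-finite sum. For $r\in R_\tau$, we have $\delta_{u_\rho}(r)\in R_{\rho\tau}\oplus R_{\gamma\tau\gamma^{-1}\rho}$; these two spaces may differ, which is what makes this awkward. Instead I would define $u_\gamma$ and compare $\delta$ with $\delta_{u_\gamma}$ directly. For $r\in R_\tau$, the degree-$\gamma\tau$ component of $ur-\sigma(r)u$ is $u_\gamma r-\sigma(r)u_{\gamma}$, because $\gamma\tau\gamma^{-1}\rho'=\gamma\tau$ forces $\rho'=\gamma$. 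Since $\delta(r)\in R_{\gamma\tau}$, it equals its own $\gamma\tau$-component, so $\delta(r)=\delta_{u_\gamma}(r)$. Additivity then gives $\delta=\delta_{u_\gamma}$ with $u_\gamma\in R_\gamma$, which proves (2).

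For (3), first note that $\sigma(R_\tau)\subseteq R_{\gamma\tau\gamma^{-1}}$ for all $\tau$, and that $\iota_\gamma$ permutes $\Gamma$. If $\sigma$ is bijective, then $R=\sigma(R)=\bigoplus_\tau\sigma(R_\tau)$ with $\sigma(R_\tau)\subseteq R_{\iota_\gamma\tau}$. Comparing components gives equality, $\sigma(R_\tau)=R_{\gamma\tau\gamma^{-1}}$. Consequently $\sigma^{-1}(R_\tau)=R_{\gamma^{-1}\tau\gamma}$, and induction gives $\sigma^j(R_\tau)=R_{\gamma^j\tau\gamma^{-j}}$ for all $j\in\Z$.

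Next I will check that $\sigma^j\delta\sigma^{-j}$ is a $\sigma$-derivation. Writing $d=\sigma^j\delta\sigma^{-j}$, we have
\[
d(rs)=\sigma^j\bigl(\delta(\sigma^{-j}r)\sigma^{-j}s+\sigma^{1-j}(r)\delta(\sigma^{-j}s)\bigr)=d(r)s+\sigma(r)d(s).
\]
Its degree behaviour is
\[
R_\tau\to R_{\gamma^{-j}\tau\gamma^{j}}\to R_{\gamma^{1-j}\tau\gamma^j}\to R_{\gamma\tau}.
\]
So $(\sigma,\sigma^j\delta\sigma^{-j})$ is $\gamma$-compatible.

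Finally, applying $\sigma$ to $\delta_u(\sigma^{-1}r)=u\sigma^{-1}(r)-r u$ gives $\sigma\delta_u\sigma^{-1}(r)=\sigma(u)r-\sigma(r)\sigma(u)=\delta_{\sigma(u)}(r)$. For $u\in R_\gamma$ we get $\sigma(u)\in R_{\gamma\gamma\gamma^{-1}}=R_\gamma$.
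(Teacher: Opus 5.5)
Your proof is correct and follows the paper's argument in all three parts. This covers the degree check in (1), comparing $\gamma\tau$-components to obtain $\delta=\delta_{u_\gamma}$ in (2), and in (3) the component comparison giving $\sigma(R_\tau)=R_{\gamma\tau\gamma^{-1}}$ followed by the conjugation computations. The only thing to change is the exploratory detour in (2) about components of degree $\rho\tau$ and the ``reindexing obstacle'', which you can delete: the comparison of $\gamma\tau$-components, where both $\rho\tau=\gamma\tau$ and $\gamma\tau\gamma^{-1}\rho=\gamma\tau$ force $\rho=\gamma$, is already the whole argument.
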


\begin{proof}
(1) For $r\in R_\tau$ we have $ur\in R_{\gamma\tau}$ and $\sigma(r)u\in R_{\gamma\tau\gamma^{-1}}R_\gamma\subseteq R_{\gamma\tau}$, so $\delta_u(r)\in R_{\gamma\tau}$.

(2) Suppose $\delta=\delta_u$ with $u=\sum_\rho u_\rho$, and let $r\in R_\tau$. Then $u_\rho r\in R_{\rho\tau}$ and $\sigma(r)u_\rho\in R_{\gamma\tau\gamma^{-1}\rho}$. Each of these degrees equals $\gamma\tau$ only for $\rho=\gamma$. Since $\delta(r)\in R_{\gamma\tau}$, comparing components of degree $\gamma\tau$ in $\delta(r)=\sum_\rho(u_\rho r-\sigma(r)u_\rho)$ gives $\delta(r)=u_\gamma r-\sigma(r)u_\gamma$. By additivity, $\delta=\delta_{u_\gamma}$.

(3) We first show $\sigma(R_\tau)=R_{\gamma\tau\gamma^{-1}}$. The inclusion $\subseteq$ is compatibility. Let $x\in R_{\gamma\tau\gamma^{-1}}$ and write $x=\sigma(y)$ with $y=\sum_\rho y_\rho$. The terms $\sigma(y_\rho)\in R_{\gamma\rho\gamma^{-1}}$ lie in pairwise distinct degrees, so $x=\sigma(y_\tau)$. Hence $\sigma(R_\tau)=R_{\gamma\tau\gamma^{-1}}$ and, $\sigma$ being bijective, $\sigma^{-1}(R_\rho)=R_{\gamma^{-1}\rho\gamma}$. Induction on $|j|$ gives $\sigma^j(R_\tau)=R_{\gamma^j\tau\gamma^{-j}}$ for all $j\in\Z$.

Put $\delta_j=\sigma^j\delta\sigma^{-j}$. Since $\sigma^{\pm j}$ are ring homomorphisms,
\[
 \delta_j(ab)=\sigma^j\bigl(\sigma^{1-j}(a)\,\delta(\sigma^{-j}(b))+\delta(\sigma^{-j}(a))\,\sigma^{-j}(b)\bigr)=\sigma(a)\delta_j(b)+\delta_j(a)b,
\]
so $\delta_j$ is a $\sigma$-derivation. For $a\in R_\tau$ we get $\sigma^{-j}(a)\in R_{\gamma^{-j}\tau\gamma^{j}}$, then $\delta\sigma^{-j}(a)\in R_{\gamma^{1-j}\tau\gamma^{j}}$, and finally $\delta_j(a)\in R_{\gamma^{j}\gamma^{1-j}\tau\gamma^{j}\gamma^{-j}}=R_{\gamma\tau}$. Finally, applying $\sigma$ to $\delta_u(\sigma^{-1}(r))=u\sigma^{-1}(r)-ru$ gives $\sigma\delta_u\sigma^{-1}(r)=\sigma(u)r-\sigma(r)\sigma(u)=\delta_{\sigma(u)}(r)$, and $\sigma(u)\in R_{\gamma\gamma\gamma^{-1}}=R_\gamma$.
\end{proof}

By Lemma \ref{W28KQRP}(2), a $\gamma$-compatible $\sigma$-derivation is $\gamma$-inner if and only if it is inner. From now on we simply say \emph{inner} or \emph{outer}. As in the ungraded case \cite[Lemma 1.5]{Goodearl1992}, the inner case reduces to a skew polynomial ring.

\begin{lemma}\label{S91TXMK}
Let $(\sigma,\delta)$ be $\gamma$-compatible with $\delta=\delta_u$, $u\in R_\gamma$. Then $s=t-u\in A_\gamma$ satisfies $sr=\sigma(r)s$ for $r\in R$, $A=\bigoplus_{n\ge0}Rs^n$ is the graded skew polynomial ring $R[s;\sigma]$ with $\deg s=\gamma$, and $As$ is a nonzero proper graded ideal of $A$.
\end{lemma}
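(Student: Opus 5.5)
The plan is to verify the three claims in turn: the degree of $s$, the commutation relation, the basis and grading, and finally the properties of the ideal $As$.

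\emph{Degree and commutation.} By Theorem \ref{R39XPKM}, $t\in A_\gamma$, and $u\in R_\gamma\subseteq A_\gamma$, so $s=t-u\in A_\gamma$. For $r\in R$, the relation \eqref{O47UXRT} gives
\[
sr=tr-ur=\sigma(r)t+ur-\sigma(r)u-ur=\sigma(r)(t-u)=\sigma(r)s .
\]

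\emph{Basis and grading.} Expanding $(t-u)^n$ and using \eqref{O47UXRT} to move coefficients to the left shows by induction that $s^n=t^n+(\text{terms of lower $t$-degree})$. A triangularity argument then shows that $\{s^n\}_{n\ge0}$ is a left $R$-basis of $A$: the transition matrix from $\{t^n\}$ is unitriangular over $R$, so every $t^n$ lies in $\sum_{i\le n}Rs^i$, and a nontrivial relation $\sum a_is^i=0$ would have nonzero leading coefficient in $t$. Hence $A=\bigoplus_nRs^n$ with $sr=\sigma(r)s$, which is $R[s;\sigma]$. Since $s\in A_\gamma$ and $R_\rho\subseteq A_\rho$, we get $R_\rho s^n\subseteq A_{\rho\gamma^n}$. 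Thus $A_\alpha\supseteq\bigoplus_nR_{\alpha\gamma^{-n}}s^n$, and since these subspaces sum to $A$, equality holds. So this is the graded skew polynomial ring with $\deg s=\gamma$, consistent with the uniqueness in Theorem \ref{R39XPKM}.

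\emph{The ideal $As$.} Since $sr=\sigma(r)s$ and $s$ commutes with itself, $sA=\sum_n s Rs^n\subseteq\sum_n Rs^{n+1}\subseteq As$. Hence $As$ is a two-sided ideal, generated by the homogeneous element $s$, so it is graded. It is nonzero because it contains $s\neq0$. It is proper because $As=\bigoplus_{n\ge1}Rs^n$ (using $Rs^n s=Rs^{n+1}$), so $1\notin As$ by uniqueness of the expansion in the basis $\{s^n\}$.

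The only mildly delicate step is the basis claim, and the unitriangular leading-term argument handles it. Note that $\sigma$ need not be injective here; this is harmless, since the leading coefficient of $s^n$ is $1$.
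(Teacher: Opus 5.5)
Your proposal is correct and follows the paper's proof essentially step for step. It uses the same commutation computation, the same triangularity argument $s^n-t^n\in\sum_{m<n}Rt^m$ to get the basis $\{s^n\}$, and the same inclusion $sA\subseteq As$ together with $As=\bigoplus_{n\ge1}Rs^n$; your explicit check that $A_\alpha=\bigoplus_n R_{\alpha\gamma^{-n}}s^n$ is a small detail that the paper leaves implicit.
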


\begin{proof}
Since $\delta(r)=ur-\sigma(r)u$,
\[
 sr=\sigma(r)t+\delta(r)-ur=\sigma(r)t-\sigma(r)u=\sigma(r)s.
\]
By \eqref{O47UXRT}, $t^iu\in\sigma^i(u)t^i+\sum_{m<i}Rt^m$, and induction on $n$ gives $s^n-t^n\in\sum_{m<n}Rt^m$. Hence $\{s^n\}_{n\ge0}$ is a left $R$-basis of $A$. Since $s(rs^n)=\sigma(r)s^{n+1}\in As$, we get $sA\subseteq As$, so the left ideal $As$ is two-sided. In the basis $\{s^n\}$ it equals $\bigoplus_{n\ge1}Rs^n$, so $1\notin As$. It is graded because $s$ is homogeneous.
\end{proof}

\begin{example}[Quantized Weyl algebra]\label{C56NPQW}
Let $R=\Q[x]$ with $\deg x=1$, $\sigma(f)(x)=f(2x)$ and $\delta(f)=(f(2x)-f(x))/x$. Then $\delta(fg)=\sigma(f)\delta(g)+\delta(f)g$, $\sigma$ preserves degrees, and
\[
\delta(x^m)=(2^m-1)x^{m-1}\quad(m\ge1).
\]
Thus $(\sigma,\delta)$ is $(-1)$-compatible, and by Theorem \ref{R39XPKM}
\[
 A=\Q[x][t;\sigma,\delta]\cong\Q\langle x,t\rangle/(tx-2xt-1),\quad \deg x=1,\ \deg t=-1.
\]
\end{example}

In the next example $\sigma$ moves homogeneous components and $t$ has noncentral degree.

\begin{example}[A free algebra graded by $S_3$]\label{L73VXTR}
Compose permutations from right to left. Let $\Gamma=S_3$, $\gamma=(123)$, and let $R=k\langle x_1,x_2,x_3\rangle$ be the free algebra graded by
\[
 \deg x_1=(12),\quad \deg x_2=(23),\quad \deg x_3=(13).
\]
Then $\gamma(12)\gamma^{-1}=(23)$, $\gamma(23)\gamma^{-1}=(13)$, $\gamma(13)\gamma^{-1}=(12)$, and $\gamma(12)=(13)$, $\gamma(23)=(12)$, $\gamma(13)=(23)$. Let $\sigma$ be the automorphism $x_1\mapsto x_2\mapsto x_3\mapsto x_1$, and let $\delta$ be the $\sigma$-derivation with
\[
 \delta(x_1)=x_3,\quad \delta(x_2)=x_1,\quad \delta(x_3)=x_2;
\]
it exists and is unique because $R$ is free. Thus $\deg\sigma(x_i)=\gamma\deg(x_i)\gamma^{-1}$ and $\deg\delta(x_i)=\gamma\deg(x_i)$. Both properties pass to products: if $u\in R_\alpha$ and $v\in R_\beta$ satisfy them, then $\sigma(uv)\in R_{\gamma\alpha\beta\gamma^{-1}}$ and $\delta(uv)=\sigma(u)\delta(v)+\delta(u)v\in R_{\gamma\alpha\beta}$. Hence $(\sigma,\delta)$ is $\gamma$-compatible, and $A=R[t;\sigma,\delta]$ is graded with $\deg t=(123)$.

The transpositions generate $S_3$, whose centre is trivial, so no $\gamma\neq e$ centralizes $\Supp(R)$. By Remark \ref{N47ZXVL}(2), a differential polynomial ring over $R$ admits a compatible grading only with $\deg t=e$. The homogeneous units of $R$ are the nonzero scalars, so $R_\gamma$ contains no unit and Remark \ref{N47ZXVL}(3) does not apply.

The derivation $\delta$ is outer. A monomial of length $\ell$ has degree of sign $(-1)^\ell$, and $\gamma$ is even, so every monomial in $R_\gamma$ has even length $\ell\ge2$. For $u\in R_\gamma$, every monomial of $\delta_u(x_1)=ux_1-x_2u$ therefore has length at least $3$, whereas $\delta(x_1)=x_3$. Hence $\delta\neq\delta_u$, and Lemma \ref{W28KQRP}(2) applies. On the other hand, the ideal generated by $x_1,x_2,x_3$ is graded and stable under $\sigma$ and $\delta$, so $A$ is not gr-simple by Proposition \ref{T64XWRP} below.

The degree of $t$ cannot be moved to $e$: no $s\in A_e$ has powers forming a left $R$-basis of $A$. Suppose that $s\in A_e$ is such an element. Since $R$ is a domain and $\sigma$ is injective, $\degt(fg)=\degt f+\degt g$ for nonzero $f,g\in A$. Then $d=\degt s\ge1$ because $A\neq R$, and every nonzero element of $\bigoplus_nRs^n$ has $t$-degree divisible by $d$; applied to $t$, this gives $d=1$. Write $s=bt+a$ and $t=r_1s+r_0$ with $a,b,r_0,r_1\in R$. By \eqref{F65QNWT}, $b\in R_{\gamma^{-1}}$, and comparing $t$-coefficients gives $r_1b=1$. In the free algebra this forces $b$ to be a nonzero scalar, so $b\in R_e$. This contradicts $b\in R_{\gamma^{-1}}$ with $\gamma^{-1}\neq e$.
\end{example}

We now turn to group algebras, in the notation of Definition \ref{DFN5TQX}. Taking $\tau=\rho=e$ in the cocycle identity gives $c(e)=0$, and then $c(\tau^{-1})=-\chi(\tau)^{-1}c(\tau)$.

\begin{proposition}\label{M18QZKN}
Let $k$ be a field, $G$ a group, $R=kG$ with $R_\tau=k\tau$, $\gamma\in G$, and $\chi\colon G\to k^\times$ a homomorphism. Put $\sigma(\tau)=\chi(\tau)\gamma\tau\gamma^{-1}$.
\begin{enumerate}[label=\textup{(\arabic*)}]
\item $\sigma$ extends to a $k$-algebra automorphism of $kG$ with $\sigma(R_\tau)=R_{\gamma\tau\gamma^{-1}}$.
\item The $k$-linear $\sigma$-derivations $\delta$ for which $(\sigma,\delta)$ is $\gamma$-compatible are exactly the maps $\delta_c(\tau)=c(\tau)\gamma\tau$ with $c\in Z^1(G,k_\chi)$.
\item $\delta_c$ is $\gamma$-inner if and only if $c\in B^1(G,k_\chi)$.
\item $\sigma\delta_c\sigma^{-1}-\delta_c=-\delta_{c(\gamma)\gamma}$. In particular it is $\gamma$-inner.
\end{enumerate}
\end{proposition}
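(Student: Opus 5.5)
Each of the four parts comes down to a direct computation on the basis $G$ of $kG$; we handle them in order.

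For (1), we define $\sigma$ on the basis $G$ and extend it $k$-linearly. Since $\chi$ is a homomorphism and conjugation by $\gamma$ is an automorphism of $G$, we have $\sigma(\tau\rho)=\chi(\tau)\chi(\rho)\gamma\tau\gamma^{-1}\gamma\rho\gamma^{-1}=\sigma(\tau)\sigma(\rho)$ and $\sigma(e)=e$, so $\sigma$ is a $k$-algebra endomorphism. The map $\tau\mapsto\chi(\gamma^{-1}\tau\gamma)^{-1}\gamma^{-1}\tau\gamma$ extends to its inverse, so $\sigma$ is an automorphism. Finally, $\sigma(k\tau)=k\gamma\tau\gamma^{-1}$ holds because $\chi(\tau)\neq0$.

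For (2), let $\delta$ be $k$-linear with $(\sigma,\delta)$ $\gamma$-compatible. Then $\delta(\tau)\in R_{\gamma\tau}=k\gamma\tau$, so $\delta(\tau)=c(\tau)\gamma\tau$ for a unique function $c\colon G\to k$. A $k$-linear map on $kG$ is a $\sigma$-derivation if and only if the twisted Leibniz rule holds on basis elements. We compute
\[
\delta(\tau)\rho+\sigma(\tau)\delta(\rho)=c(\tau)\gamma\tau\rho+\chi(\tau)c(\rho)\gamma\tau\gamma^{-1}\gamma\rho=\bigl(c(\tau)+\chi(\tau)c(\rho)\bigr)\gamma\tau\rho .
\]
Comparing with $\delta(\tau\rho)=c(\tau\rho)\gamma\tau\rho$ shows that the Leibniz rule is exactly the cocycle identity. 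Conversely, any $c\in Z^1(G,k_\chi)$ defines a $\sigma$-derivation $\delta_c$, and $\delta_c$ is visibly $\gamma$-compatible.

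For (3), $\gamma$-inner means $\delta_c=\delta_u$ with $u\in R_\gamma$, so $u=x\gamma$ for some $x\in k$. Then
\[
\delta_u(\tau)=x\gamma\tau-\chi(\tau)\gamma\tau\gamma^{-1}x\gamma=x\bigl(1-\chi(\tau)\bigr)\gamma\tau .
\]
Hence $\delta_c=\delta_{x\gamma}$ if and only if $c=-x(\chi-1)$, and this holds for some $x$ if and only if $c\in B^1(G,k_\chi)$.

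For (4), set $\rho=\gamma^{-1}\tau\gamma$, so that $\sigma^{-1}(\tau)=\chi(\rho)^{-1}\rho$. Then
\[
\sigma\delta_c\sigma^{-1}(\tau)=\chi(\rho)^{-1}c(\rho)\,\sigma(\gamma\rho)=\chi(\rho)^{-1}c(\rho)\chi(\gamma)\chi(\rho)\,\gamma\tau=\chi(\gamma)c(\gamma^{-1}\tau\gamma)\,\gamma\tau .
\]
We expand $c(\gamma\cdot\gamma^{-1}\tau\gamma)$ with the cocycle identity in two ways. First, $c(\gamma\rho)=c(\gamma)+\chi(\gamma)c(\rho)$. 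Second, $c(\tau\gamma)=c(\tau)+\chi(\tau)c(\gamma)$. Equating the two expansions gives
\[
\chi(\gamma)c(\rho)-c(\tau)=c(\gamma)\bigl(\chi(\tau)-1\bigr).
\]
The difference $(\sigma\delta_c\sigma^{-1}-\delta_c)(\tau)$ is therefore $c(\gamma)(\chi(\tau)-1)\gamma\tau$. By the formula in (3) with $x=c(\gamma)$, this equals $-\delta_{c(\gamma)\gamma}(\tau)$, and in particular the difference is $\gamma$-inner. The only real danger is a sign slip in this final bookkeeping; the rest of the proof is routine.
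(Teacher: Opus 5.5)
Your proposal is correct and follows the paper's proof essentially step for step: the same basis computations in (1)--(3), and in (4) the same two-way expansion of $c(\tau\gamma)$ via the cocycle identity. The only cosmetic difference is that you write $\tau\gamma=\gamma\cdot(\gamma^{-1}\tau\gamma)$ directly, which avoids the paper's use of $c(\gamma^{-1})=-\chi(\gamma)^{-1}c(\gamma)$.
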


\begin{proof}
(1) For $\tau,\rho\in G$,
\[
 \sigma(\tau)\sigma(\rho)=\chi(\tau)\chi(\rho)\gamma\tau\gamma^{-1}\gamma\rho\gamma^{-1}=\sigma(\tau\rho).
\]
So $\tau\mapsto\sigma(\tau)$ is a group homomorphism into the units of $kG$, and it extends $k$-linearly to an algebra endomorphism of $kG$. Since $k^\times$ is abelian, $\chi(\gamma^{-1}\tau\gamma)=\chi(\tau)$, and $\psi(\tau)=\chi(\tau)^{-1}\gamma^{-1}\tau\gamma$ satisfies $\sigma\psi(\tau)=\tau=\psi\sigma(\tau)$. Hence $\psi=\sigma^{-1}$. Finally $\sigma(k\tau)=k\,\chi(\tau)\gamma\tau\gamma^{-1}=k\gamma\tau\gamma^{-1}$ because $\chi(\tau)\neq0$.

(2) Compatibility forces $\delta(\tau)\in R_{\gamma\tau}=k\gamma\tau$, say $\delta(\tau)=c(\tau)\gamma\tau$. Then
\[
 \sigma(\tau)\delta(\rho)+\delta(\tau)\rho=\chi(\tau)\gamma\tau\gamma^{-1}c(\rho)\gamma\rho+c(\tau)\gamma\tau\rho=\bigl(c(\tau)+\chi(\tau)c(\rho)\bigr)\gamma\tau\rho.
\]
Since $G$ is a $k$-basis of $kG$, the identity $\delta(\tau\rho)=\sigma(\tau)\delta(\rho)+\delta(\tau)\rho$ is exactly the cocycle identity. 

Conversely, for a cocycle $c$ the $k$-linear map $\delta_c$ satisfies the Leibniz rule on the basis $G$, hence everywhere by bilinearity.

(3) Write $u=x\gamma\in R_\gamma$ with $x\in k$. Then
\[
 \delta_u(\tau)=x\gamma\tau-\chi(\tau)\gamma\tau\gamma^{-1}x\gamma=x(1-\chi(\tau))\gamma\tau.
\]
So the $\gamma$-inner $\delta_c$ are those with $c=-x(\chi-1)$, that is, $c\in B^1(G,k_\chi)$.

(4) We have $\sigma^{-1}(\tau)=\chi(\tau)^{-1}\gamma^{-1}\tau\gamma$ and $\sigma(\tau\gamma)=\chi(\tau)\chi(\gamma)\gamma\tau$. Hence
\[
 \sigma\delta_c\sigma^{-1}(\tau)=\chi(\tau)^{-1}c(\gamma^{-1}\tau\gamma)\,\sigma(\tau\gamma)=\chi(\gamma)\,c(\gamma^{-1}\tau\gamma)\,\gamma\tau.
\]
The cocycle identity, applied to $\gamma^{-1}\cdot(\tau\gamma)$ and to $\tau\cdot\gamma$, together with $c(\gamma^{-1})=-\chi(\gamma)^{-1}c(\gamma)$, gives
\[
 \chi(\gamma)c(\gamma^{-1}\tau\gamma)=-c(\gamma)+c(\tau)+\chi(\tau)c(\gamma)=c(\tau)-(1-\chi(\tau))c(\gamma).
\]
By the formula in (3) with $x=c(\gamma)$, this reads $\sigma\delta_c\sigma^{-1}(\tau)=\delta_c(\tau)-\delta_{c(\gamma)\gamma}(\tau)$. Since $c(\gamma)\gamma\in R_\gamma$, the difference is $\gamma$-inner.
\end{proof}

Every $k$-linear $\iota_\gamma$-graded endomorphism of $kG$ is of the form $\tau\mapsto\chi(\tau)\gamma\tau\gamma^{-1}$ for a character $\chi$: it maps the unit $\tau$ to a nonzero element of $k\gamma\tau\gamma^{-1}$, and it is multiplicative. Hence Proposition \ref{M18QZKN} describes all $\gamma$-compatible pairs of $k$-linear maps on $kG$. If $\chi=1$, then $\sigma$ is conjugation by the homogeneous unit $\gamma$, and Remark \ref{N47ZXVL}(3) identifies $kG[t;\sigma,\delta_c]$ with the differential polynomial ring $kG[t';\delta']$, where $\deg t'=e$, $\delta'(\tau)=c(\tau)\tau$ and $c\in\operatorname{Hom}(G,k)$. If $\chi\neq1$, then $\sigma$ is not conjugation by a homogeneous unit: the homogeneous units are the elements $\lambda\rho$ with $\lambda\in k^\times$, $\rho\in G$, and conjugation by $\lambda\rho$ maps $\tau$ to $\rho\tau\rho^{-1}$, whereas $\sigma(\tau)=\chi(\tau)\gamma\tau\gamma^{-1}$. Hopf--Ore extensions of group algebras \cite{Panov2003} are related, but no Hopf structure is used here.

\section{Gr-simplicity}\label{H82TVLX}

Throughout this section $(\sigma,\delta)$ is $\gamma$-compatible and $A$ carries the grading \eqref{F65QNWT}.

\subsection{Necessary conditions and a counterexample}

\begin{proposition}\label{T64XWRP}
If $A$ is gr-simple, then $R$ is $(\sigma,\delta)$-gr-simple and $\delta$ is outer.
\end{proposition}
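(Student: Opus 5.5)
We prove both parts by contraposition. In each case a failure of the condition produces a nonzero proper graded ideal of $A$.

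\emph{Outerness of $\delta$.} Suppose $\delta$ is inner. By Lemma \ref{W28KQRP}(2), $\delta=\delta_u$ for some $u\in R_\gamma$. Lemma \ref{S91TXMK} then shows that $As$, with $s=t-u$, is a nonzero proper graded ideal of $A$. So $A$ is not gr-simple.

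\emph{$(\sigma,\delta)$-gr-simplicity of $R$.} Let $I$ be a nonzero proper $(\sigma,\delta)$-stable graded ideal of $R$. We take the candidate ideal
\[
IA=\Bigl\{\sum_n a_nt^n : a_n\in I\Bigr\}=\bigoplus_{n\ge0}It^n .
\]
Three properties need checking.

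First, $\bigoplus_n It^n$ is a two-sided ideal. Closure under left multiplication by $R$ is immediate, since $I$ is an ideal. Closure under right multiplication by $R$ uses the commutation formula $t^n r\in\sum_{m\le n}Rt^m$. More precisely, $t^n r=\sum_m f_{n,m}(r)t^m$, where each $f_{n,m}$ is a sum of words in $\sigma$ and $\delta$. By stability of $I$, $a\,t^n r\in\sum_m I\,f_{n,m}(r)t^m\subseteq\sum_m It^m$. Closure under left multiplication by $t$ follows from $t\,a t^n=\sigma(a)t^{n+1}+\delta(a)t^n$ together with $\sigma(I),\delta(I)\subseteq I$. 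Closure under right multiplication by $t$ is trivial. Since $A$ is generated by $R$ and $t$, this suffices.

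Second, the ideal is graded. This holds because $I=\bigoplus_\tau(I\cap R_\tau)$ and the pieces $(I\cap R_\tau)t^n$ lie in $A_{\tau\gamma^n}$ by \eqref{F65QNWT}.

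Third, the ideal is nonzero and proper. It is nonzero since $I\neq0$. It is proper because, by uniqueness of left $R$-coefficients in the free basis $\{t^n\}$, its intersection with $R$ is $I\neq R$.

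The only step needing care is right multiplication by $R$. Since stability of $I$ already gives left closure under $t$, the most economical route is to avoid right multiplication altogether. Note that $\sum_n It^n$ is the left ideal $AI$ generated by $I$; indeed $t\,I\subseteq It+I$ gives $AI\subseteq\sum_n It^n$. It then suffices to show $AI$ is also a right ideal, that is, $IA\subseteq AI$. This follows from $I\,t\subseteq It$ and $I\,R\subseteq I$, so the two-sidedness is straightforward.
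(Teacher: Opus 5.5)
Your main argument is exactly the paper's proof and is complete on its own: the three checks on $\bigoplus_n It^n$ (ideal, graded, nonzero and proper), together with Lemma~\ref{W28KQRP}(2) and Lemma~\ref{S91TXMK} for the inner case. There are two corrections.

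First, in the right-multiplication step, $a\,f_{n,m}(r)\in I$ holds because $I$ is a right ideal. Stability of $I$ plays no role there.

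Second, and more seriously, your final paragraph is wrong and should be deleted. Since $A=\bigoplus_n Rt^n$ and $IR=I$, the set $\bigoplus_n It^n$ is the \emph{right} ideal $IA$ generated by $I$, not the left ideal $AI$. Stability gives only $AI\subseteq\bigoplus_n It^n$, and the reverse inclusion fails in general. Take $R=k[x]$ with the trivial grading, $\sigma(f)(x)=f(x^2)$, $\delta=0$ and $I=xR$. Then $t^n(xg)=x^{2^n}g(x^{2^n})t^n$, so $AI=\bigoplus_n x^{2^n}R\,t^n$, which contains $x$ but not $xt$. Thus $AI$ is not even a right ideal, and the tautology $It\subseteq It$ does not yield $It\subseteq AI$.

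The correct economical remark is the mirror image of yours. $IA=\bigoplus_n It^n$ is closed under right multiplication for free, and left multiplication by $R$ is trivial. The only check that uses stability is $t\,(IA)\subseteq IA$.
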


\begin{proof}
Let $I$ be a graded ideal of $R$ with $\sigma(I)+\delta(I)\subseteq I$, and put $IA=\bigoplus_nIt^n$. Clearly $R(IA)\subseteq IA$ and $(IA)t\subseteq IA$. Iterating \eqref{O47UXRT} gives $t^nr\in\sum_{m\le n}Rt^m$, so $(it^n)r\in\sum_mIt^m$ for $i\in I$ and $r\in R$. Finally $t(it^n)=\sigma(i)t^{n+1}+\delta(i)t^n\in IA$. Since $R$ and $t$ generate $A$, $IA$ is an ideal of $A$. It is graded by \eqref{F65QNWT}, and $IA\cap R=I$. Hence $I\in\{0,R\}$ when $A$ is gr-simple. If $\delta$ is inner, it is $\gamma$-inner by Lemma \ref{W28KQRP}(2), and Lemma \ref{S91TXMK} gives a nonzero proper graded ideal.
\end{proof}

For $\sigma=\id$ the converse holds in characteristic zero \cite[Theorem 1.10(2)]{AitMohamed2026GradedDifferential}. For skew derivations it fails.

\begin{proposition}\label{Q95LNVK}
In Example \ref{C56NPQW}, $R=\Q[x]$ is $(\sigma,\delta)$-gr-simple and $\delta$ is outer, but $A$ is not gr-simple.
\end{proposition}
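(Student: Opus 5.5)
Three things have to be shown: $\Q[x]$ is $(\sigma,\delta)$-gr-simple, $\delta$ is outer, and $A$ has a nonzero proper graded ideal.

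\emph{Gr-simplicity of $R$.} The grading of $R=\Q[x]$ has $R_m=\Q x^m$ for $m\ge0$. A graded ideal is therefore generated by monomials, so a nonzero one is $x^m\Q[x]$ for some $m\ge0$. If $m\ge1$, then $\delta(x^m)=(2^m-1)x^{m-1}$ with $2^m-1\neq0$. Hence a $\delta$-stable graded ideal containing $x^m$ also contains $x^{m-1}$. Descending, it contains $1$, so it equals $R$.

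\emph{Outerness of $\delta$.} By Lemma \ref{W28KQRP}(2) it suffices to rule out $\delta=\delta_u$ with $u\in R_{-1}$. But $R_{-1}=0$, so $\delta_u=0$. Since $\delta(x)=1\neq0$, no such $u$ exists, and $\delta$ is outer.

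\emph{A proper graded ideal of $A$.} Put $z=tx-xt$. Using $tx=2xt+1$ we get $z=xt+1$, which is homogeneous of degree $0$ and satisfies $z\notin\Q$. The key computations are
\[
xz=x(xt+1)=x^2t+x,\qquad zx=xtx+x=x(2xt+1)+x=2x^2t+2x=2xz,
\]
\[
tz=txt+t=(2xt+1)t+t=2xt^2+2t,\qquad zt=xt^2+t,\qquad\text{so } tz=2zt.
\]
Thus $zx=2xz$ and $tz=2zt$. Since $x$ and $t$ generate $A$, these relations give $Az=zA$, so $z$ is normal and $Az$ is a two-sided ideal. It is graded because $z$ is homogeneous.

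It remains to check that $Az$ is nonzero and proper. It is nonzero because $A$ is a domain; this holds since $R$ is a domain and $\sigma$ is injective, so leading coefficients multiply. For properness, every nonzero element of $Az$ has $t$-degree at least $1$: the leading coefficient of $fz$ is $\lc(f)\sigma^n(x)\neq0$, where $n=\degt f$. Hence $1\notin Az$.

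The only real work is finding the normal element $1+xt$; the introduction supplies it. Everything else is the direct verification above.
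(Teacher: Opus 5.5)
Your proof is correct and follows the paper's: the same classification of graded ideals as $(x^m)$ ruled out by $\delta(x^m)=(2^m-1)x^{m-1}$, the same appeal to Lemma \ref{W28KQRP}(2) with $R_{-1}=0$, and the same normal element $z=1+xt$ with $zx=2xz$ and $tz=2zt$. The only difference is in showing $1\notin Az$. You use that $A$ is a domain, so $\degt(fz)=\degt f+1\ge1$. The paper instead uses the homomorphism $A\to\Q[X^{\pm1}]$, $x\mapsto X$, $t\mapsto -X^{-1}$, which kills $z$. Both arguments work, and the paper reuses its homomorphism in Remark \ref{Z32PXMT} to compute the leading coefficients of $t$-degree-one elements of $Az$.
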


\begin{proof}
The nonzero proper graded ideals of $\Q[x]$ are the ideals $(x^m)$ with $m\ge1$, and none is $\delta$-stable since $\delta(x^m)=(2^m-1)x^{m-1}$. As $R_{-1}=0$, the only $(-1)$-inner $\sigma$-derivation is $0$, while $\delta(x)=1$; so $\delta$ is outer by Lemma \ref{W28KQRP}(2).

Let $z=1+xt\in A_0$. From $tx=2xt+1$ we get
\[
zx=x+x(2xt+1)=2xz,\quad tz=t+(2xt+1)t=2zt.
\]
Hence $zA=Az$, and $Az$ is a nonzero graded ideal. The assignment $x\mapsto X$, $t\mapsto-X^{-1}$ kills $tx-2xt-1$, so it defines a homomorphism $A\to\Q[X^{\pm1}]$. It kills $z$ and is the identity on $\Q$, so $Az\neq A$.
\end{proof}

\begin{remark}\label{Z32PXMT}
The element $z$ comes from a localization. On the gr-simple ring $R'=\Q[x^{\pm1}]$ the maps $\sigma$ and $\delta$ extend by the same formulas, and $\delta=\delta_u$ with $u=-x^{-1}\in R'_{-1}$, since both are $\sigma$-derivations and $\delta_u(x)=-1+2=1=\delta(x)$. By Lemma \ref{S91TXMK}, $s=t+x^{-1}$ generates a proper graded ideal of $R'[t;\sigma,\delta]$, and $z=xs$. Thus $\delta$ is outer on $R$ only because the element that makes it inner is not in $R$. The element $z$ is normal of $t$-degree one (see \cite{Jordan2002Normal} for such elements and \cite{Jordan1995QuantizedWeyl} for localizations of the quantized Weyl algebra).

The leading-coefficient argument of \cite{AitMohamed2026GradedDifferential} fails because, for $f=a_nt^n+a_{n-1}t^{n-1}+\cdots$,
\[
 tf-ft=(\sigma(a_n)-a_n)t^{n+1}+\bigl(\delta(a_n)+\sigma(a_{n-1})-a_{n-1}\bigr)t^n+\cdots.
\]
The commutator with $t$ no longer lowers the $t$-degree, and ideals of leading coefficients need not be $\delta$-stable. Here the leading coefficients of the elements of $Az$ of $t$-degree one form the ideal $(x)$: if $a_1t+a_0\in Az$, applying the homomorphism above gives $a_1=xa_0$, and conversely $a_0z=a_0+xa_0t$. The ideal $(x)$ is not $\delta$-stable.
\end{remark}

\subsection{A characterization over gr-simple coefficient rings}

From now on $\sigma$ is an automorphism. By Lemma~\ref{W28KQRP}(3), $\sigma(R_\tau)=R_{\iota_\gamma(\tau)}$ for every $\tau$. The proof of Theorem \ref{J76MXRP} rests on ideals of leading coefficients; for $\sigma=\id$ and the trivial grading, compare \cite[Lemma 4.13]{OinertRichterSilvestrov2013}. For a graded ideal $J$ of $A$ and $n\ge0$ let
\[
 L_n(J)=\Bigl\{a\in R:\ at^n+\sum_{i<n}a_it^i\in J\ \text{for some }a_i\in R\Bigr\}.
\]

\begin{lemma}\label{A87KQWR}
$L_n(J)$ is a graded ideal of $R$.
\end{lemma}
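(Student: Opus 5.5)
We must show that $L_n(J)$ is closed under addition, under multiplication by $R$ on both sides, and is spanned by homogeneous elements (in the sense that the homogeneous components of each member lie in it). Note that $0\in L_n(J)$, because $0\in J$. Throughout, $\sigma$ is an automorphism, so $\sigma(R_\tau)=R_{\iota_\gamma(\tau)}$ by Lemma~\ref{W28KQRP}(3).

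\emph{Additive subgroup and left ideal.} If $f=at^n+\sum_{i<n}a_it^i$ and $g=bt^n+\sum_{i<n}b_it^i$ lie in $J$, then $f-g\in J$ has $t^n$-coefficient $a-b$; hence $a-b\in L_n(J)$. For $r\in R$ we have $rf\in J$ with $t^n$-coefficient $ra$, so $L_n(J)$ is a left ideal.

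\emph{Right ideal.} Take $r\in R$. Iterating \eqref{O47UXRT} gives
\[
t^nr=\sigma^n(r)t^n+(\text{terms of lower }t\text{-degree}),\qquad t^ir\in\sum_{m\le i}Rt^m .
\]
Therefore $fr=a\sigma^n(r)t^n+(\text{lower terms})\in J$, and so $a\sigma^n(r)\in L_n(J)$. Since $\sigma$ is surjective, given any $s\in R$ we may choose $r=\sigma^{-n}(s)$, which yields $as\in L_n(J)$. This is the only step that uses the hypothesis that $\sigma$ is an automorphism.

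\emph{Gradedness.} This is the step that needs some care. Let $a\in L_n(J)$ be witnessed by $f\in J$, and let $a_\rho$ be the component of $a$ of degree $\rho$. Since $J$ is a graded ideal, the homogeneous component $f_\alpha$ of $f$ in degree $\alpha$ lies in $J$. By \eqref{F65QNWT}, the component $f_\alpha$ is obtained by taking, for each $i$, the degree-$\alpha\gamma^{-i}$ part of the $i$-th coefficient. In particular the $t^n$-coefficient of $f_{\rho\gamma^n}$ is $a_\rho$, and the other terms of $f_{\rho\gamma^n}$ have $t$-degree less than $n$. Hence $a_\rho\in L_n(J)$ for every $\rho$, so $L_n(J)$ is generated by homogeneous elements.

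The main potential obstacle is the right-ideal step: without surjectivity of $\sigma$ one would only obtain closure under right multiplication by $\sigma^n(R)$. This is exactly why the subsection assumes from now on that $\sigma$ is an automorphism.
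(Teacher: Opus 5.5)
Your proof is correct and follows the paper's argument. You use left multiplication for the left ideal, right multiplication by $\sigma^{-n}(r)$ to obtain $ar$, and the component of the witness in degree $\tau\gamma^n$ (via \eqref{F65QNWT}) for gradedness; you also check closure under subtraction, which the paper leaves implicit.
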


\begin{proof}
Let $a\in L_n(J)$, witnessed by $f=at^n+\sum_{i<n}a_it^i\in J$, and let $r\in R$. Then $rf\in J$ has $t^n$-coefficient $ra$, so $ra\in L_n(J)$. By \eqref{O47UXRT}, $t^i\sigma^{-n}(r)\in\sigma^{i-n}(r)t^i+\sum_{m<i}Rt^m$ for every $i\ge0$. Hence $f\sigma^{-n}(r)\in J$ has $t$-degree at most $n$ and $t^n$-coefficient $ar$, so $ar\in L_n(J)$.

For gradedness, write $a=\sum_\tau a_\tau$. By \eqref{F65QNWT}, the component of $f$ of degree $\tau\gamma^n$ lies in $J$, has $t$-degree at most $n$, and has $t^n$-coefficient $a_\tau$. So $a_\tau\in L_n(J)$ for every $\tau$.
\end{proof}

\begin{lemma}\label{X41TVNZ}
For $n\ge1$ and $r\in R$,
\[
 t^nr=\sigma^n(r)t^n+D_n(r)t^{n-1}+\sum_{m<n-1}Rt^m,\quad D_n=\sum_{j=0}^{n-1}\sigma^j\delta\sigma^{n-1-j}.
\]
\end{lemma}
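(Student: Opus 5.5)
Proof by induction on $n$, directly from the defining relation \eqref{O47UXRT}. The claim is that $t^nr-\sigma^n(r)t^n-D_n(r)t^{n-1}$ lies in the left $R$-submodule $\sum_{m<n-1}Rt^m$. For $n=1$ we have $D_1=\delta$, and \eqref{O47UXRT} reads $tr=\sigma(r)t+\delta(r)$. The tail sum is empty, so the base case holds.

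For the inductive step, assume the formula for $n$. Write
\[
t^{n+1}r=t\bigl(\sigma^n(r)t^n+D_n(r)t^{n-1}+g\bigr)\quad\text{with } g\in\sum_{m<n-1}Rt^m.
\]
Apply \eqref{O47UXRT} to each coefficient. First,
\[
t\,\sigma^n(r)t^n=\sigma^{n+1}(r)t^{n+1}+\delta\sigma^n(r)t^n.
\]
Second,
\[
t\,D_n(r)t^{n-1}=\sigma D_n(r)t^n+\delta D_n(r)t^{n-1},
\]
and the last term lies in $Rt^{n-1}$. Third, $t\,g\in\sum_{m<n}Rt^m$, since $t(at^m)=\sigma(a)t^{m+1}+\delta(a)t^m$ raises the $t$-degree by at most one. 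Collecting terms, the coefficient of $t^n$ is $\sigma D_n(r)+\delta\sigma^n(r)$, and everything else lies in $\sum_{m<n}Rt^m$.

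It remains to check the identity $\sigma D_n+\delta\sigma^n=D_{n+1}$. We have
\[
\sigma D_n=\sum_{j=0}^{n-1}\sigma^{j+1}\delta\sigma^{n-1-j}=\sum_{j=1}^{n}\sigma^j\delta\sigma^{n-j}.
\]
Adding the missing $j=0$ term $\delta\sigma^n$ gives $\sum_{j=0}^{n}\sigma^j\delta\sigma^{(n+1)-1-j}=D_{n+1}$, as required.

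This is a routine calculation with no real obstacle. The only care needed is the index shift in the last display, and the bookkeeping that multiplying the tail on the left by $t$ raises degrees by at most one. No invertibility of $\sigma$ is used in this argument. Note also that when $\sigma$ is an automorphism, $D_n=\Delta_n\sigma^{n-1}$, which is presumably how this lemma connects to the maps $\Delta_n$ later.
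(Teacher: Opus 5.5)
Your proof is correct and follows the paper's argument essentially verbatim: induction on $n$, multiplying the formula for $n$ on the left by $t$, applying \eqref{O47UXRT} to each term, and checking $\delta\sigma^n+\sigma D_n=D_{n+1}$ by an index shift. Your remark that invertibility of $\sigma$ is not needed and that $D_n=\Delta_n\sigma^{n-1}$ is also accurate; the paper uses the latter identity in Lemma \ref{G15WRKP}(2).
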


\begin{proof}
For $n=1$ this is \eqref{O47UXRT}. Assume the formula for $n$ and multiply it on the left by $t$. By \eqref{O47UXRT},
\[
 t^{n+1}r=\sigma^{n+1}(r)t^{n+1}+\bigl(\delta\sigma^n(r)+\sigma D_n(r)\bigr)t^n+\sum_{m<n}Rt^m,
\]
since $t\,\bigl(D_n(r)t^{n-1}\bigr)=\sigma D_n(r)t^n+\delta D_n(r)t^{n-1}$ and the remaining terms have $t$-degree at most $n-1$. Finally $\delta\sigma^n+\sigma D_n=\sum_{j=0}^{n}\sigma^j\delta\sigma^{n-j}=D_{n+1}$.
\end{proof}

The next lemma records basic properties of homogeneous normal elements that are monic of positive $t$-degree.

\begin{lemma}\label{G15WRKP}
Let $n\ge1$ and let $f=t^n+at^{n-1}+\sum_{i<n-1}a_it^i$ be homogeneous. Then $f\in A_{\gamma^n}$ and $a\in R_\gamma$. Moreover:
\begin{enumerate}[label=\textup{(\arabic*)}]
\item $f$ is normal if and only if $fr=\sigma^n(r)f$ for all $r\in R$ and $tf-ft\in Rf$. In that case $tf=ft+(\sigma(a)-a)f$.
\item If $f$ is normal, then $\Delta_n=\delta_{-a}$. In particular $\Delta_n$ is inner.
\item If $f$ is normal, then $Af$ is a nonzero proper graded ideal of $A$.
\end{enumerate}
\end{lemma}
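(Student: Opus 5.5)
The plan is to settle the degree claims by reading off coefficients against \eqref{F65QNWT}, and then to base all three parts on one observation about $t$-degrees. Since $f$ is homogeneous and its $t^n$-coefficient $1$ lies in $R_e$, \eqref{F65QNWT} forces $f\in A_{e\gamma^n}=A_{\gamma^n}$. The $t^{n-1}$-coefficient must then lie in $R_{\gamma^n\gamma^{-(n-1)}}=R_\gamma$, so $a\in R_\gamma$.

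The observation is this. If $g=bt^m+\cdots$ with $b\neq0$, then $gf=bt^mf$ plus terms of $t$-degree $<m+n$, and $t^mf=t^{m+n}+(\text{lower})$ because $\sigma^m(1)=1$. Hence $\degt(gf)=m+n$ and $\lc(gf)=b$. In particular every nonzero element of $Af$ has $t$-degree $\ge n$, and if $h\in Af$ has $\degt h\le n$, then $h=bf$ with $b\in R$ equal to the $t^n$-coefficient of $h$.

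For (1), suppose $f$ is normal. For $r\in R$ we have $fr\in Af$. By Lemma \ref{X41TVNZ}, $fr$ has $t$-degree at most $n$ with $t^n$-coefficient $\sigma^n(r)$, so the observation gives $fr=\sigma^n(r)f$. Similarly $tf$ and $ft$ lie in $Af$, and the $t^{n+1}$-terms of $tf-ft$ cancel. The $t^n$-coefficient of $tf$ is $\sigma(a)+\delta(1)=\sigma(a)$, since $\delta(1)=0$, and that of $ft$ is $a$. Hence $tf-ft=(\sigma(a)-a)f\in Rf$.

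Conversely, assume $fr=\sigma^n(r)f$ for all $r$ and $tf-ft=bf$ with $b\in R$. Then $fR\subseteq Af$ and $ft=tf-bf\in Af$. Since $A$ is generated by $R$ and $t$, induction on the length of a word gives $fA\subseteq Af$. For the reverse inclusion, $\sigma$ is an automorphism, so $rf=f\sigma^{-n}(r)\in fA$. Also $tf=ft+bf=ft+f\sigma^{-n}(b)\in fA$, and the same induction gives $Af\subseteq fA$. This two-sidedness bookkeeping is the only step needing care; it is where bijectivity of $\sigma$ enters.

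For (2), compare $t^{n-1}$-coefficients in $fr=\sigma^n(r)f$. On the left, Lemma \ref{X41TVNZ} applied to $t^nr$ and the term $at^{n-1}r$ give $D_n(r)+a\sigma^{n-1}(r)$. On the right we get $\sigma^n(r)a$. Substituting $\sigma^{1-n}(r)$ for $r$ and using $D_n\sigma^{1-n}=\sum_j\sigma^j\delta\sigma^{-j}=\Delta_n$ yields
\[
\Delta_n(r)=\sigma(r)a-ar=\delta_{-a}(r).
\]

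For (3), $Af$ is a two-sided ideal because $f$ is normal. It is graded because $f$ is homogeneous, and it is nonzero since it contains $f$. It is proper because, by the observation, every nonzero element of $Af$ has $t$-degree $\ge n\ge1$, so $1\notin Af$.
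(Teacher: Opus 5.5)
Your proof is correct and follows the paper's argument essentially step for step. It uses the same degree bookkeeping via \eqref{F65QNWT}, the same additivity $\degt(gf)=\degt g+n$ for monic $f$, the same generator-by-generator two-sidedness check using $\sigma^{-n}$, and the identical $t^{n-1}$-coefficient comparison for (2). The only cosmetic difference is in (1): you read off $tf-ft=(\sigma(a)-a)f$ directly from the fact that $tf-ft\in Af$ has $t$-degree at most $n$, whereas the paper first writes $ft=hf$ with $h=t+h_0$ and identifies the coefficient $\sigma(a)-a$ afterwards.
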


\begin{proof}
The $t^n$-coefficient $1$ lies in $R_e$, so $f\in A_{\gamma^n}$ by \eqref{F65QNWT}, and then $a\in R_{\gamma^n\gamma^{1-n}}=R_\gamma$. Since $f$ is monic, $\lc(gf)=\lc(g)$ for every nonzero $g\in A$, hence
\begin{equation}\label{E69MXQL}
 \degt(gf)=\degt(g)+n\quad (g\neq0).
\end{equation}

(1) Suppose $f$ is normal and let $r\in R$. Write $fr=gf$ with $g\in A$. By \eqref{E69MXQL}, $\degt g\le0$, so $g\in R$, and comparing $t^n$-coefficients via Lemma \ref{X41TVNZ} gives $g=\sigma^n(r)$. Similarly $ft=hf$ with $\degt h=1$; comparing $t^{n+1}$-coefficients gives $h=t+h_0$ with $h_0\in R$, so $tf-ft=-h_0f\in Rf$. Conversely, assume both conditions and write $tf-ft=cf$ with $c\in R$. Then $fr=\sigma^n(r)f$ and $ft=(t-c)f$, so $fA\subseteq Af$ because $R$ and $t$ generate $A$. Since $\sigma^n$ is bijective, $rf=f\sigma^{-n}(r)$ and $tf=ft+f\sigma^{-n}(c)$, so $Af\subseteq fA$. Finally $tf=t^{n+1}+\sigma(a)t^n+\cdots$ and $ft=t^{n+1}+at^n+\cdots$, and comparing $t^n$-coefficients in $tf-ft=cf$ gives $c=\sigma(a)-a$.

(2) By (1), $fr=\sigma^n(r)f$. Compare $t^{n-1}$-coefficients. By Lemma \ref{X41TVNZ}, $t^nr$ contributes $D_n(r)$ and $at^{n-1}r$ contributes $a\sigma^{n-1}(r)$; the terms $a_it^ir$ with $i<n-1$ do not contribute. On the right, $\sigma^n(r)f$ contributes $\sigma^n(r)a$. Hence
\[
 D_n(r)+a\sigma^{n-1}(r)=\sigma^n(r)a.
\]
Substitute $r=\sigma^{1-n}(y)$. Since $D_n\sigma^{1-n}=\sum_{j=0}^{n-1}\sigma^j\delta\sigma^{-j}=\Delta_n$, this becomes
\[
 \Delta_n(y)=\sigma(y)a-ay=(-a)y-\sigma(y)(-a)=\delta_{-a}(y).
\]

(3) $Af=fA$ is an ideal, graded because $f$ is homogeneous, and nonzero. By \eqref{E69MXQL} its nonzero elements have $t$-degree at least $n\ge1$, so $1\notin Af$.
\end{proof}

\begin{theorem}\label{P73NXQT}
Assume that $R$ is gr-simple and $\sigma$ is an automorphism. Then $A$ is gr-simple if and only if $A$ contains no homogeneous normal element that is monic of positive $t$-degree.
\end{theorem}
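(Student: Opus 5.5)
One direction is immediate: if $A$ contains a homogeneous normal element $f$ that is monic of positive $t$-degree, then Lemma \ref{G15WRKP}(3) shows that $Af$ is a nonzero proper graded ideal, so $A$ is not gr-simple. The work is in the converse. Let $J$ be a nonzero proper graded ideal of $A$; we aim to produce a homogeneous normal monic element of positive $t$-degree inside $J$.

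First, $J\cap R$ is a graded ideal of $R$. It is proper, since $1\notin J$, so $J\cap R=0$ because $R$ is gr-simple. Let $n\ge0$ be the least $t$-degree of a nonzero element of $J$. By Lemma \ref{A87KQWR}, $L_n(J)$ is a graded ideal of $R$. It is nonzero: decompose a nonzero element of $J$ of $t$-degree $n$ into homogeneous components; each component lies in $J$ and has $t$-degree at most $n$, and minimality of $n$ forces some component to have $t$-degree exactly $n$. Hence $L_n(J)=R$. Since $J\cap R=0$, we get $n\ge1$. Moreover $1\in L_n(J)$ is homogeneous of degree $e$, and taking the component of degree $\gamma^n$ of a witness (as in the proof of Lemma \ref{A87KQWR}) gives a homogeneous $f\in J\cap A_{\gamma^n}$ with $f=t^n+\sum_{i<n}a_it^i$.

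Next we show $f$ is normal, using the criterion of Lemma \ref{G15WRKP}(1). For $r\in R$, consider $g=fr-\sigma^n(r)f\in J$. By Lemma \ref{X41TVNZ}, the $t^n$-coefficient of $fr$ is $\sigma^n(r)$, so $\degt g<n$. Minimality of $n$ then forces $g=0$, that is, $fr=\sigma^n(r)f$. Similarly, writing $a=a_{n-1}$, set $h=tf-ft-(\sigma(a)-a)f\in J$. Here $tf$ and $ft$ both begin with $t^{n+1}$, their $t^n$-coefficients are $\sigma(a)$ and $a$ respectively, and $f$ is monic. So $\degt h<n$, hence $h=0$ and $tf-ft\in Rf$. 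Lemma \ref{G15WRKP}(1) then says $f$ is normal, and it is homogeneous and monic of $t$-degree $n\ge1$, as required.

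The main obstacle is the minimality argument. It relies on $J$ containing no nonzero element of $t$-degree below $n$, which is built into the choice of $n$. The only delicate points are that $L_n(J)$ is nonzero, which needs $n$ to be attained by a homogeneous element, and that the element $f$ with leading coefficient $1$ can be taken homogeneous. Both are handled by the degree-$\gamma^n$ component argument above.
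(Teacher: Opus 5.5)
Your proof is correct and is essentially the paper's argument: take the minimal $t$-degree $n$, get $L_n(J)=R$ from gr-simplicity, take $f$ to be the degree-$\gamma^n$ component of a witness for $1\in L_n(J)$, kill $fr-\sigma^n(r)f$ and $tf-ft-(\sigma(a)-a)f$ by minimality, and conclude with Lemma \ref{G15WRKP}(1). The only differences are cosmetic: you assume $J$ proper to force $n\ge1$ rather than treating $n=0$ separately. Also, your homogeneous-component detour for $L_n(J)\neq0$ is unnecessary, since the leading coefficient of any nonzero element of $J$ of $t$-degree $n$ already lies in $L_n(J)$.
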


\begin{proof}
If such an element exists, $A$ is not gr-simple by Lemma \ref{G15WRKP}(3). Conversely, let $J\neq0$ be a graded ideal of $A$, and let $n$ be the least $t$-degree of a nonzero element of $J$. By Lemma \ref{A87KQWR}, $L_n(J)$ is a nonzero graded ideal of $R$, so $L_n(J)=R$. Choose $g\in J$ of $t$-degree $n$ with $t^n$-coefficient $1$. If $n=0$, then $g=1\in J$ and $J=A$. Let $n\ge1$. The component of $g$ of degree $\gamma^n$ lies in $J$ and has the form
\[
 f=t^n+at^{n-1}+\sum_{i<n-1}a_it^i.
\]
For $r\in R$, both $fr$ and $\sigma^n(r)f$ have $t^n$-coefficient $\sigma^n(r)$. So $fr-\sigma^n(r)f\in J$ has $t$-degree less than $n$, and it vanishes by minimality of $n$. Likewise, in $tf-ft-(\sigma(a)-a)f\in J$ the $t^{n+1}$-terms cancel and the $t^n$-coefficients agree, so this element vanishes. By Lemma \ref{G15WRKP}(1), $f$ is a homogeneous normal element, monic of positive $t$-degree. Hence, if no such element exists, $J=A$.
\end{proof}

For the trivial grading, Lam and Leroy characterize the simplicity of general Ore extensions \cite[Theorems 4.5 and 5.8]{LamLeroy1992}, and for $\sigma=\id$ the argument of Theorem \ref{P73NXQT} is that of \cite[Proposition 4.14]{OinertRichterSilvestrov2013}. See also \cite{Cauchon1979,CortesFerrero2004} for ideals of Ore extensions and \cite{LamLeungLeroyMatczuk1989,LeroyMatczuk1992,ChuangTsai2009} for semi-invariant polynomials. The corollaries below give conditions on $\delta$ that exclude the obstruction of Theorem \ref{P73NXQT}; for the trivial grading they are simplicity criteria over simple coefficient rings.

\begin{corollary}\label{U28ZVKM}
Assume that $R$ is gr-simple and $\sigma$ is an automorphism. If $\Delta_n$ is outer for every $n\ge1$, then $A$ is gr-simple.
\end{corollary}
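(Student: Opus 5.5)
The plan is to argue by contraposition, combining Theorem \ref{P73NXQT} with Lemma \ref{G15WRKP}(2). Suppose $A$ is not gr-simple. Since $R$ is gr-simple and $\sigma$ is an automorphism, Theorem \ref{P73NXQT} provides a homogeneous normal element $f\in A$ that is monic of some $t$-degree $n\ge1$. Write it as
\[
f=t^n+at^{n-1}+\sum_{i<n-1}a_it^i .
\]

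Lemma \ref{G15WRKP} then applies to $f$. It first gives $a\in R_\gamma$, and part (2) gives $\Delta_n=\delta_{-a}$. In particular $\Delta_n$ is inner.

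This contradicts the hypothesis that $\Delta_n$ is outer for every $n\ge1$. Hence no such $f$ exists, and Theorem \ref{P73NXQT} shows that $A$ is gr-simple.

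One terminology point needs checking. The paper uses ``outer'' for $\gamma$-compatible maps, so we should confirm that $\Delta_n$ is $\gamma$-compatible. By Lemma \ref{W28KQRP}(3), each $\sigma^j\delta\sigma^{-j}$ forms a $\gamma$-compatible pair with $\sigma$, so their sum $\Delta_n$ does as well. Moreover $-a\in R_\gamma$, so $\Delta_n$ is $\gamma$-inner. By Lemma \ref{W28KQRP}(2), $\gamma$-inner and inner coincide, so the contradiction holds under either reading of ``outer''.

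The substantive work was already done in Lemma \ref{G15WRKP}(2), namely the comparison of $t^{n-1}$-coefficients via Lemma \ref{X41TVNZ}. This corollary involves no real obstacle beyond that bookkeeping.
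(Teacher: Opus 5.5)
Your proposal is correct and is exactly the paper's argument: its proof is the one-line combination of Theorem \ref{P73NXQT} with Lemma \ref{G15WRKP}(2). Your extra check that $\Delta_n$ is $\gamma$-compatible, so that ``outer'' and ``$\gamma$-outer'' agree by Lemma \ref{W28KQRP}(2), is accurate; the paper leaves it implicit.
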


\begin{proof}
Combine Theorem \ref{P73NXQT} and Lemma \ref{G15WRKP}(2).
\end{proof}

The argument of Lemma \ref{G15WRKP}(2) is the one in \cite[proof of Lemma 4.8]{eisenschlos20133}, where the coefficient ring is a connected graded algebra generated in degree one, without zero divisors in degree one, whose only normal elements are the scalars.

Corollary \ref{U28ZVKM} never applies to $\sigma=\id$ in characteristic $p>0$, since then $\Delta_p=p\delta=0$. Theorem \ref{P73NXQT} has no such restriction, and for $\sigma=\id$ its argument works under a weaker hypothesis on $R$.

\begin{corollary}\label{I54TXWR}
Let $\sigma=\id$, let $\delta$ be compatible with $\deg t=\gamma$, and assume that $R$ is $\delta$-gr-simple. Then $R[t;\delta]$ is gr-simple if and only if it contains no homogeneous central element that is monic of positive $t$-degree.
\end{corollary}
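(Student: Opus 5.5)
The plan is to adapt the proof of Theorem \ref{P73NXQT} to the case $\sigma=\id$, replacing gr-simplicity of $R$ by $\delta$-gr-simplicity, and to identify normality with centrality for monic elements.

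\emph{Step 1: the easy direction.} Let $f$ be homogeneous, central and monic of $t$-degree $n\ge1$. Then $f$ is normal, since $Af=fA$. Lemma \ref{G15WRKP}(3) does not use that $\sigma$ is an automorphism beyond its statement, and here $\sigma=\id$. So it gives a nonzero proper graded ideal, and $A$ is not gr-simple.

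\emph{Step 2: the converse.} Let $J\neq0$ be a graded ideal, and let $n$ be the least $t$-degree of its nonzero elements. By Lemma \ref{A87KQWR}, $L_n(J)$ is a graded ideal of $R$. The new point is that $L_n(J)$ is $\delta$-stable when $n$ is minimal. Let $g=at^n+\sum_{i<n}a_it^i\in J$. Then
\[
tg-gt=\delta(a)t^n+\sum_{i<n}\delta(a_i)t^i\in J,
\]
because $\sigma=\id$ and $[t,a_it^i]=\delta(a_i)t^i$. So $\delta(a)\in L_n(J)$. Since $L_n(J)\neq0$ and $R$ is $\delta$-gr-simple, we get $L_n(J)=R$. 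As in Theorem \ref{P73NXQT}, take the homogeneous component $f\in J$ of degree $\gamma^n$ of an element with leading coefficient $1$. If $n=0$, then $J=A$.

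\emph{Step 3: $f$ is central.} Suppose $n\ge1$. For $r\in R$, the element $fr-rf\in J$ has $t$-degree less than $n$, since both terms have $t^n$-coefficient $r$. By minimality it vanishes. The element $tf-ft\in J$ has the form $\sum_{i<n}\delta(a_i)t^i$ plus the $t^n$-term $\delta(1)t^n=0$, so it also has $t$-degree less than $n$ and vanishes. Because $R$ and $t$ generate $A$, $f$ is central, homogeneous and monic of positive $t$-degree. Hence if no such element exists, $J=A$.

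\emph{Main obstacle.} The only real point is Step 2. In Theorem \ref{P73NXQT} gr-simplicity of $R$ was used directly, whereas here we must check that the commutator with $t$ preserves the $t$-degree when $\sigma=\id$. This is exactly what fails for general $\sigma$ (Remark \ref{Z32PXMT}), because of the term $(\sigma(a_n)-a_n)t^{n+1}$.
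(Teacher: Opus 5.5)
Your proof is correct and follows the paper's argument essentially step for step. As in the paper, you get $\delta$-stability of $L_n(J)$ from $tg-gt=\sum_i\delta(a_i)t^i$, then use minimality of $n$ to show that the monic homogeneous component $f$ commutes with $R$ and with $t$, and you handle the easy direction with Lemma \ref{G15WRKP}(3).
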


\begin{proof}
A central element is normal, so one direction is Lemma \ref{G15WRKP}(3). Conversely, let $J\neq0$ be a graded ideal and let $n$ be the least $t$-degree of a nonzero element of $J$. By Lemma \ref{A87KQWR}, $L_n(J)$ is a nonzero graded ideal of $R$. If $f=\sum_ia_it^i\in J$ with $\degt f\le n$, then $tf-ft=\sum_i\delta(a_i)t^i\in J$, so $\delta(L_n(J))\subseteq L_n(J)$. Hence $L_n(J)=R$. If $n=0$, then $1\in J$. Let $n\ge1$. As in the proof of Theorem \ref{P73NXQT}, $J$ contains a homogeneous element $f=t^n+\sum_{i<n}a_it^i$. For $r\in R$, the element $fr-rf\in J$ has $t$-degree less than $n$, and so does $tf-ft=\sum_{i<n}\delta(a_i)t^i$, because $\delta(1)=0$. Both vanish by minimality of $n$, so $f$ is central.
\end{proof}

Corollary \ref{I54TXWR} is the graded form of \cite[Lemma 4.13 and Proposition 4.14]{OinertRichterSilvestrov2013}. It needs no hypothesis on $\Gamma$ or on the homogeneous elements of $A$, in contrast with the centre criterion \cite[Theorem 1.10(3)]{AitMohamed2026GradedDifferential}. Compare also \cite[Theorem 4.15]{OinertRichterSilvestrov2013}: $R[t;\delta]$ is simple if and only if $R$ is $\delta$-simple and the centre of $R[t;\delta]$ is a field.

\begin{corollary}\label{Y96QNLP}
Assume that $R$ is gr-simple with $\operatorname{char}R=0$, that $\sigma$ is an automorphism, and that $\sigma\delta\sigma^{-1}-\delta$ is inner. Then $A$ is gr-simple if and only if $\delta$ is outer. This applies in particular when $\sigma\delta=\delta\sigma$.
\end{corollary}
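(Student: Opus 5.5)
One direction is Proposition \ref{T64XWRP}: if $A$ is gr-simple then $\delta$ is outer. For the converse, assume $\delta$ is outer and, aiming at a contradiction via Corollary \ref{U28ZVKM}, suppose $\Delta_n$ is inner for some $n\ge1$. The plan is to show that, under the hypothesis that $\sigma\delta\sigma^{-1}-\delta$ is inner, $\Delta_n$ differs from $n\delta$ by an inner $\sigma$-derivation. Then $n\delta$ is inner, and we use $\operatorname{char}R=0$ to divide by $n$.

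Write $\sigma\delta\sigma^{-1}=\delta+\delta_v$. By Lemma \ref{W28KQRP}(3), $\sigma\delta\sigma^{-1}$ is $\gamma$-compatible, so the difference is a $\gamma$-compatible inner $\sigma$-derivation. Lemma \ref{W28KQRP}(2) then lets us take $v\in R_\gamma$. Conjugating repeatedly and using $\sigma\delta_u\sigma^{-1}=\delta_{\sigma(u)}$ from Lemma \ref{W28KQRP}(3), induction on $j$ gives
\[
\sigma^j\delta\sigma^{-j}=\delta+\delta_{v+\sigma(v)+\cdots+\sigma^{j-1}(v)} .
\]
Here we use that $u\mapsto\delta_u$ is additive. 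Summing over $0\le j\le n-1$ gives $\Delta_n=n\delta+\delta_w$ for some $w\in R_\gamma$. If $\Delta_n=\delta_{u}$, then Lemma \ref{W28KQRP}(2) again lets us take $u\in R_\gamma$, and we get $n\delta=\delta_{u-w}$ with $u-w\in R_\gamma$.

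The main obstacle is dividing by $n$, since $\operatorname{char}R=0$ does not make $n$ invertible in $R$. We use gr-simplicity of $R$ for this. The set $nR$ is a graded ideal of $R$. It is nonzero because $n1_R\neq0$, so $nR=R$. Hence $n$ is invertible, and since $n$ is central its inverse $n^{-1}$ is central as well. We also need $n^{-1}\in R_e$; this follows by comparing components, because $n\in R_e$ is central and $n\cdot n^{-1}=1$. Since $\delta_x$ is additive in $x$ and $n^{-1}$ is central, $n^{-1}\delta_{x}=\delta_{n^{-1}x}$. Therefore $\delta=\delta_{n^{-1}(u-w)}$ is inner, which contradicts the assumption.

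So every $\Delta_n$ is outer, and Corollary \ref{U28ZVKM} gives that $A$ is gr-simple. Finally, if $\sigma\delta=\delta\sigma$, then $\sigma\delta\sigma^{-1}-\delta=0=\delta_0$ is inner, so the corollary applies.
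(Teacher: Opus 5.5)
Your proof is correct and follows the paper's argument essentially step for step. You reduce to $v\in R_\gamma$, prove $\Delta_n=n\delta+\delta_{w}$ by induction on the conjugates $\sigma^j\delta\sigma^{-j}$, use gr-simplicity to make $n1_R$ a central unit, and conclude with Corollary \ref{U28ZVKM}. The extra reductions to $u\in R_\gamma$ and $n^{-1}\in R_e$ are harmless but unnecessary, since outerness already excludes $\delta=\delta_y$ for every $y\in R$.
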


\begin{proof}
Necessity is Proposition \ref{T64XWRP}. For sufficiency, write $\sigma\delta\sigma^{-1}-\delta=\delta_v$. This map is compatible by Lemma \ref{W28KQRP}(3), so we may take $v\in R_\gamma$ by Lemma \ref{W28KQRP}(2).

We show by induction that $\sigma^j\delta\sigma^{-j}-\delta=\delta_{v_j}$ with $v_j\in R_\gamma$ for all $j\ge0$. For $j=0$ take $v_0=0$. If it holds for $j$, then, since $u\mapsto\delta_u$ is additive and $\sigma\delta_{v_j}\sigma^{-1}=\delta_{\sigma(v_j)}$ with $\sigma(v_j)\in R_\gamma$ by Lemma \ref{W28KQRP}(3),
\[
 \sigma^{j+1}\delta\sigma^{-j-1}-\delta
 =\sigma\bigl(\sigma^j\delta\sigma^{-j}-\delta\bigr)\sigma^{-1}+\bigl(\sigma\delta\sigma^{-1}-\delta\bigr)
 =\delta_{\sigma(v_j)+v}.
\]
Summing over $j=0,\ldots,n-1$ gives $\Delta_n=n\delta+\delta_{w_n}$ with $w_n=\sum_{j<n}v_j\in R_\gamma$.

Let $n\ge1$. The element $n1_R$ is central, homogeneous of degree $e$, and nonzero since $\operatorname{char}R=0$. So $(n1_R)R$ is a nonzero graded ideal, hence equal to $R$, and $n1_R$ is invertible with central inverse $n^{-1}$. If $\Delta_n=\delta_y$ for some $y\in R$, then $n\delta=\delta_{y-w_n}$, and multiplying by $n^{-1}$ gives $\delta=\delta_{n^{-1}(y-w_n)}$. This contradicts the outerness of $\delta$. Hence every $\Delta_n$ is outer, and Corollary \ref{U28ZVKM} applies.
\end{proof}

\begin{remark}\label{R58CTLM}
Let $R$ be prime, $\sigma$ an automorphism and $\delta$ an $X$-outer $\sigma$-derivation, that is, $\delta$ is not $\delta_b$ with $b$ in the symmetric Martindale ring of quotients of $R$ \cite{ChuangTsai2009}. Recall that $f\in R[t;\sigma,\delta]$ is \emph{semi-invariant} if $fR\subseteq Rf$. Chuang and Tsai \cite{ChuangTsai2009} show that, in characteristic zero, a semi-invariant polynomial of minimal positive degree, when one exists, has degree equal to the least integer $\nu\ge1$ for which
\[
\sigma^\nu\delta\sigma^{-\nu}-\delta
\]
is $X$-inner. No monic semi-invariant polynomial of degree one exists: if $(t+b)r=r'(t+b)$ for each $r$, then comparing $t$-coefficients gives $r'=\sigma(r)$, and comparing constant terms gives $\delta(r)=\sigma(r)b-br$, so $\delta$ would be $X$-inner. Thus, when $\nu=1$, no nonconstant semi-invariant polynomial exists. For the trivial grading and $R$ simple, Corollary \ref{Y96QNLP} is therefore close to their result. It does not require $R$ prime: $kC_2$ is gr-simple for its natural grading, but $(1+g)(1-g)=0$.
\end{remark}

\begin{remark}\label{R27NECS}
The innerness hypothesis in Corollary \ref{Y96QNLP} cannot simply be removed from the argument. Suppose that $\sigma^2=\id$, $\sigma\delta\sigma^{-1}=-\delta$, and $\delta^2(r)=rb-br$ for all $r\in R$, where $b\in R_{\gamma^2}$ satisfies $\sigma(b)=b$ and $\delta(b)=0$. Then $D_2=\delta\sigma+\sigma\delta=0$ in Lemma \ref{X41TVNZ}, so $t^2r=rt^2+\delta^2(r)$. The homogeneous element $f=t^2+b$ satisfies $fr=rf$ and $tf-ft=(\sigma(b)-b)t+\delta(b)=0$, so $A$ is not gr-simple by Lemma \ref{G15WRKP}. Yet, if $R$ is gr-simple of characteristic zero, then $2\cdot1_R$ is a central unit (see the proof of Corollary \ref{Y96QNLP}), so $\sigma\delta\sigma^{-1}-\delta=-2\delta$ is outer whenever $\delta$ is. Such an example needs $\sigma$ not to be conjugation by a unit of $R_\gamma$; otherwise Remark \ref{N47ZXVL}(3) gives a graded differential polynomial ring, where outerness suffices in characteristic zero \cite[Theorem 1.10(2)]{AitMohamed2026GradedDifferential}.
\end{remark}

We now show that the twist of Theorem \ref{K91QZWM} can be intrinsic in a gr-simple extension.

\begin{example}[A gr-simple extension with an intrinsic twist]\label{EKLEIN}
Let $\Gamma=\langle h,r\mid rhr^{-1}=h^{-1}\rangle$; every element of $\Gamma$ is uniquely $h^mr^n$ with $m,n\in\Z$. Let $R=\Q[h^{\pm1}][y;\theta]$ with $\theta(h)=h^{-1}$, graded by $\deg h=h$ and $\deg y=r$. Thus $\{h^my^i\}$ is a $\Q$-basis of $R$, $h^my^i\in R_{h^mr^i}$, and
\[
(h^my^i)(h^ny^j)=h^{m+(-1)^in}y^{i+j}.
\]
Put $\gamma=r^{-1}$ and define $\Q$-linear maps by
\[
\sigma(h^my^i)=h^{-m}y^i,\quad \delta(h^my^i)=i\,h^{-m}y^{i-1}.
\]
Both sides of $\sigma(ab)=\sigma(a)\sigma(b)$ equal $h^{-m-(-1)^in}y^{i+j}$ for $a=h^my^i$, $b=h^ny^j$, so $\sigma$ is an automorphism with $\sigma^2=\id$. Since $(-1)^{i-1}=-(-1)^i$,
\[
\sigma(a)\delta(b)+\delta(a)b=j\,h^{-m-(-1)^in}y^{i+j-1}+i\,h^{-m+(-1)^{i-1}n}y^{i+j-1}=\delta(ab),
\]
so $\delta$ is a $\sigma$-derivation. As $\gamma h^mr^i\gamma^{-1}=h^{-m}r^i$ and $\gamma h^mr^i=h^{-m}r^{i-1}$, the pair $(\sigma,\delta)$ is $\gamma$-compatible, and $A=R[t;\sigma,\delta]$ is graded with $\deg t=r^{-1}$. Its defining relations are
\[
yh=h^{-1}y,\quad th=h^{-1}t,\quad ty=yt+1.
\]
Here $\gamma$ does not centralize $h\in\Supp(R)$, so $\sigma$ is not graded, and $R_\gamma=0$.

The ring $A$ is gr-simple. Let $W$ be the subalgebra generated by $y$ and $t$. The elements $h^my^it^j$ form a $\Q$-basis of $A$, so $W\cong A_1(\Q)$, and $h^my^it^j\in A_{h^mr^{i-j}}$ gives $A_{h^mr^n}=h^mW_n$ with $W_n=\operatorname{span}\{y^it^j:i-j=n\}$. Let $J\neq0$ be a graded ideal and $0\neq x\in J_{h^mr^n}$. Then $h^{-m}x\in J\cap W$, so $J\cap W$ is a nonzero ideal of the simple ring $A_1(\Q)$. Hence $1\in J$.

The degree of $t$ cannot be moved to $e$. By \eqref{F65QNWT}, $A_e=\bigoplus_n\Q\,y^nt^n$. Since $R$ is a domain and $\sigma$ is injective, the argument of Example \ref{L73VXTR} applies: an element $s\in A_e$ whose powers form a left $R$-basis has $t$-degree one, so $s=b\,yt+a$ with $a,b\in\Q$, and $t=r_1s+r_0$ forces $r_1by=1$ in $R$, which is impossible.

Note that $R$ is not gr-simple, since $Ry=yR$ is a proper graded ideal, so Theorem \ref{P73NXQT} does not apply.
\end{example}

\subsection{Group algebras}

The ring $kG$ is gr-simple, because every nonzero homogeneous element is a unit. By Proposition \ref{M18QZKN}(4), Corollary \ref{Y96QNLP} therefore applies to the family of Proposition \ref{M18QZKN}.

\begin{corollary}\label{O31XMKT}
Let $\operatorname{char}k=0$, and let $\gamma$, $\chi$, $\sigma$ and $c\in Z^1(G,k_\chi)$ be as in Proposition \ref{M18QZKN}. Then $kG[t;\sigma,\delta_c]$, graded by $\deg t=\gamma$, is gr-simple if and only if $[c]\neq0$ in $H^1(G,k_\chi)$.
\end{corollary}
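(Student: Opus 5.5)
The plan is to apply Corollary \ref{Y96QNLP} with $R=kG$ and then translate outerness of $\delta_c$ into cohomology via Proposition \ref{M18QZKN}.

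First, I check the hypotheses of Corollary \ref{Y96QNLP}.
\begin{itemize}
\item $kG$ is gr-simple, since every nonzero homogeneous element $\lambda\tau$ with $\lambda\in k^\times$ is a unit. So any nonzero graded ideal contains a unit and equals $kG$.
\item $\operatorname{char}kG=0$, because $k$ is a field of characteristic zero and $n1_{kG}=n\cdot e\neq0$.
\item $\sigma$ is an automorphism by Proposition \ref{M18QZKN}(1).
\item By Proposition \ref{M18QZKN}(2) the pair $(\sigma,\delta_c)$ is $\gamma$-compatible. The grading with $\deg t=\gamma$ is therefore the one of Theorem \ref{R39XPKM}.
\item By Proposition \ref{M18QZKN}(4), $\sigma\delta_c\sigma^{-1}-\delta_c=-\delta_{c(\gamma)\gamma}$ is inner.
\end{itemize}
Corollary \ref{Y96QNLP} then gives that $kG[t;\sigma,\delta_c]$ is gr-simple if and only if $\delta_c$ is outer.

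Second, I translate outerness. By Lemma \ref{W28KQRP}(2), $\delta_c$ is inner if and only if it is $\gamma$-inner. By Proposition \ref{M18QZKN}(3), it is $\gamma$-inner if and only if $c\in B^1(G,k_\chi)$. Hence $\delta_c$ is outer exactly when $[c]\neq0$ in $H^1(G,k_\chi)$, which completes the equivalence.

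There is essentially no obstacle, since the argument only chains earlier results. The one point needing care is that "inner" in Corollary \ref{Y96QNLP} means $\delta_u$ for an arbitrary $u\in R$, whereas Proposition \ref{M18QZKN}(3) concerns $u\in R_\gamma$. Lemma \ref{W28KQRP}(2) supplies exactly this reduction, because $(\sigma,\delta_c)$ is $\gamma$-compatible.
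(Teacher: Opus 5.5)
Your proposal is correct and follows the paper's own route: the paper also notes that $kG$ is gr-simple because nonzero homogeneous elements are units, and then combines Corollary \ref{Y96QNLP} with Proposition \ref{M18QZKN}(3),(4). You additionally make explicit the characteristic-zero check, the automorphism property of $\sigma$, and the reduction from inner to $\gamma$-inner via Lemma \ref{W28KQRP}(2), all of which the paper leaves implicit.
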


\begin{proof}
Combine Corollary \ref{Y96QNLP} with Proposition \ref{M18QZKN}(3),(4).
\end{proof}

The cohomological criterion can be tested on generators.

\begin{corollary}\label{CriteriumGenerators}
In the setting of Corollary \ref{O31XMKT} (so $\operatorname{char}k=0$), suppose that $G$ is generated by $s_1,\ldots,s_m$. Put $u_i=c(s_i)$ and $v_i=\chi(s_i)-1$. If $\chi=1$, then $kG[t;\sigma,\delta_c]$ is gr-simple if and only if some $u_i$ is nonzero. If $\chi\neq1$, it is gr-simple if and only if
\[
 u_iv_j-u_jv_i\neq0\quad\text{for some }i,j.
\]
\end{corollary}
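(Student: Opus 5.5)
By Corollary \ref{O31XMKT}, gr-simplicity is equivalent to $c\notin B^1(G,k_\chi)$. So the plan is to turn the question of whether $c$ is a coboundary into a finite check on the generators $s_1,\ldots,s_m$.

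The first step is to show that a cocycle is determined by its values on generators. If two cocycles $c,c'$ agree on every $s_i$, then the set $H=\{g\in G: c(g)=c'(g)\}$ contains $e$, because $c(e)=0=c'(e)$. It is closed under products by the cocycle identity. It is also closed under inverses, by the formula $c(\tau^{-1})=-\chi(\tau)^{-1}c(\tau)$ recorded before Proposition \ref{M18QZKN}. Hence $H=G$. Since $x(\chi-1)$ is itself a cocycle, we get: $c\in B^1$ if and only if there is an $x\in k$ with $u_i=xv_i$ for all $i$.

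Case $\chi=1$. Here every $v_i=0$, so $B^1=0$, and $c$ is a coboundary if and only if all $u_i=0$. Equivalently, $c\neq 0$ if and only if some $u_i\neq0$.

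Case $\chi\neq1$. Since the $s_i$ generate $G$ and $\chi$ is a homomorphism, some $\chi(s_j)\neq1$, that is, $v_j\neq0$. The condition that $u_i=xv_i$ for all $i$ and some $x$ says exactly that the vector $(u_i)_i$ is proportional to $(v_i)_i$. Because $(v_i)_i\neq0$, this holds if and only if all $2\times2$ minors $u_iv_j-u_jv_i$ vanish. Concretely, if all minors vanish, put $x=u_j/v_j$; then $u_i=u_jv_i/v_j=xv_i$ for every $i$. The converse is immediate.

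Negating the condition in each case gives the stated criterion. The only point that needs care is closure of $H$ under inverses in the first step, and it follows from the identity already noted before Proposition \ref{M18QZKN}.
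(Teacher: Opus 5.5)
Your proposal is correct and follows essentially the same route as the paper. Both reduce via Corollary \ref{O31XMKT} to whether $c$ is a coboundary, use the cocycle identity together with $c(\tau^{-1})=-\chi(\tau)^{-1}c(\tau)$ to see that a cocycle is determined by its values on the generators, and then test proportionality of $(u_i)$ and $(v_i)$ by $2\times2$ minors. You only spell out the subgroup argument and the choice $x=u_j/v_j$ more explicitly than the paper does.
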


\begin{proof}
By the cocycle identity and $c(s^{-1})=-\chi(s)^{-1}c(s)$, two cocycles that agree on the generators agree on $G$. So $[c]=0$ if and only if $u_i=xv_i$ for all $i$ and some $x\in k$. If $\chi=1$, all $v_i$ vanish, and this means $u_i=0$ for all $i$. If $\chi\neq1$, some $v_i$ is nonzero, and $(u_i)$ is a multiple of $(v_i)$ if and only if every minor $u_iv_j-u_jv_i$ vanishes. Now apply Corollary \ref{O31XMKT}.
\end{proof}

For a presentation $G=\langle s_1,\ldots,s_m\mid w_1,\ldots,w_r\rangle$, the admissible vectors $(u_i)$ are the solutions of linear equations. Extend the prescribed values to a cocycle on the free group by $c(s_i^{-1})=-\chi(s_i)^{-1}u_i$ and $c(g_1g_2)=c(g_1)+\chi(g_1)c(g_2)$. This cocycle descends to $G$ if and only if $c(w_j)=0$ for every relator. Indeed, $\chi(w_j)=1$, so $c(gw_jg^{-1})=\chi(g)c(w_j)$ for every word $g$; hence $c$ vanishes on the normal closure $N$ of the relators, and $c(gn)=c(g)$ for $n\in N$. Corollary \ref{CriteriumGenerators} then tests gr-simplicity on the solution space.

\begin{remark}\label{V47PQZN}
If $\chi(z)\neq1$ for some $z$ in the centre of $G$, then $H^1(G,k_\chi)=0$. Indeed, applying the cocycle identity to $z\tau=\tau z$ gives $c(z)+\chi(z)c(\tau)=c(\tau)+\chi(\tau)c(z)$, that is, $c(\tau)=x(\chi(\tau)-1)$ with $x=c(z)(\chi(z)-1)^{-1}$. So every cocycle is a coboundary. In particular, if $G$ is abelian and $\chi\neq1$, then $\delta_c$ is inner and no ring in the family is gr-simple (Proposition \ref{T64XWRP}). Gr-simple examples with $\chi\neq1$ require a nonabelian group on whose centre $\chi$ is trivial.
\end{remark}

\begin{example}\label{K82RTXW}
Let $G=F(a,b)$ be the free group on $a,b$, let $k=\Q$, fix $q\in\Q^\times$ with $q\neq1$, and take $\gamma=a$, $\chi(a)=q$, $\chi(b)=1$. A cocycle on a free group is determined by arbitrary values on the free generators; let $c$ be the cocycle with $c(a)=0$ and $c(b)=1$. From $tr=\sigma(r)t+\delta_c(r)$ we get
\[
 ta=\chi(a)\,aaa^{-1}t+c(a)\,aa=q\,at,\quad
 tb=\chi(b)\,aba^{-1}t+c(b)\,ab=aba^{-1}t+ab.
\]
So the ring is generated by $\Q F(a,b)$ and $t$ subject to these relations, with $\deg t=a$. Every coboundary $x(\chi-1)$ vanishes at $b$, so $[c]\neq0$, and the ring is gr-simple by Corollary \ref{O31XMKT}. More precisely, cocycles correspond to pairs $(c(a),c(b))\in\Q^2$ and coboundaries to the pairs $(x(q-1),0)$; hence $H^1(F(a,b),\Q_\chi)\cong\Q$ via $[c]\mapsto c(b)$, and the ring attached to $c$ is gr-simple exactly when $c(b)\neq0$. Since $c(\gamma)=c(a)=0$, Proposition \ref{M18QZKN}(4) gives $\sigma\delta_c=\delta_c\sigma$, the commuting case of Corollary \ref{Y96QNLP}. Here $\sigma(R_b)=R_{aba^{-1}}\neq R_b$, and since $a$ does not commute with $b$, no differential polynomial ring over $\Q F(a,b)$ admits a compatible grading with $\deg t=a$ (Remark \ref{N47ZXVL}(2)). The degree can nevertheless be moved to $e$ by Remark \ref{N47ZXVL}(3) with $g=a$.
\end{example}

\subsection{Group algebras as skew group rings}\label{G41AFFN}

For group algebras every $R_\gamma=k\gamma$ contains the unit $\gamma$, so by Remark \ref{N47ZXVL}(3) the degree of the Ore variable can be moved to $e$, and the extension becomes a skew group ring over a polynomial ring. The noncentral degree of $t$ in Example \ref{K82RTXW} reflects the choice of generator.

Keep the notation of Proposition \ref{M18QZKN} and put $A=kG[t;\sigma,\delta_c]$. For $\tau\in G$ let
\[
 \varphi_\tau(X)=\chi(\tau)X+c(\tau)\in k[X].
\]
Since $\varphi_\tau(\varphi_\rho(X))=\chi(\tau\rho)X+c(\tau)+\chi(\tau)c(\rho)$, the cocycle identity says exactly that $\varphi_{\tau\rho}=\varphi_\tau\circ\varphi_\rho$. Thus $\Phi\colon\tau\mapsto\varphi_\tau$ is a homomorphism from $G$ to the affine group $\mathrm{Aff}(1,k)=k\rtimes k^\times$, and every homomorphism $G\to\mathrm{Aff}(1,k)$ arises in this way from a unique pair $(\chi,c)$ with $c\in Z^1(G,k_\chi)$. Moreover, $c=x(\chi-1)$ if and only if $\varphi_\tau(-x)=-x$ for all $\tau$; hence $[c]=0$ if and only if $\Phi(G)$ has a fixed point in $k$. Let $G$ act on $k[X]$ by the $k$-algebra automorphisms $\theta_\tau(f)=f\circ\varphi_\tau^{-1}$, and let $k[X]\rtimes_\theta G=\bigoplus_{\tau}k[X]u_\tau$ be the corresponding skew group ring, so that $u_\tau f=\theta_\tau(f)u_\tau$ and $\deg u_\tau=\tau$.

\begin{proposition}\label{P61AFMD}
Put $s=\gamma^{-1}t\in A_e$. Then $s\tau=\tau\varphi_\tau(s)$ for $\tau\in G$, $A_e=k[s]$ is a polynomial ring in $s$, and $A_\tau=\tau k[s]$ for every $\tau\in G$. The map $f(X)u_\tau\mapsto f(s)\tau$ is an isomorphism of $G$-graded $k$-algebras $k[X]\rtimes_\theta G\to A$.
\end{proposition}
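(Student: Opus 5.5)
The plan is to reduce everything to Remark~\ref{N47ZXVL}(3), applied with the homogeneous unit $g=\gamma\in R_\gamma=k\gamma$. The commutation rule, the description of $A_e$ and $A_\tau$, and the isomorphism should then all follow from a single relation together with a basis argument.

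\emph{Step 1: the commutation rule.} Since $\gamma^{-1}\in R_{\gamma^{-1}}$ and $t\in A_\gamma$, the element $s=\gamma^{-1}t$ lies in $A_e$. For $\tau\in G$, the relation \eqref{O47UXRT} together with Proposition~\ref{M18QZKN} gives
\[
t\tau=\chi(\tau)\gamma\tau\gamma^{-1}t+c(\tau)\gamma\tau .
\]
Multiplying on the left by $\gamma^{-1}$ yields
\[
s\tau=\chi(\tau)\tau s+c(\tau)\tau=\tau\bigl(\chi(\tau)s+c(\tau)\bigr)=\tau\varphi_\tau(s),
\]
because the scalars are central.

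\emph{Step 2: the components $A_e$ and $A_\tau$.} By Remark~\ref{N47ZXVL}(3) with $g=\gamma$, the ring $A$ is the Ore extension $kG[s;\sigma',\delta']$, where $\sigma'(\tau)=\chi(\tau)\tau$ and $\delta'(\tau)=c(\tau)\tau$. In particular $\{s^n\}_{n\ge0}$ is a left $kG$-basis of $A$. Hence the elements $\tau s^n$, for $\tau\in G$ and $n\ge0$, form a $k$-basis of $A$. Since $\tau s^n\in A_\tau$ and $A=\bigoplus_\tau A_\tau$, each component $A_\tau$ is spanned by $\{\tau s^n\}_n$, so $A_\tau=\tau k[s]$. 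Taking $\tau=e$, the powers $s^n$ are $k$-linearly independent, so $A_e=k[s]$ is a polynomial ring in $s$.

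\emph{Step 3: the isomorphism.} The one real computation is to turn Step~1 into the skew group ring relation. Rewriting $s\tau=\tau\varphi_\tau(s)$ as $\tau^{-1}s\tau=\varphi_\tau(s)$ and replacing $\tau$ by $\tau^{-1}$ gives
\[
\tau s\tau^{-1}=\varphi_{\tau^{-1}}(s)=\varphi_\tau^{-1}(s),
\]
using that $\Phi$ is a homomorphism. Conjugation by $\tau$ is an algebra automorphism, so $\tau h(s)=h(\varphi_\tau^{-1}(s))\,\tau=\theta_\tau(h)(s)\,\tau$ for every $h\in k[X]$.

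With this in hand, the map $F\colon f(X)u_\tau\mapsto f(s)\tau$ respects products:
\[
F(fu_\tau)\,F(hu_\rho)=f(s)\,\tau h(s)\,\rho=f(s)\,\theta_\tau(h)(s)\,\tau\rho=F\bigl(f\theta_\tau(h)\,u_{\tau\rho}\bigr),
\]
where in the middle step we used $\tau h(s)=\theta_\tau(h)(s)\tau$. The map is also unital, so it is a $k$-algebra homomorphism. It carries the $k$-basis $\{X^nu_\tau\}$ of $k[X]\rtimes_\theta G$ to $\{s^n\tau\}$. Since $s^n\tau=\tau\,\varphi_\tau(s)^n$, the set $\{s^n\tau\}_n$ spans $\tau k[s]$ and is linearly independent, as right multiplication by the unit $\tau$ is injective. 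Therefore $F$ is bijective.

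Finally, $F$ maps $k[X]u_\tau$ onto $k[s]\tau=\tau k[s]=A_\tau$, so $F$ is graded. No step is a serious obstacle; the only care needed is getting the direction $\varphi_\tau$ versus $\varphi_\tau^{-1}$ right in Step~3, which is exactly what makes $\theta_\tau(f)=f\circ\varphi_\tau^{-1}$ the correct action.
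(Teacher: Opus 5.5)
Your proof is correct and follows essentially the same route as the paper: the same commutation computation for $s\tau$, the same conjugation formula $\tau f(s)\tau^{-1}=\theta_\tau(f)(s)$, and the same componentwise bijectivity check. The only difference is where the basis comes from. You take the left $kG$-basis $\{s^n\}$ from Remark~\ref{N47ZXVL}(3) and read off the components $A_\tau$ from the $k$-basis $\{\tau s^n\}$. The paper instead shows directly that $s^n=\lambda_n\gamma^{-n}t^n+(\text{lower terms})$ with $\lambda_n\in k^\times$, uses \eqref{F65QNWT} to get $A_e=k[s]$, and then obtains $A_\tau=\tau A_e$ because $\tau$ is a unit; this is the same triangularity argument in a different packaging.
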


\begin{proof}
Since $\chi(\tau)$ and $c(\tau)$ are central scalars,
\[
 s\tau=\gamma^{-1}\bigl(\chi(\tau)\gamma\tau\gamma^{-1}t+c(\tau)\gamma\tau\bigr)=\tau\bigl(\chi(\tau)s+c(\tau)\bigr)=\tau\varphi_\tau(s).
\]
By \eqref{O47UXRT}, $t\gamma^{-n}\in\chi(\gamma)^{-n}\gamma^{-n}t+R$, so by induction $s^n=\lambda_n\gamma^{-n}t^n+(\text{lower terms})$ with $\lambda_n\in k^\times$. Since $A_e=\bigoplus_nk\gamma^{-n}t^n$ by \eqref{F65QNWT} and $s^n\in A_e$, the powers $s^n$ form a $k$-basis of $A_e$. Since $\tau$ is a unit in $A_\tau$, $\tau A_e\subseteq A_\tau$ and $\tau^{-1}A_\tau\subseteq A_e$, so $A_\tau=\tau k[s]$. From $\tau^{-1}s\tau=\varphi_\tau(s)$ we get $\tau f(s)\tau^{-1}=f(\varphi_\tau^{-1}(s))=\theta_\tau(f)(s)$. Hence
\[
 f(s)\tau\cdot g(s)\rho=f(s)\,\theta_\tau(g)(s)\,\tau\rho,
\]
which is the image of $(fu_\tau)(gu_\rho)=f\theta_\tau(g)u_{\tau\rho}$. So the map is multiplicative, and it maps $k[X]u_\tau$ bijectively onto $k[s]\tau=\tau k[s]=A_\tau$.
\end{proof}

Skew group rings of group algebras over polynomial rings also appear in the study of iterated Hopf Ore extensions of group algebras \cite{HatipogluLomp2026}.

\begin{theorem}\label{T84ORBT}
Let $A=kG[t;\sigma,\delta_c]$ and $\Phi=(\chi,c)$ be as above, and let $\bar k$ be an algebraic closure of $k$.
\begin{enumerate}[label=\textup{(\arabic*)}]
\item The graded ideals of $A$ are the ideals $Af(s)$, where $f=0$ or $f\in k[X]$ is monic with $f\circ\varphi_\tau\in k^\times f$ for all $\tau\in G$. Distinct $f$ give distinct ideals.
\item $A$ is gr-simple if and only if $\Phi(G)$ has no finite orbit in $\bar k$, if and only if $[c]$ restricts to a nonzero class in $H^1(H,k_\chi)$ for every subgroup $H$ of finite index in $G$.
\item If $\operatorname{char}k=0$, then $A$ is gr-simple if and only if $[c]\neq0$ in $H^1(G,k_\chi)$.
\item $A$ is simple if and only if it is gr-simple and $\Phi$ is injective. In particular, if $A$ is simple, then $G$ is metabelian.
\end{enumerate}
\end{theorem}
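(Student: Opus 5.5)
The plan is to transport everything through the graded isomorphism $A\cong k[X]\rtimes_\theta G$ of Proposition \ref{P61AFMD}, identifying $s$ with $X$ and $\tau\in G$ with $u_\tau$.

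\textbf{Part (1).} In a skew group ring whose homogeneous components are $k[X]u_\tau$ with $u_\tau$ a homogeneous unit, a graded ideal $J$ is determined by its degree-$e$ component $I=J\cap A_e$. Indeed, $J_\tau=Iu_\tau$, since $J_\tau u_\tau^{-1}\subseteq J_e$ and $Iu_\tau\subseteq J_\tau$. Moreover, $I$ is a $\theta$-invariant ideal of $k[X]$, because $u_\tau I u_\tau^{-1}=\theta_\tau(I)$. Conversely, every $\theta$-invariant ideal $I$ gives the graded ideal $\bigoplus_\tau Iu_\tau=A\,I$. So graded ideals correspond bijectively to $G$-invariant ideals of $k[X]$. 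Every ideal of $k[X]$ is $(f)$ with $f=0$ or $f$ monic, and $f$ is uniquely determined. The substitution $\theta_\tau$ preserves degree, so $\theta_\tau(f)\in(f)$ means $f\circ\varphi_\tau^{-1}\in k^\times f$. Since this holds for all $\tau$, it is equivalent to $f\circ\varphi_\tau\in k^\times f$ for all $\tau$. This gives (1), with $Af(s)$ corresponding to $(f)$.

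\textbf{Part (2).} By (1), $A$ is gr-simple if and only if no monic $f$ of positive degree satisfies $f\circ\varphi_\tau\in k^\times f$ for all $\tau$.

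If such an $f$ exists, then $\varphi_\tau^{-1}$ maps the finite root set of $f$ in $\bar k$ into itself, so every root has a finite orbit.

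Conversely, let $\alpha\in\bar k$ have a finite orbit $O$. Since the maps $\varphi_\tau$ have coefficients in $k$, they commute with $\operatorname{Gal}$-conjugation. Hence the union $O'$ of all conjugates of points of $O$ is finite, $\Phi(G)$-stable and Galois-stable. (To avoid separability issues, I take the union of the root sets of the minimal polynomials of the points of $O$.) Then $f=\prod_{\beta\in O'}(X-\beta)$ lies in $k[X]$ and satisfies $f\circ\varphi_\tau=\chi(\tau)^{\deg f}f$.

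For the second equivalence, $\alpha$ has a finite orbit if and only if its stabilizer $H$ has finite index. A fixed point of $\Phi(H)$ in $\bar k$ already forces a fixed point in $k$. If $\chi(h)\neq1$ for some $h\in H$, the unique fixed point of $\varphi_h$ is $-c(h)/(\chi(h)-1)\in k$. If $\chi|_H=1$, the maps $\varphi_h$ are translations, so a fixed point means $c|_H=0$, and then every point is fixed. By the remark preceding Proposition \ref{P61AFMD}, applied to $H$, a fixed point in $k$ exists exactly when $[c|_H]=0$. Conversely, a fixed point in $k$ for a finite-index $H$ has a finite $G$-orbit.

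\textbf{Parts (3) and (4).} Part (3) is Corollary \ref{O31XMKT}; it can also be seen from (2) by averaging the fixed point over coset representatives in characteristic $0$.

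For (4), I invoke Öinert's theorem \cite{Oinert2009}: a skew group ring $S\rtimes G$ over a commutative ring $S$ is simple if and only if $S$ is $G$-simple and the action is faithful. By (1), $G$-simplicity of $k[X]$ is gr-simplicity of $A$. The automorphism $\theta_\tau$ is the identity exactly when $\varphi_\tau^{-1}(X)=X$, that is, when $\varphi_\tau=\id$, so faithfulness is injectivity of $\Phi$. If $A$ is simple, then $G\cong\Phi(G)\le\mathrm{Aff}(1,k)$, which is metabelian.

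The main points of care are the Galois descent in (2) and checking that Öinert's hypotheses (commutative coefficient ring, arbitrary group) match this setting exactly.
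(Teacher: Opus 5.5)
Your route matches the paper's in all four parts: graded ideals via $G$-invariant ideals of $k[s]$, root sets versus finite orbits, stabilizers and fixed points, and \"Oinert's theorem. However, two steps fail as written.

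\textbf{The polynomial in (2).} The product $\prod_{\beta\in O'}(X-\beta)$ need not lie in $k[X]$ when $k$ is imperfect. Suppose a point of $O$ has an inseparable minimal polynomial $m$. Then each root of $m$ occurs with multiplicity $p^e$ for some $e\ge1$. Dropping these multiplicities leaves coefficients that lie only in the purely inseparable closure of $k$. For instance, take $k=\mathbb F_p(w)$, $\Phi$ trivial and $O=\{w^{1/p}\}$; then your $f$ is $X-w^{1/p}$. Passing from Galois orbits to root sets of minimal polynomials does not help. Since the theorem is stated for every field, and the paper explicitly works in positive characteristic, this case must be handled.

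The repair is the paper's: take $f=\prod_{m\in M}m$, the product of the distinct minimal polynomials of the points of $O$. For $x\in O$, the polynomial $m_x\circ\varphi_\tau$ is irreducible over $k$ of degree $\deg m_x$ and vanishes at $\varphi_\tau^{-1}(x)\in O$. Hence it lies in $k^\times m_{\varphi_\tau^{-1}(x)}$. Since $\varphi_\tau^{-1}$ permutes $O$, the map $m_x\mapsto m_{\varphi_\tau^{-1}(x)}$ is a bijection of $M$, and so $f\circ\varphi_\tau\in k^\times f$.

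\textbf{\"Oinert's theorem in (4).} The hypotheses do match, but you have misstated the conclusion. For commutative $S$, the theorem says that $S\rtimes G$ is simple if and only if $S$ is $G$-simple and \emph{maximal commutative} in $S\rtimes G$. Faithfulness of the action is strictly weaker in general. Let $S_3$ permute the coordinates of $k^3$. The action is faithful and $k^3$ is $S_3$-simple. Yet with $\epsilon_1=(1,0,0)$, the element $\epsilon_1u_{(23)}$ centralizes $k^3$. Indeed $k^3\rtimes S_3$ is not simple: its natural representation on $k^3$ is a unital map to $M_3(k)$ from an $18$-dimensional algebra, so its kernel is a nonzero proper ideal.

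The missing step is the domain argument. An element $\sum_\tau f_\tau u_\tau$ commutes with $k[X]$ if and only if $f_\tau(\theta_\tau(g)-g)=0$ for all $\tau$ and $g$. Since $k[X]$ is a domain, this forces $f_\tau=0$ whenever $\theta_\tau\ne\id$. So here maximal commutativity is equivalent to injectivity of $\Phi$, and with this added your argument for (4) is complete.

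Parts (1) and (3), including your coset-averaging alternative for (3), are fine.
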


\begin{proof}
(1) Let $J$ be a graded ideal and $I=J\cap A_e$, an ideal of $A_e=k[s]$. For $\tau\in G$, $\tau I\subseteq J_\tau$ and $\tau^{-1}J_\tau\subseteq I$, so $J=\bigoplus_\tau\tau I$; moreover $\tau I\tau^{-1}\subseteq J\cap A_e=I$. Conversely, if $I$ is an ideal of $k[s]$ with $\tau I\tau^{-1}\subseteq I$ for all $\tau$, then $\bigoplus_\tau\tau I$ is an ideal of $A$, because $\rho g(s)\tau I=\rho\tau(\tau^{-1}g(s)\tau)I\subseteq\rho\tau I$ and $\tau I\rho g(s)=\tau\rho(\rho^{-1}I\rho)g(s)\subseteq\tau\rho I$. Thus $J\mapsto J\cap A_e$ is a bijection onto the ideals $I$ of $k[s]$ with $\tau I\tau^{-1}\subseteq I$ for all $\tau$. Write $I=f(s)k[s]$ with $f=0$ or $f$ monic. As $\tau f(s)\tau^{-1}=f(\varphi_\tau^{-1}(s))$ and $f\circ\varphi_\tau^{-1}$ has the same degree as $f$, the condition reads $f\circ\varphi_\tau^{-1}\in k^\times f$ for all $\tau$, equivalently $f\circ\varphi_\tau\in k^\times f$ for all $\tau$. Finally $\bigoplus_\tau\tau I=Af(s)$.

(2) By (1), $A$ is gr-simple if and only if every monic $f$ with $f\circ\varphi_\tau\in k^\times f$ for all $\tau$ equals $1$. Let $f$ be such a polynomial of positive degree, and let $Z\subseteq\bar k$ be its set of roots. If $z\in Z$, then $f\circ\varphi_\tau$ vanishes at $\varphi_\tau^{-1}(z)$, hence so does $f$. Thus $Z$ is a finite nonempty $\Phi(G)$-stable set, and its orbits are finite. Conversely, let $O\subseteq\bar k$ be a finite orbit, and let $M$ be the finite set of minimal polynomials over $k$ of the elements of $O$. For $x\in O$ and $\tau\in G$, $m_x\circ\varphi_\tau$ is irreducible of degree $\deg m_x$, because $\varphi_\tau$ is an affine bijection defined over $k$, and it vanishes at $\varphi_\tau^{-1}(x)\in O$; hence $m_x\circ\varphi_\tau\in k^\times m_{\varphi_\tau^{-1}(x)}$. The map $m_x\mapsto m_{\varphi_\tau^{-1}(x)}$ is well defined on $M$ and surjective, since $\varphi_\tau^{-1}$ permutes $O$; so it is a bijection of $M$. Hence $f=\prod_{m\in M}m$ is monic of positive degree with $f\circ\varphi_\tau\in k^\times f$ for all $\tau$.

For the second equivalence, a finite orbit exists if and only if some subgroup $H$ of finite index fixes a point $x\in\bar k$: take for $H$ the stabilizer of a point of the orbit; conversely the orbit of $x$ has at most $[G:H]$ elements. If $H$ fixes $x$, then $c(h)=x(1-\chi(h))$ for $h\in H$. If $\chi|_H\neq1$, choosing $h_0\in H$ with $\chi(h_0)\neq1$ gives $x=c(h_0)/(1-\chi(h_0))\in k$, so $c|_H=(-x)(\chi-1)$; if $\chi|_H=1$, then $c|_H=0$. In both cases the restriction of $[c]$ to $H$ is zero. Conversely, if $c|_H=y(\chi-1)$ with $y\in k$, then $H$ fixes $-y$.

(3) Let $O\subseteq\bar k$ be a finite orbit, and put $b=|O|^{-1}\sum_{x\in O}x$, which is defined because $\operatorname{char}k=0$. Each $\varphi_\tau$ is affine and permutes $O$, so $\varphi_\tau(b)=b$. The argument of (2) with $H=G$ gives $[c]=0$. Conversely, if $[c]=0$, then $\Phi(G)$ fixes a point of $k$. Now apply (2).

(4) By Proposition \ref{P61AFMD}, $A$ is the skew group ring $k[X]\rtimes_\theta G$ over the commutative ring $k[X]$. By \cite[Theorem 6.13]{Oinert2009}, $A$ is simple if and only if $k[X]$ is $G$-simple and maximal commutative in $A$. By the proof of (1), $k[X]$ is $G$-simple if and only if $A$ is gr-simple. An element $\sum_\tau f_\tau u_\tau$ commutes with every $g\in k[X]$ if and only if $f_\tau(\theta_\tau(g)-g)=0$ for all $\tau$ and $g$. As $k[X]$ is a domain, $k[X]$ is maximal commutative if and only if $\theta_\tau\neq\id$ for $\tau\neq e$, that is, if and only if $\Phi$ is injective. Then $G\cong\Phi(G)\subseteq\mathrm{Aff}(1,k)=k\rtimes k^\times$, which is metabelian.
\end{proof}

Part (3) recovers Corollary \ref{O31XMKT} without using Corollary \ref{Y96QNLP}. The cohomological criterion fails in positive characteristic.

\begin{example}\label{E35FTHR}
Let $k=\mathbb F_3$, $G=\mathrm{Aff}(1,\mathbb F_3)\cong S_3$, and let $\Phi$ be the inclusion; thus $\chi(\varphi)=a$ and $c(\varphi)=b$ for $\varphi(X)=aX+b$. No point of $\mathbb F_3$ is fixed by $G$, so $[c]\neq0$ in $H^1(G,k_\chi)$. However, $f=X^3-X$ satisfies $f(aX+b)=a^3X^3+b^3-aX-b=af(X)$ for $a,b\in\mathbb F_3$. By Theorem \ref{T84ORBT}(1), $A(s^3-s)$ is a nonzero proper graded ideal of $A$ for every choice of $\gamma\in G$. So Corollary \ref{O31XMKT} fails in characteristic $3$.
\end{example}

More generally, the cohomological criterion fails for every finite group with $H^1(G,k_\chi)\neq0$.

\begin{corollary}\label{C19FING}
If $G$ is finite, then $kG[t;\sigma,\delta_c]$ is not gr-simple. In particular, let $\operatorname{char}k=p>0$, $G=C_p=\langle g\rangle$, $\chi=1$ and $c(g^i)=i\cdot1_k$. Then $[c]\neq0$ in $H^1(G,k)$, but $A(s^p-s)$, with $s=\gamma^{-1}t$ as in Proposition \ref{P61AFMD}, is a nonzero proper graded ideal of $A$.
\end{corollary}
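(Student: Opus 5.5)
For the first claim we use Theorem \ref{T84ORBT}(2): it suffices to exhibit a finite orbit of $\Phi(G)$ in $\bar k$. If $G$ is finite, every orbit of $\Phi(G)$ acting on $\bar k$ by affine maps has at most $|G|$ elements. Take any point, say $0\in k$. Its orbit $\{c(\tau):\tau\in G\}$ is finite, so by Theorem \ref{T84ORBT}(2) the ring $A$ is not gr-simple. Concretely, the proof of Theorem \ref{T84ORBT}(2) then produces a monic $f$ of positive degree with $f\circ\varphi_\tau\in k^\times f$. For this finite orbit of points of $k$, one can take $f=\prod_{x\in O}(X-x)$, and $Af(s)$ is the nonzero proper graded ideal.

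For the particular case, we first check that $c$ is a well-defined cocycle. With $\chi=1$ we need $c\in\operatorname{Hom}(C_p,k)$. The map $g^i\mapsto i\cdot1_k$ is well defined because $p\cdot1_k=0$, and it is clearly additive. It is nonzero since $c(g)=1$. Coboundaries are $x(\chi-1)=0$, so $[c]\neq0$ in $H^1(G,k)$.

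Next, $\varphi_{g^i}(X)=X+i$. For $f=X^p-X$ we compute $f(X+i)=X^p+i^p-X-i=f(X)$, using Frobenius and $i^p=i$ in $\mathbb F_p\subseteq k$. So $f\circ\varphi_\tau=f$ for all $\tau$, and Theorem \ref{T84ORBT}(1) shows that $Af(s)=A(s^p-s)$ is a graded ideal. It is nonzero, and it is proper because distinct monic $f$ give distinct ideals; in particular it differs from the ideal for $f=1$, which is $A$. Alternatively, properness follows from $A_e\cap Af(s)=f(s)k[s]\not\ni1$, which is visible from the description $J=\bigoplus_\tau\tau I$.

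There is no real obstacle here. The only points to watch are that the orbit is finite (which is immediate) and that the cocycle is well defined modulo $p$.
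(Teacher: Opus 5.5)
Your proposal is correct and follows the paper's proof. For the general claim you use the bound of $|G|$ on orbit sizes together with Theorem \ref{T84ORBT}(2). For $C_p$ you use that $c$ is well defined because $p\cdot1_k=0$, that $B^1$ vanishes since $\chi=1$, and that $f=X^p-X$ satisfies $f\circ\varphi_\tau=f$, which you feed into Theorem \ref{T84ORBT}(1). Your explicit $f=\prod_{x\in O}(X-x)$ for the orbit of $0$, which lies in $k$, is a valid concrete shortcut to the construction in the proof of Theorem \ref{T84ORBT}(2).
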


\begin{proof}
Every $\Phi(G)$-orbit in $\bar k$ has at most $|G|$ elements; apply Theorem \ref{T84ORBT}(2). For $G=C_p$ and $\chi=1$, the map $c$ is a well-defined homomorphism because $p\cdot1_k=0$; moreover $B^1(G,k)=0$ and $c\neq0$, so $[c]\neq0$. Since $c(\tau)\in\mathbb F_p$, the polynomial $f=X^p-X$ satisfies $f\circ\varphi_\tau=f$ for all $\tau$, and Theorem \ref{T84ORBT}(1) applies.
\end{proof}

Here $\chi=1$, so $A$ is a differential polynomial ring by Remark \ref{N47ZXVL}(3). Thus the cohomological criterion already fails in the differential case.

\begin{remark}\label{R46ORBT}
Let $s=\gamma^{-1}t$, $\lambda\in k$, $G_\lambda=\{\tau\in G:\varphi_\tau(\lambda)=\lambda\}$ and $L_\lambda=A(s-\lambda)$. Then $L_\lambda$ is a two-sided ideal if and only if $\lambda$ is fixed by $\Phi(G)$. Indeed, by Remark \ref{N47ZXVL}(3) with $g=\gamma$, $A=kG[s;\sigma',\delta']$ with $\sigma'(\tau)=\chi(\tau)\tau$ and $\delta'(\tau)=c(\tau)\tau$, so
\[
 (s-\lambda)\tau=\sigma'(\tau)(s-\lambda)+(\varphi_\tau(\lambda)-\lambda)\tau\quad(\tau\in G).
\]
Let $D$ be the $k$-linear map with $D(\tau)=(\varphi_\tau(\lambda)-\lambda)\tau$. Since $s-\lambda$ is monic, $A=kG\oplus L_\lambda$. For $a=r+g(s-\lambda)$ with $r\in kG$, we have $L_\lambda a\subseteq L_\lambda$ if and only if $(s-\lambda)r\in L_\lambda$, that is, $D(r)\in kG\cap L_\lambda=0$, that is, $r\in kG_\lambda$. Hence the idealizer $\{a\in A:L_\lambda a\subseteq L_\lambda\}$ equals $kG_\lambda\oplus L_\lambda$. Hence $L_\lambda$ is two-sided if and only if its idealizer is $A$, that is, $G_\lambda=G$.

If $\operatorname{char}k=0$ and $A$ is not gr-simple, then $[c]=0$ by Theorem \ref{T84ORBT}(3), so $\Phi(G)$ fixes some $\lambda\in k$ and $L_\lambda$ is a nonzero proper graded ideal. In Example \ref{E35FTHR} this fails: every $\lambda\in\mathbb F_3$ has stabilizer $\{X\mapsto aX+\lambda(1-a):a=\pm1\}$ of order $2$, so no $L_\lambda$ is two-sided, whereas $s^3-s=\prod_{\lambda\in\mathbb F_3}(s-\lambda)$ generates a proper graded ideal. The obstruction is carried by the orbit $\mathbb F_3$ rather than by a point.
\end{remark}

\begin{example}\label{E72BSLM}
(1) In Example \ref{K82RTXW}, $\Phi(a)(X)=qX$ and $\Phi(b)(X)=X+1$. The orbit of any $x\in\bar k$ contains $x+\Z$, so the ring is gr-simple, as already shown. Since $F(a,b)$ is not solvable and $\mathrm{Aff}(1,\Q)$ is metabelian, $\Phi$ is not injective, and the ring is not simple by Theorem \ref{T84ORBT}(4).

(2) Let $G$ be the subgroup of $\mathrm{Aff}(1,\Q)$ generated by $a(X)=2X$ and $b(X)=X+1$, let $\Phi$ be the inclusion, and let $\gamma=a$. Then $\chi(a)=2$, $\chi(b)=1$, $c(a)=0$, $c(b)=1$, and the orbit of every $x\in\bar{\Q}$ contains $x+\Z$. Here $\sigma(b)=aba^{-1}=b^2$. By Theorem \ref{T84ORBT}(2),(4), the ring $\Q G[t;\sigma,\delta_c]$, with $ta=2at$, $tb=b^2t+ab$ and $\deg t=a$, is simple.
\end{example}

\section{Graded Poisson polynomial rings}\label{P55QSCL}

Throughout this section $k$ is a field and, except in Remark \ref{R19NOGO}, $\Gamma$ is abelian. In a commutative graded algebra, $P_\rho P_\tau\neq0$ forces $\rho\tau=\tau\rho$, and since $\{P_\rho,P_\tau\}=\{P_\tau,P_\rho\}\subseteq P_{\rho\tau}\cap P_{\tau\rho}$, the same holds for nonzero brackets. In particular, if $P$ is a domain, then $\Supp(P)$ generates an abelian subgroup. Without this assumption the support need not commute: $k[x,y]/(xy)$ is graded by the free group on $a,b$ with $\deg x=a$ and $\deg y=b$. In particular, the conjugation in \eqref{B58LPXA} has no Poisson counterpart.

Let $\alpha$ be a Poisson derivation of $P$ and $\delta$ a Poisson $\alpha$-derivation. By Oh's theorem \cite[Theorem 1.1]{Oh2006}, in the form of \cite[Theorem 2.1]{LaunoisLecoutre2016}, a Poisson bracket on the polynomial algebra $P[t]$ extends the bracket of $P$ and satisfies
\[
 \{t,a\}=\alpha(a)t+\delta(a)\quad(a\in P)
\]
if and only if $\alpha$ is a Poisson derivation and $\delta$ is a Poisson $\alpha$-derivation; such a bracket is unique. We follow the sign convention of \cite{LaunoisLecoutre2016} and write $B=P[t;\alpha,\delta]_{\mathrm p}$.

\begin{proposition}\label{P13GRDP}
Let $\gamma\in\Gamma$. The Poisson algebra $B$ admits a $\Gamma$-grading extending that of $P$ with $t\in B_\gamma$ if and only if $(\alpha,\delta)$ is $\gamma$-compatible. In that case the grading is unique, namely $B_\beta=\bigoplus_{n\ge0}P_{\beta\gamma^{-n}}t^n$.
\end{proposition}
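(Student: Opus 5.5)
The plan mirrors the proof of Theorem~\ref{R39XPKM}. Since $\Gamma$ is abelian, $\gamma$-compatibility of $(\alpha,\delta)$ means that $\alpha$ preserves degrees and $\delta$ shifts them by $\gamma$.

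For sufficiency, we define $B_\beta=\bigoplus_{n\ge0}P_{\beta\gamma^{-n}}t^n$. Because $B=P[t]$ is free over $P$ with basis $\{t^n\}$, this gives a vector space decomposition $B=\bigoplus_\beta B_\beta$. Multiplicativity of the commutative product is immediate: $P_\rho t^m\cdot P_\tau t^n\subseteq P_{\rho\tau}t^{m+n}$, which has degree $\rho\gamma^m\tau\gamma^n=(\rho\tau)\gamma^{m+n}$ since $\Gamma$ is abelian.

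The real work is the bracket. For $a\in P_\rho$ and $b\in P_\tau$, the Leibniz rule in each variable gives
\[
\{at^m,bt^n\}=\{a,b\}t^{m+n}+a\{t^m,b\}t^n+b\{a,t^n\}t^m+ab\{t^m,t^n\},
\]
and $\{t^m,t^n\}=0$. Again by Leibniz,
\[
\{t^m,b\}=mt^{m-1}\{t,b\}=m\bigl(\alpha(b)t^m+\delta(b)t^{m-1}\bigr).
\]
By compatibility, $\alpha(b)t^m$ has degree $\tau\gamma^m$ and $\delta(b)t^{m-1}$ has degree $\gamma\tau\gamma^{m-1}=\tau\gamma^m$. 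Hence $t^m$ acts homogeneously of degree $\gamma^m$, and every term of the expansion has degree $\rho\tau\gamma^{m+n}$. So $\{B_\beta,B_{\beta'}\}\subseteq B_{\beta\beta'}$.

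For necessity, assume such a grading exists and take $0\ne a\in P_\tau$. Then $\{t,a\}=\alpha(a)t+\delta(a)$ lies in $B_{\gamma\tau}$. Decompose $\alpha(a)=\sum_\rho s_\rho$ and $\delta(a)=\sum_\rho d_\rho$ into homogeneous components. The degree-$\beta$ component of $\{t,a\}$ is $s_{\beta\gamma^{-1}}t+d_\beta$. Since $1$ and $t$ are $P$-linearly independent, this component vanishing for $\beta\ne\gamma\tau$ forces $s_{\beta\gamma^{-1}}=0=d_\beta$. Therefore $\alpha(a)\in P_\tau$ and $\delta(a)\in P_{\gamma\tau}$.

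Uniqueness follows exactly as in Theorem~\ref{R39XPKM}. Any such grading $\{B'_\beta\}$ contains $P_\rho t^n$ in $B'_{\rho\gamma^n}$, so $B_\beta\subseteq B'_\beta$ for all $\beta$. Since both families decompose $B$ as a direct sum, they coincide.

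The only point needing care is the bracket computation. It is routine once one uses that a Poisson bracket is a biderivation, so the whole argument reduces to checking degrees on the generators $t$ and $P$.
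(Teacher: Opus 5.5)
Your proof is correct and follows essentially the paper's route. Necessity comes from comparing the coefficients of $t$ and $1$ in the homogeneous components of $\{t,a\}$, uniqueness is exactly as in Theorem~\ref{R39XPKM}, and sufficiency reduces the bracket to the generators $P$ and $t$ via the Leibniz rule. The only difference is cosmetic: you expand $\{at^m,bt^n\}$ explicitly using $\{t^m,b\}=m(\alpha(b)t^m+\delta(b)t^{m-1})$, whereas the paper shows that ``good'' pairs of homogeneous elements are closed under products in each entry.
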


\begin{proof}
If such a grading exists and $a\in P_\tau$, then $\{t,a\}=\alpha(a)t+\delta(a)\in B_{\gamma\tau}$. Comparing the coefficients of $t$ and $1$ in each homogeneous component, as in the proof of Theorem \ref{R39XPKM}, gives $\alpha(a)\in P_\tau$ and $\delta(a)\in P_{\gamma\tau}$. Uniqueness is proved as in Theorem \ref{R39XPKM}. Conversely, since $\Gamma$ is abelian, the formula defines a grading of the commutative algebra $P[t]$. Call a pair $(x,y)$ of homogeneous elements \emph{good} if $\{x,y\}\in B_{\deg x\deg y}$. Pairs of homogeneous elements of $P$ are good because $P$ is graded, the pairs $(t,a)$ and $(a,t)$ are good by compatibility and antisymmetry, and $(t,t)$ is good. If $(x,y)$ and $(x,z)$ are good, then so is $(x,yz)$, by the Leibniz rule and because $\Gamma$ is abelian; the same holds in the first entry. Every homogeneous element of $B$ is a sum of products of homogeneous elements of $P$ and copies of $t$, all of the same degree, so every pair of homogeneous elements is good.
\end{proof}

As in the associative case, innerness produces a Poisson normal element of $t$-degree one.

\begin{lemma}\label{L47PINN}
Let $(\alpha,\delta)$ be $\gamma$-compatible.
\begin{enumerate}[label=\textup{(\arabic*)}]
\item If $\delta=\delta^{\mathrm p}_u$ with $u=\sum_\rho u_\rho$, then $\delta=\delta^{\mathrm p}_{u_\gamma}$.
\item If $\delta=\delta^{\mathrm p}_u$ with $u\in P_\gamma$, then $t-u\in B_\gamma$ is Poisson normal and $B(t-u)$ is a nonzero proper graded Poisson ideal of $B$.
\end{enumerate}
\end{lemma}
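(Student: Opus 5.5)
For part (1) I would copy the proof of Lemma \ref{W28KQRP}(2), using the abelian grading. Let $a\in P_\tau$. Then $\{u_\rho,a\}\in P_{\rho\tau}$ and $u_\rho\alpha(a)\in P_{\rho\tau}$, because $\alpha$ preserves degrees by compatibility. Hence every term of $\delta^{\mathrm p}_u(a)=\sum_\rho\bigl(\{u_\rho,a\}-u_\rho\alpha(a)\bigr)$ with index $\rho$ lies in degree $\rho\tau$, and these degrees are distinct for distinct $\rho$. Since $\delta(a)\in P_{\gamma\tau}$, comparing components of degree $\gamma\tau$ gives $\delta(a)=\{u_\gamma,a\}-u_\gamma\alpha(a)=\delta^{\mathrm p}_{u_\gamma}(a)$. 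Additivity then extends this to all of $P$.

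For part (2), put $s=t-u$. It lies in $B_\gamma$ because $t\in B_\gamma$ and $u\in P_\gamma$. For $a\in P$, the defining relation gives
\[
\{s,a\}=\alpha(a)t+\delta(a)-\{u,a\}=\alpha(a)t-u\alpha(a)=\alpha(a)s,
\]
and by antisymmetry $\{a,s\}=-\alpha(a)s$. Next I would compute $\{s,s\}=0$ and $\{t,s\}=\{t,t\}-\{t,u\}=-\alpha(u)t-\delta(u)$. Since $\delta(u)=\delta^{\mathrm p}_u(u)=\{u,u\}-u\alpha(u)=-u\alpha(u)$, this simplifies to $\{t,s\}=-\alpha(u)(t-u)=-\alpha(u)s$. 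So $\{x,s\}\in Ps\subseteq Bs$ for each generator $x$ of $B$ (the elements of $P$ together with $t$). The set $\{b\in B:\{b,s\}\in Bs\}$ is closed under sums and, by the Leibniz rule $\{bc,s\}=b\{c,s\}+c\{b,s\}$, under products. It therefore equals $B$, and $s$ is Poisson normal.

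It follows that $Bs$ is a Poisson ideal, since $\{b,cs\}=c\{b,s\}+s\{b,c\}\in Bs$. It is graded because $s$ is homogeneous, and nonzero. For properness, $s$ is monic of $t$-degree one in the commutative polynomial ring $P[t]$, so every nonzero element of $Bs$ has $t$-degree at least $1$, and $1\notin Bs$. Alternatively, the evaluation $P[t]\to P$, $t\mapsto u$, kills $Bs$ and sends $1$ to $1$.

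The only step that needs care is the identity $\{t,s\}=-\alpha(u)s$. It uses the sign convention $\{t,a\}=\alpha(a)t+\delta(a)$ together with $\delta(u)=-u\alpha(u)$, so I would check these signs first; everything else is routine.
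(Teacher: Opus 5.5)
Your proposal is correct and follows essentially the same route as the paper. Part (1) compares components of degree $\gamma\tau$. Part (2) computes $\{t-u,a\}=\alpha(a)(t-u)$ and $\{t,t-u\}=-\alpha(u)(t-u)$ using $\delta(u)=-u\alpha(u)$, then extends to all of $B$ by the Leibniz rule and gets properness from the $t$-degree.
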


\begin{proof}
(1) For $a\in P_\tau$, the elements $\{u_\rho,a\}$ and $u_\rho\alpha(a)$ lie in $P_{\rho\tau}$, while $\delta(a)\in P_{\gamma\tau}$. Comparing components of degree $\gamma\tau$ gives the claim.

(2) For $a\in P$, $\{t-u,a\}=\alpha(a)t+\delta(a)-\{u,a\}=\alpha(a)(t-u)$. Moreover $\delta(u)=\{u,u\}-u\alpha(u)=-u\alpha(u)$, so $\{t,t-u\}=-\{t,u\}=-\alpha(u)t-\delta(u)=-\alpha(u)(t-u)$. Since $P$ and $t$ generate $B$, the Leibniz rule gives $\{B,t-u\}\subseteq(t-u)B$. Hence $B(t-u)$ is a Poisson ideal. It is graded because $t-u$ is homogeneous, and proper because its nonzero elements have positive $t$-degree.
\end{proof}

\begin{theorem}\label{T92PSMP}
Let $(\alpha,\delta)$ be $\gamma$-compatible and $B=P[t;\alpha,\delta]_{\mathrm p}$, graded as in Proposition \ref{P13GRDP}.
\begin{enumerate}[label=\textup{(\arabic*)}]
\item If $B$ is Poisson gr-simple, then $P$ is $(\alpha,\delta)$-Poisson gr-simple and $\delta$ is not $\alpha$-inner.
\item Suppose that $P$ is Poisson gr-simple. Then $B$ is Poisson gr-simple if and only if $B$ contains no homogeneous Poisson normal element $t^n+\sum_{i<n}b_it^i$ with $n\ge1$.
\item Suppose that $P$ is Poisson gr-simple and $\operatorname{char}k=0$. Then $B$ is Poisson gr-simple if and only if $\delta$ is not $\alpha$-inner.
\end{enumerate}
\end{theorem}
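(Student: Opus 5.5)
We mirror the associative argument of Section~\ref{H82TVLX}, replacing two-sided ideals by Poisson ideals and normality by Poisson normality. For (1), let $I$ be a graded Poisson ideal of $P$ with $\alpha(I)+\delta(I)\subseteq I$ and put $IB=\bigoplus_n It^n$. It is an ideal of the commutative algebra $B$. It is closed under brackets with $P$ and with $t$, since
\[
\{a,it^n\}=\{a,i\}t^n-n\,i\,t^{n-1}\bigl(\alpha(a)t+\delta(a)\bigr)
\]
and $\{t,it^n\}=\alpha(i)t^{n+1}+\delta(i)t^n$. By the Leibniz rule it is therefore a Poisson ideal, graded by Proposition \ref{P13GRDP}, with $IB\cap P=I$. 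If $\delta$ is $\alpha$-inner, Lemma \ref{L47PINN}(1) lets us take $u\in P_\gamma$, and Lemma \ref{L47PINN}(2) gives a proper graded Poisson ideal.

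For (2), one direction is immediate. If $f$ is homogeneous, Poisson normal and monic of positive $t$-degree, then $Bf$ is a Poisson ideal, since $\{b,gf\}=g\{b,f\}+f\{b,g\}$. It is graded and proper because the nonzero elements of $Bf$ have $t$-degree at least $n$. Conversely, let $J\neq0$ be a graded Poisson ideal and let $n$ be the least $t$-degree in $J$. Define $L_n(J)$ as before. It is an ideal of $P$, graded by the argument of Lemma \ref{A87KQWR}. It is also closed under $\{a,-\}$: for $g=ct^n+\cdots\in J$, the $t^n$-coefficient of $\{a,g\}$ is $\{a,c\}-nc\,\alpha(a)$, and $nc\,\alpha(a)\in L_n(J)$ because $L_n(J)$ is an ideal. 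Thus $L_n(J)$ is a nonzero graded Poisson ideal, so $L_n(J)=P$. Taking the component of degree $\gamma^n$, we obtain a homogeneous $f=t^n+\sum_{i<n}b_it^i\in J$. Now compare with multiples of $f$ that have the same leading term:
\[
\{a,f\}+n\,\alpha(a)f\quad\text{and}\quad \{t,f\}-\bigl(n\,\alpha\text{-term from }b_{n-1}\bigr)f .
\]
Concretely, $\{t,f\}$ has $t$-degree at most $n$. Its $t^n$-coefficient is $\alpha(1)=0$ plus contributions from $b_{n-1}$, namely $\alpha(b_{n-1})$, so $\{t,f\}-\alpha(b_{n-1})f\in J$ has $t$-degree less than $n$. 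By minimality of $n$, both differences vanish. Then $\{P,f\}\subseteq fB$ and $\{t,f\}\in fB$, and the Leibniz rule gives that $f$ is Poisson normal. If $n=0$, then $1\in J$.

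For (3), necessity is part (1). For sufficiency, suppose $f=t^n+bt^{n-1}+\cdots$ is homogeneous and Poisson normal with $n\ge1$. From the proof of (2), $\{a,f\}=-n\alpha(a)f$ for $a\in P$. Compare $t^{n-1}$-coefficients. On the left, $\{a,t^n\}=-nt^{n-1}(\alpha(a)t+\delta(a))$ contributes $-n\delta(a)$, and $\{a,bt^{n-1}\}$ contributes $\{a,b\}-(n-1)b\alpha(a)$. On the right the contribution is $-n\alpha(a)b$. Hence
\[
n\delta(a)=\{a,b\}+b\alpha(a)=-\bigl(\{b,a\}-b\alpha(a)\bigr)=\delta^{\mathrm p}_{-b}(a).
\]
Since $\operatorname{char}k=0$, dividing by $n$ gives $\delta=\delta^{\mathrm p}_{-b/n}$, so $\delta$ is $\alpha$-inner. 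This contradicts the hypothesis, and (2) applies.

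The main obstacle is this bookkeeping of the $\alpha$-terms and signs, both in the normalization by $\alpha(b_{n-1})f$ in (2) and in the sign convention for $\delta^{\mathrm p}_u$ in (3). No $\Delta_n$ appears here, because $\alpha$ enters only through the scalar multiple $n\alpha(a)$, which is absorbed.
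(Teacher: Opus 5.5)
Your proposal is correct and follows the paper's proof essentially step for step: $IB$ together with Lemma \ref{L47PINN} for (1), the leading-coefficient ideal and the two vanishing differences $\{a,f\}+n\alpha(a)f$ and $\{t,f\}-\alpha(b_{n-1})f$ for (2), and the $t^{n-1}$-coefficient comparison giving $\delta=\delta^{\mathrm p}_{-b/n}$ for (3). The one slip is minor: in $\{t,f\}=\sum_i(\alpha(b_i)t+\delta(b_i))t^i$ it is $\alpha(1)=0$ that kills the $t^{n+1}$-term, and $\delta(1)=0$ that leaves $\alpha(b_{n-1})$ as the $t^n$-coefficient.
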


\begin{proof}
For $a\in P$ and $n\ge0$,
\begin{equation}\label{Q38BRKT}
 \{a,t^n\}=-nt^{n-1}\bigl(\alpha(a)t+\delta(a)\bigr).
\end{equation}

(1) Let $I$ be a graded Poisson ideal of $P$ with $\alpha(I)+\delta(I)\subseteq I$. Then $IB=\bigoplus_nIt^n$ is a graded ideal of $B$ with $IB\cap P=I$. For $a\in P$ and $i\in I$, \eqref{Q38BRKT} gives $\{a,it^n\}=\{a,i\}t^n+i\{a,t^n\}\in IB$, and $\{t,it^n\}=(\alpha(i)t+\delta(i))t^n\in IB$. By the Leibniz rule $IB$ is a Poisson ideal, so $I\in\{0,P\}$. If $\delta$ is $\alpha$-inner, Lemma \ref{L47PINN} gives a nonzero proper graded Poisson ideal.

(2) If such an element $f$ exists, then $Bf$ is a nonzero proper graded Poisson ideal. Conversely, let $J\neq0$ be a graded Poisson ideal of $B$, let $n$ be the least $t$-degree of a nonzero element of $J$, and let $L\subseteq P$ be the set of $t^n$-coefficients of the elements of $J$ of $t$-degree at most $n$. As in Lemma \ref{A87KQWR}, $L$ is a graded ideal of $P$. If $g=ct^n+\cdots\in J$ and $a\in P$, then by \eqref{Q38BRKT} the $t^n$-coefficient of $\{a,g\}\in J$ is $\{a,c\}-nc\alpha(a)$; since $nc\alpha(a)\in L$, also $\{a,c\}\in L$. So $L$ is a nonzero graded Poisson ideal, and $L=P$. If $n=0$, then $1\in J$. Let $n\ge1$. The component of degree $\gamma^n$ of an element of $J$ with $t^n$-coefficient $1$ has the form $f=t^n+\sum_{i<n}b_it^i$ and lies in $J$. For $a\in P$, the element $\{a,f\}+n\alpha(a)f\in J$ has $t$-degree less than $n$, so it vanishes. The $t^n$-coefficient of $\{t,f\}=\sum_i(\alpha(b_i)t+\delta(b_i))t^i$ is $\alpha(b_{n-1})$, and its $t^{n+1}$-coefficient is $\alpha(1)=0$; so $\{t,f\}-\alpha(b_{n-1})f\in J$ vanishes as well. By the Leibniz rule $f$ is Poisson normal. Hence, if no such element exists, $J=B$.

(3) Necessity is part (1). Conversely, suppose that $B$ is not Poisson gr-simple. By (2) there is a homogeneous Poisson normal $f=t^n+bt^{n-1}+\sum_{i<n-1}b_it^i$ with $n\ge1$. For $a\in P$ write $\{a,f\}=hf$. Comparing $t$-degrees gives $h\in P$, and the $t^n$-coefficients give $h=-n\alpha(a)$. By \eqref{Q38BRKT}, the $t^{n-1}$-coefficient of $\{a,f\}$ is $-n\delta(a)+\{a,b\}-(n-1)b\alpha(a)$, while that of $-n\alpha(a)f$ is $-nb\alpha(a)$. Hence $n\delta(a)=\{a,b\}+b\alpha(a)$, that is, $\delta=\delta^{\mathrm p}_u$ with $u=-n^{-1}b$.
\end{proof}

For a commutative Noetherian $\mathbb C$-algebra $P$ with zero bracket and $\alpha=0$, Jordan \cite{Jordan2014Poisson} shows that the prime spectrum of $P[t;\delta]$ is homeomorphic to the Poisson prime spectrum of $P[t;\delta]_{\mathrm p}$.

\subsection{Semiclassical limits}

Let $S$ be a ring and $h\in S$ a central non-zero-divisor such that $S/hS$ is commutative, and consider the semiclassical limit $S/hS$ (see Section \ref{INTRO}). For Ore extensions in which $\sigma-\id$ and $\delta$ take values in $hR$, the semiclassical limit is a Poisson polynomial ring \cite[Proposition 4.1]{LaunoisLecoutre2016}, \cite{ChoOh2016}.

\begin{lemma}\label{L26SPEC}
Let $w\in S$, and let $\vartheta$ be an automorphism of $S$ such that $sw=w\vartheta(s)$ and $\vartheta(s)-s\in hS$ for all $s\in S$. Then $\bar w$ is Poisson normal in $S/hS$.
\end{lemma}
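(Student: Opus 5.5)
We have to show that for every $s\in S$ the bracket $\{\bar s,\bar w\}$ lies in $\bar w\,(S/hS)$. Recall that $\{\bar a,\bar b\}=\overline{h^{-1}(ab-ba)}$, where $h^{-1}(ab-ba)$ is the unique element $x\in S$ with $hx=ab-ba$. It exists because $S/hS$ is commutative, and it is unique because $h$ is a non-zero-divisor.

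The plan is a direct computation. Fix $s\in S$. Using $sw=w\vartheta(s)$ we get
\[
sw-ws=w\bigl(\vartheta(s)-s\bigr).
\]
By hypothesis $\vartheta(s)-s\in hS$, so we may write $\vartheta(s)-s=hd$ for some $d\in S$. Since $h$ is central,
\[
sw-ws=w\,h\,d=h\,(wd).
\]
By uniqueness of division by $h$, this gives $h^{-1}(sw-ws)=wd$. Hence $\{\bar s,\bar w\}=\overline{wd}=\bar w\,\bar d\in\bar w\,(S/hS)$. Because every element of $S/hS$ has the form $\bar s$, this shows $\{S/hS,\bar w\}\subseteq\bar w\,(S/hS)$. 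That is exactly the statement that $\bar w$ is Poisson normal.

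There is no real obstacle here. The only delicate point is to justify carefully that $h^{-1}(sw-ws)$ is well defined and equals $wd$. This uses two facts: $h$ is central, so $wh=hw$, and $h$ is a non-zero-divisor, which makes the element $d$ irrelevant up to the choice made. More precisely, if $hd=hd'$ then $d=d'$, so $d$ is in fact unique.

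The hypothesis that $\vartheta$ is an automorphism is not needed for this inclusion. It only makes $w$ normal in $S$, which is the intended use of the lemma. I would note this but not rely on it.
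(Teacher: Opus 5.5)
Your proposal is correct and is essentially the paper's proof: write $sw-ws=w(\vartheta(s)-s)$, pull the factor $h$ out using that it is central and a non-zero-divisor, and read off $\{\bar s,\bar w\}=\bar w\,\overline{h^{-1}(\vartheta(s)-s)}\in\bar w\,(S/hS)$. You are also right that the argument never uses that $\vartheta$ is an automorphism.
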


\begin{proof}
For $s\in S$, $sw-ws=w(\vartheta(s)-s)$, so $\{\bar s,\bar w\}=\bar w\,\overline{h^{-1}(\vartheta(s)-s)}\in\bar w\,(S/hS)$.
\end{proof}

Let $S=R[z;\sigma,\delta]$, and let $h\in R$ be central in $S$ and a non-zero-divisor, with $R/hR$ commutative, $(\sigma-\id)(R)\subseteq hR$ and $\delta(R)\subseteq hR$. Put $P=R/hR$ and
\[
 \alpha_1(\bar r)=\overline{h^{-1}(\sigma(r)-r)},\quad\delta_1(\bar r)=\overline{h^{-1}\delta(r)}.
\]

\begin{lemma}\label{L73SCLV}
\begin{enumerate}[label=\textup{(\arabic*)}]
\item $\alpha_1$ and $\delta_1$ are well defined, $S/hS$ is the polynomial ring $P[\bar z]$, and $\{\bar z,\bar r\}=\alpha_1(\bar r)\bar z+\delta_1(\bar r)$. Hence $S/hS=P[z;\alpha_1,\delta_1]_{\mathrm p}$.
\item For $u\in R$, we have $\delta_u(R)\subseteq hR$ and $\overline{h^{-1}\delta_u(r)}=\delta^{\mathrm p}_{\bar u}(\bar r)$, where $\delta^{\mathrm p}_{\bar u}$ is computed with $\alpha_1$. In particular, if $\delta=\delta_u$, then $\delta_1=\delta^{\mathrm p}_{\bar u}$ is $\alpha_1$-inner.
\end{enumerate}
\end{lemma}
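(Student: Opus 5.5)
For (1), well-definedness comes first. If $r\in hR$, say $r=hr'$, then $\sigma(r)-r=\sigma(h)\sigma(r')-hr'$. Since $h$ is central in $S$, the relation $zh=hz$ together with \eqref{O47UXRT} gives $\sigma(h)z+\delta(h)=hz$. Comparing coefficients gives $\sigma(h)=h$ and $\delta(h)=0$. Hence $\sigma(r)-r=h(\sigma(r')-r')\in h\cdot hR$, and $h^{-1}(\sigma(r)-r)=\sigma(r')-r'\in hR$ (here $h^{-1}x$ denotes the unique $y$ with $hy=x$, unique because $h$ is a non-zero-divisor). Similarly $\delta(hr')=\sigma(h)\delta(r')+\delta(h)r'=h\delta(r')$, so $h^{-1}\delta(r)=\delta(r')\in hR$. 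Thus $\alpha_1$ and $\delta_1$ are well defined and additive on $P$.

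Next, $hS=\bigoplus_n hRz^n$. Indeed $h$ is central, so $hS=Sh$, and $(\sum a_nz^n)h=\sum a_nhz^n=\sum ha_nz^n$. Therefore $S/hS=\bigoplus_n (R/hR)\bar z^n$ as a left $P$-module. Commutativity follows from $zr-rz=(\sigma(r)-r)z+\delta(r)\in hS$, together with the commutativity of $R/hR$. So $S/hS=P[\bar z]$ is a polynomial ring, and the commutator formula divided by $h$ gives $\{\bar z,\bar r\}=\alpha_1(\bar r)\bar z+\delta_1(\bar r)$. The semiclassical bracket restricts on $P$ to the semiclassical bracket of $R/hR$. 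By the uniqueness part of Oh's theorem (in the form of \cite[Theorem 2.1]{LaunoisLecoutre2016}), a Poisson bracket on $P[\bar z]$ extending that of $P$ and satisfying this relation forces $\alpha_1$ to be a Poisson derivation and $\delta_1$ a Poisson $\alpha_1$-derivation. Hence $S/hS=P[z;\alpha_1,\delta_1]_{\mathrm p}$.

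For (2), write $\delta_u(r)=ur-\sigma(r)u=(ur-ru)-(\sigma(r)-r)u$. We have $ur-ru\in hR$ because $R/hR$ is commutative, and $(\sigma(r)-r)u\in hR$ by hypothesis; so $\delta_u(R)\subseteq hR$. Dividing by $h$ and reducing gives
\[
\overline{h^{-1}\delta_u(r)}=\{\bar u,\bar r\}-\alpha_1(\bar r)\bar u=\delta^{\mathrm p}_{\bar u}(\bar r),
\]
using commutativity of $P$ for the order of $\alpha_1(\bar r)\bar u$. Here $h^{-1}$ is applied termwise, which is legitimate because both terms lie in $hR$ and $h$ is a non-zero-divisor. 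If $\delta=\delta_u$, then $\delta_1(\bar r)=\overline{h^{-1}\delta_u(r)}=\delta^{\mathrm p}_{\bar u}(\bar r)$, so $\delta_1$ is $\alpha_1$-inner.

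The main obstacle is the bookkeeping in well-definedness, namely showing $\sigma(h)=h$ and $\delta(h)=0$ from the centrality of $h$ in $S$. Once those hold, everything else is a direct reduction modulo $h$.
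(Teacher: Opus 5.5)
Your proposal is correct and follows the paper's proof essentially step for step: you get $\sigma(h)=h$ and $\delta(h)=0$ from centrality, then $hS=\bigoplus_n hRz^n$, commutativity and the bracket formula modulo $h$, Launois--Lecoutre's Theorem 2.1 to identify $S/hS$ with $P[z;\alpha_1,\delta_1]_{\mathrm p}$, and the decomposition $\delta_u(r)=(ur-ru)-(\sigma(r)-r)u$ for part (2). The only quibble is terminology: the conclusion that $\alpha_1$ is a Poisson derivation and $\delta_1$ a Poisson $\alpha_1$-derivation comes from the ``only if'' direction of that theorem, not its uniqueness clause, which you then use to identify the bracket with Oh's.
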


\begin{proof}
(1) Since $h$ is central, $zh=hz$, that is, $\sigma(h)z+\delta(h)=hz$; hence $\sigma(h)=h$ and $\delta(h)=0$. For $r'\in R$ this gives $\sigma(hr')-hr'=h(\sigma(r')-r')\in h^2R$ and $\delta(hr')=h\delta(r')\in h^2R$, so both formulas are independent of the representative; division by $h$ is unique because $h$ is a non-zero-divisor. Since $h$ is central and lies in $R$, $hS=\bigoplus_nhRz^n$, so $S/hS=\bigoplus_nP\bar z^n$. This ring is commutative, because $\bar z\bar r-\bar r\bar z=\overline{(\sigma(r)-r)z+\delta(r)}=0$. Its semiclassical bracket extends that of $P$ and satisfies
\[
 \{\bar z,\bar r\}=\overline{h^{-1}(zr-rz)}=\overline{h^{-1}(\sigma(r)-r)}\,\bar z+\overline{h^{-1}\delta(r)}=\alpha_1(\bar r)\bar z+\delta_1(\bar r).
\]
By \cite[Theorem 2.1]{LaunoisLecoutre2016}, $\alpha_1$ is a Poisson derivation, $\delta_1$ is a Poisson $\alpha_1$-derivation, and $S/hS=P[z;\alpha_1,\delta_1]_{\mathrm p}$; compare \cite[Proposition 4.1]{LaunoisLecoutre2016} and \cite{ChoOh2016}.

(2) We have $\delta_u(r)=(ur-ru)-(\sigma(r)-r)u$, and both terms lie in $hR$ because $R/hR$ is commutative. Hence
\[
 \overline{h^{-1}\delta_u(r)}=\{\bar u,\bar r\}-\alpha_1(\bar r)\bar u=\delta^{\mathrm p}_{\bar u}(\bar r).
\]
If $\delta=\delta_u$, then $\delta_1(\bar r)=\overline{h^{-1}\delta_u(r)}=\delta^{\mathrm p}_{\bar u}(\bar r)$.
\end{proof}

\begin{example}\label{E58QWSL}
Let $F=\Q[q^{\pm1}]$, $h=q-1$ and $R=F[x]$, graded by $\deg x=1$ and $\deg q=0$. Let $\sigma(x)=qx$ and $\delta(f)=(f(qx)-f(x))/x$, so that $\delta(x^m)=(q^m-1)x^{m-1}$ for $m\ge1$, and put $S=R[z;\sigma,\delta]$, graded by $\deg z=-1$. Thus
\[
 zx=qxz+(q-1),
\]
and $h$ is central in $S$ because $\sigma(q)=q$ and $\delta(q)=0$. Specializing $q$ to $2$ gives the quantized Weyl algebra of Example \ref{C56NPQW}. In $S$, the element $w=1+xz$ satisfies $wx=qxw$ and $zw=qwz$. Hence $sw=w\vartheta(s)$ for the automorphism $\vartheta$ of $S$ with $\vartheta(x)=q^{-1}x$ and $\vartheta(z)=qz$, which preserves the defining relation. Since $\vartheta(x)-x$ and $\vartheta(z)-z$ lie in $hS$ and $\vartheta(ab)-ab=(\vartheta(a)-a)\vartheta(b)+a(\vartheta(b)-b)$, we get $\vartheta(s)-s\in hS$ for every $s\in S$.

The semiclassical limit $S/hS$ is $\Q[x,z]$ with $\{z,x\}=xz+1$, that is, $B=\Q[x][z;\alpha,\delta_1]_{\mathrm p}$ with $\alpha=x\,d/dx$ and $\delta_1=d/dx$, and $(\alpha,\delta_1)$ is $(-1)$-compatible. By Lemma \ref{L26SPEC}, $\bar w=1+xz$ is Poisson normal, so $B$ is not Poisson gr-simple. On the other hand, $\Q[x]$ with the zero bracket is $(\alpha,\delta_1)$-Poisson gr-simple, since no ideal $(x^m)$ with $m\ge1$ is $\delta_1$-stable, and $\delta_1$ is not $\alpha$-inner, because $\Q[x]_{-1}=0$ and $\delta_1\neq0$ (Lemma \ref{L47PINN}(1)). This is the Poisson counterpart of Proposition \ref{Q95LNVK}. As in Remark \ref{Z32PXMT}, $1+xz=x(z-u)$ with $u=-x^{-1}$, and $\delta_1$ is $\alpha$-inner over $\Q[x^{\pm1}]$.
\end{example}

\begin{remark}\label{R19NOGO}
The conjugation phenomenon of Theorem \ref{K91QZWM} does not survive in a semiclassical limit. Let $S=R[z;\sigma,\delta]$ be graded as in Theorem \ref{R39XPKM} with $\deg z=\gamma$, let $h\in R_e$ be central in $S$ and a non-zero-divisor, and suppose that $(\sigma-\id)(R)\subseteq hR$. If $\gamma\tau\gamma^{-1}\neq\tau$ and $r\in R_\tau$, then $\sigma(r)\in R_{\gamma\tau\gamma^{-1}}$, so $\sigma(r)-r\in hR$ has component $-r$ in degree $\tau$. Since $h$ is homogeneous, $hR$ is graded, and $r\in hR$. Hence $R_\tau\subseteq hR$, and the grading of $R/hR$ is supported on the centralizer of $\gamma$.
\end{remark}

\printbibliography

\end{document}